\newtheorem{theorem}{Theorem}
\newtheorem{assumption}{Assumption}
\newtheorem{lemma}{Lemma}
\newenvironment{proof}[1][Proof]{\begin{trivlist}
\item[\hskip \labelsep {\bfseries #1}]}{\end{trivlist}}
\begin{document}

\begin{frontmatter}

\title{Robustness of Nonlinear Predictor Feedback Laws to \\ Time- and State-Dependent Delay Perturbations}


\author{Nikolaos Bekiaris-Liberis and Miroslav Krstic}\ead{nbekiari@ucsd.edu and krstic@ucsd.edu}

\address{Department of Mechanical and Aerospace Engineering, University of California, San Diego, La Jolla, CA 92093-0411, USA}            

\maketitle
\thispagestyle{empty}
\pagestyle{empty}

\begin{abstract}
Much recent progress has been achieved for stabilization of linear and nonlinear systems with input delays that are long and dependent on either time or the plant state---provided the dependence is known. In this paper we consider the delay variations as unknown and study robustness of nominal constant-delay predictor feedbacks under delay variations that depend on time and the state. We show that when the delay perturbation and its rate have sufficiently small magnitude, the local asymptotic stability of the closed-loop system, under the nominal predictor-based design, is preserved. For the special case of linear systems, and under only time-varying delay perturbations, we prove robustness of global exponential stability of the predictor feedback when the delay perturbation and its rate are small in any one of four different metrics. We present two examples, one that is concerned with the control of a DC motor through a network  and one of a bilateral teleoperation between two robotic systems. 
\end{abstract}
\end{frontmatter}
\interdisplaylinepenalty=2500 

\section{Introduction}
Networked control systems are present in various engineering applications such as tele-robotics, remote surgery and automotive systems among others. One of the major reasons is that they are advantageous over traditional control systems in terms of flexibility, reliability, maintenance cost etc. \cite{hespanhasurvey}. However, often their performance can be severely degraded in the presence of delays induced by the network \cite{antsaklis}, \cite{teeln}. When the delay is constant and known, the predictor-based controller  compensates the network-induced delay \cite{KARAFYSAMPL}. Yet, the networked-induced delay might be subject to time-varying and state-dependent uncertainties, which, when they are not considered in the control design, not only can degrade the performance of the control system, but can also destabilize the network \cite{basar}. It is therefore crucial to quantify the robustness properties of the constant-delay predictor feedback in the presence of time- and state-dependent delay uncertainties.

Numerous methodologies exist, dealing with the stability or stabilization of nonlinear systems with input \cite{karafyllis finite}, \cite{mazenciss}, \cite{mazencfeed}, \cite{mazmal}, \cite{mazpra}, \cite{mazencnew}, \cite{michiels2}, \cite{niculescubook}, \cite{zhong1}, \cite{zhong2}, \cite{zhong3}, or state \cite{jankovic raz}, \cite{jankovic nonlinear2}, \cite{karafyllisretarded}, \cite{karnew1}, \cite{karnew2},  \cite{pepe1}, \cite{teel1} delays. Predictor-based techniques are developed for the compensation of long actuator delays in linear \cite{artstein}, \cite{bekiaris}, \cite{jankovic forwarding linear}, \cite{jankovic recursive}, \cite{krstic tools}, \cite{nihtila2} or nonlinear \cite{krsticbek}, \cite{bek}, \cite{karpre}, \cite{krstic feed} systems. Among them, \cite{krsticbek}, \cite{krstic varying}, \cite{nihtila2} are dealing with time-varying and \cite{bek} with state-dependent input delays. Predictor feedback has been also successful in designing stabilizing controllers for linear systems with uncertainties either on the delays \cite{bekiariadaptive}, \cite{delphinenew1} or on the plant parameters \cite{nihtila1}, \cite{yildiz} or on both \cite{delphine2}. 

Although, some of the first predictor-based designs for linear, unstable plants, with constant input delays go back to the 1980s \cite{artstein}, a Lyapunov construction has been unavailable until recently \cite{krstic tools}. In addition, a Lyapunov functional is provided in \cite{krstic feed}, which is employed to the stability analysis of the proposed control design. Yet, the robustness properties of the nominal, constant-delay, predictor-based feedback under time- and state-dependent delay perturbations remain unexplored.

We consider forward-complete nonlinear systems that are locally, exponentially stabilizable in the absence of the delay (by a possibly time-varying control law), for which we employ the predictor-based design. The predictor controller is designed assuming constant input delay and using only an estimation of the unmeasured (since the delay is not known) infinite-dimensional actuator state. We prove robustness  of the constant-delay predictor-based feedback, under simultaneous time-varying and state-dependent perturbations on the delay.  Specifically, using the nonlinear infinite-dimensional backstepping transformation, we construct a Lyapunov functional for the closed-loop system that is comprised of the plant, the predictor feedback and the observer for the actuator state. With the constructed Lyapunov functional, we prove that the closed-loop system remains locally asymptotically stable when the perturbation and its rate are small (Section \ref{sectionnonlinear}). 

We also deal with linear systems under time-varying delay perturbations. We show robustness of global exponential stability of the predictor feedback for the cases where the delay perturbation and its rate either have small magnitude, or their $\mathcal{L}_1$ norm is small, or they converge to zero as the time goes to infinity or, finally, have a small moving average for large times (Section \ref{sectionlinear}). Finally, we illustrate the robustness properties of the predictor feedback with two examples. The first one is an example of a DC motor which is controlled through a network. The network induces a delay which is comprised of a known constant part and an unknown time-varying perturbation on this nominal value. In addition, the delay is subject to a state-dependent perturbation that depends on the armature current. The second example is concerned with the bilateral teleoperation between two robotic systems through a network. The network induces a constant nominal delay which is subject to an unknown time-varying perturbation that has a small moving average after a long period of time (Section \ref{secex}).


\textit{Notation}: We use the common definition of class $\mathcal{K}$, $\mathcal{K}_{\infty}$ and $\mathcal{KL}$ functions from \cite{khalil}. For an $n$-vector, the norm $|\cdot|$ denotes the usual Euclidean norm. We say that a function $\zeta$ $:$ $\mathbb{R}_+\times(0,1)$ $\mapsto$ $\mathbb{R}_+$ belongs to class $\mathcal{KC}$ if it is of class $\mathcal{K}$ with respect to its first argument for each value of its second argument and continuous with respect to its second argument. It belongs to class $\mathcal{KC}_{\infty}$ if it is in $\mathcal{KC}$ and also in $\mathcal{K}_{\infty}$ with respect to its first argument. With $\bar{\zeta}$ we denote the inverse of the function $\zeta$ with respect to its first argument for each value of its second argument. 

\section{Robustness to time- and state-dependent delay perturbations for nonlinear systems}
\label{sectionnonlinear}
We consider nonlinear plants of the form
\begin{eqnarray}
\dot{X}(t)=f\left(X(t),U\left(t-\hat{D}-\delta\left(t,X(t)\right)\right)\right),\label{nonlper}
\end{eqnarray}
where $f:C^2\left(\mathbb{R}^n\times\mathbb{R};\mathbb{R}^n\right)$, satisfies $f(0,0)=0$, $\hat{D}>0$, under the nominal, constant-delay predictor feedback given by
\begin{eqnarray}
U(t)=\kappa\left(t+\hat{D},\hat{P}(t)\right),\label{connl}
\end{eqnarray}
where for all $t-\hat{D}\leq\theta\leq t$
\begin{eqnarray}
\hat{P}(\theta)=X(t)+\int_{t-\hat{D}}^{\theta}f\left(\hat{P}(s),U(s)\right)ds,\label{predes1}
\end{eqnarray}
is the estimate of the actual predictor state $P^*(\theta)$, defined for all $t-\hat{D}-\delta(t,X(t))\leq\theta\leq t$ as 
\begin{eqnarray}
P^*(\theta)&=&X(t)+\int_{t-\hat{D}-\delta\left(t,X(t)\right)}^{\theta}\frac{ f\left(P^*(s),U\left(s\right)\right)}{1-G(s)} ds\label{sysstrexnon}\\
G(s)&=&\delta_t\left(\sigma(s),P^*(s)\right)\nonumber\\
&&+\nabla\delta\left(\sigma(s),P^*(s)\right)f\left(P^*(s),U(s)\right),
\end{eqnarray}
where $\sigma$, the actual predicted time (which should be compared with the estimated predicted time $t+\hat{D}$) is 
\begin{eqnarray}
\sigma(\theta)=t+\int_{t-\hat{D}-\delta\left(t,X(t)\right)}^{\theta}\frac{ds}{1-G(s)}.\label{siinonnew}
\end{eqnarray}
An example of such a model together with the predictor-based controller is shown in Fig. \ref{figure1}.

\begin{figure}[t]
\centering
		\includegraphics[width=.475\textwidth]{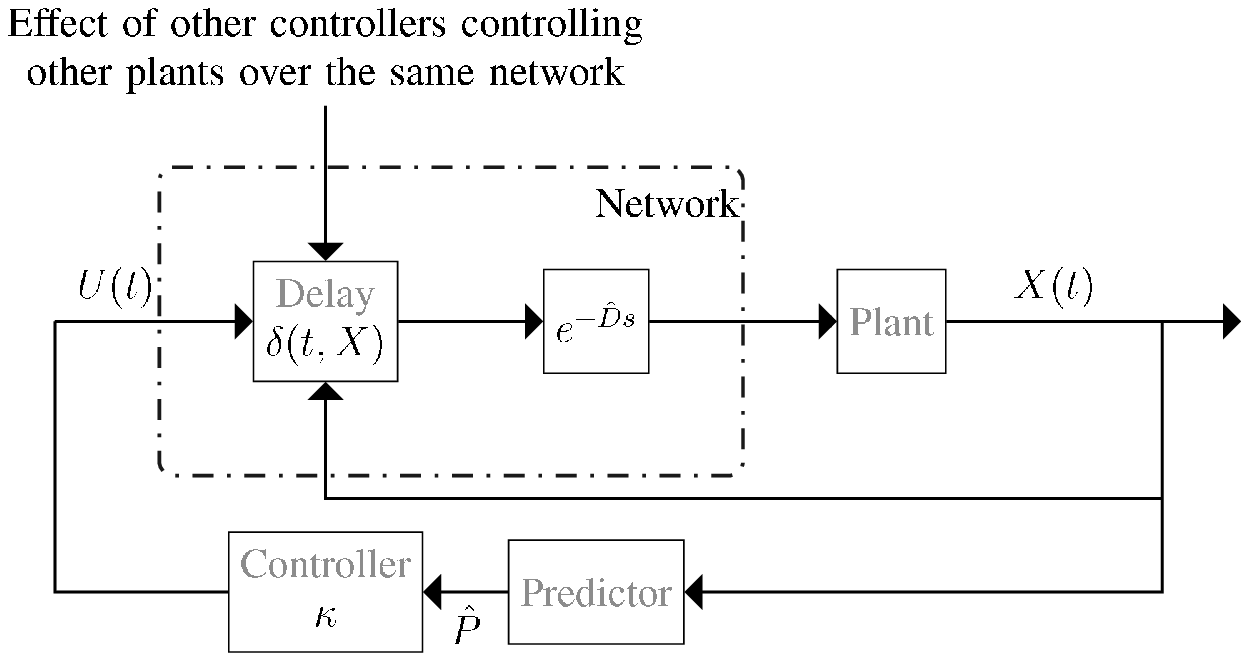}
\caption{Control over a network, with delay that varies with time (as a result of other users's activities) and may be state-dependent. The designer only knows a nominal, constant delay value $\hat D$. The delay fluctuation $\delta(t,X)$ is unknown.}
\label{figure1}
\end{figure}

Let us now make clear why $P^*$ is the actual predictor state of $X$. Define the actual delayed time as
\begin{eqnarray}
\phi(t)&=&t-\hat{D}-\delta(t,X(t))\label{defpnon},
\end{eqnarray}
and the actual predicted time as
\begin{eqnarray}
\sigma(t)&=&\phi^{-1}(t)\nonumber\\
&=&t+\hat{D}+\delta\left(\sigma(t),X(\sigma(t))\right).\label{invpnon} 
\end{eqnarray}
Then we show that the signal in (\ref{sysstrexnon}) satisfies $P^*(t)=X(\sigma(t))$. Differentiating (\ref{invpnon}) we get that 
\begin{eqnarray}
\dot{\sigma}(t)&=&1+\delta_t\left(\sigma(t),X(\sigma(t))\right)\dot{\sigma}(t)\nonumber\\
&&+\nabla\delta\left(\sigma(t),X(\sigma(t))\right)X'(\sigma(t))\dot{\sigma}(t),
\end{eqnarray}
and since $\sigma(t)=\phi^{-1}(t)$, using (\ref{nonlper}) we have
\begin{eqnarray}
\dot{\sigma}(t)&=&\frac{1}{1-F(t)}\label{siinon}\\
F(t)&=&\delta_t\left(\sigma(t),X(\sigma(t))\right)\nonumber\\
&&+\nabla\delta\left(\sigma(t),X(\sigma(t))\right)f\left(X(\sigma(t)),U(t)\right).
\end{eqnarray}
Define the change of variables $t=\sigma(\theta)$, where $\phi(t)\leq\theta\leq t$. With the help of (\ref{siinon}) re-write (\ref{nonlper}) in terms of $\theta$ as
\begin{eqnarray}
\frac{dP^*(\theta)}{d\theta}=\frac{ f\left(P^*(\theta),U\left(\theta\right)\right)}{1-G(\theta)},\quad \mbox{for all $\phi(t)\leq\theta\leq t$}\label{sysstrnon},
\end{eqnarray}
where we substitute $X(\sigma(\theta))$ with $P^*(\theta)$. Integrating (\ref{sysstrnon}) from $\phi(t)$ to $\theta$ we get (\ref{sysstrexnon}) by using the fact that $P^*(\phi(t))=X(\sigma(\phi(t)))=X(t)$. Noting that $\sigma(\phi(t))=t$ we also get (\ref{siinonnew}). A more detailed discussion about definition (\ref{sysstrexnon}) can be found in \cite{bek}.

The predictor state (\ref{predes1}) is the certainty equivalent predictor for system (\ref{nonlper}). This becomes clear by setting $\delta=0$ in (\ref{sysstrexnon}). Note that $\hat{P}(\theta)$ (or $P^*(\theta)$) should be viewed as the output of an operator, parametrized by $t$, acting on $P(s)$ and  $U(s)$, $t-\hat{D}\leq s\leq \theta$ (or $t-\hat{D}-\delta(t,X(t))\leq s\leq \theta$) in the same way that the solution $X(t)$ to an ODE can be viewed as the output of an operator, parametrized by $t_0$, acting on $X(s)$ and the input $U(s)$, $t_0\leq s\leq t$. However, $\hat{P}$ is given implicitly since the plant is nonlinear (for the same reason that the solution $X(t)$ to a nonlinear ODE is given implicitly). In the case of a linear plant $\dot{X}(t)=AX(t)+BU\left(t-\hat{D}\right)$, equation (\ref{predes1}) for the predictor state can be solved explicitly as $\hat{P}(\theta)=e^{A\left(\theta-t+\hat{D}\right)}X(t)+\int_{t-\hat{D}}^{\theta} e^{A\left(\theta-s\right)}BU(s)ds$, and hence, $\hat{P}(t)=e^{A\hat{D}}X(t)+\int_{t-\hat{D}}^t e^{A(t-\theta)}BU(\theta)d\theta$. This is the standard predictor (used for example in \cite{artstein}), which is obtained using the variations of constants formula for the linear ODE satisfied by the plant. An equivalent representation of the signal $\hat{P}(\theta)$ is 
\begin{eqnarray}
\hat{p}(x,t)=X(t)+\hat{D}\int_0^x f\left(\hat{p}(y,t),\hat{u}(y,t)\right),\label{defprnon}
\end{eqnarray}
for all $x\in[0,1]$, where $\hat{u}$, is the estimation of the actuator state $U\left(\theta\right)$, $t-\hat{D}-\delta\left(t,X(t)\right)\leq\theta\leq t$, which satisfies
\begin{eqnarray}
\hat{D}\hat{u}_t(x,t)&=&\hat{u}_x(x,t)\label{obs1non}\\
\hat{u}(1,t)&=&U(t),\label{obs2non}
\end{eqnarray}
that is,
\begin{eqnarray}
\hat{u}(x,t)=U(t+\hat{D}(x-1)),\quad \mbox{for all $x\in[0,1]$}.\label{est11}
\end{eqnarray}
With this definition, $\hat{p}(x,t)$ is the output of an operator, parametrized by $t$, that acts on $\hat{p}(y,t)$ and $\hat{u}(y,t)$, $y\in[0,x]$. With this representation $\hat{p}(1,t)=\hat{P}(t)$.

Note also that from relation (\ref{sysstrexnon}) we see that for $P^*$ to be well-defined the denominator in (\ref{sysstrexnon}) must satisfy the following condition for all $\theta\geq t_0-D\left(X(t_0)\right)$
\begin{eqnarray}
c&>& \delta_t\left(\sigma(\theta),P^*(\theta)\right)\nonumber\\
&&+\nabla\delta\left(\sigma(\theta),P^*(\theta)\right)f\left(P^*(\theta),U(\theta)\right)\label{prhx},
\end{eqnarray}
for $c\in(0,1]$, which is a condition on the perturbation $\delta$, the initial conditions and the solutions of the system. As it turns out later on, this condition is satisfied by appropriately restricting the perturbation $\delta$ and the initial conditions of the plant. We proceed by imposing the following assumptions on the delay-free plant.
\begin{assumption}
\label{ass1}
The plant $\dot{X}=f\left(X,\omega\right)$ is strongly forward complete, that is, there exist a smooth positive definite function $R$ and class $\mathcal{K}_\infty$ functions $\alpha_1$, $\alpha_2$ and $\alpha_3$ such that 
\begin{eqnarray}
\alpha_1\left(\left|X\right|\right)&\leq& R\left(X\right)\leq\alpha_2\left(\left|X\right|\right)\label{forb1}\\
\frac{\partial R\left(X\right)}{\partial X} f\left(X,\omega\right)&\leq&R\left(X\right)+\alpha_3\left(\left|\omega\right|\right)\label{forb2},
\end{eqnarray}
for all $X\in\mathbb{R}^n$ and for all $\omega\in\mathbb{R}$.
\end{assumption}


Assumption \ref{ass1} guarantees that for every initial condition and every measurable locally essentially bounded input signal, the corresponding solution of the system exists for all times. Forward-completeness is a natural requirement for nonlinear plants with input delay. In the absence of this assumption, i.e., when the plant exhibits a finite-escape time, the control signal might reach the plant ``too late". The difference with standard forward-completeness from \cite{angeli} lies in the fact that $R$ in Assumption \ref{ass1} is positive definite, in accordance to the fact that $f(0,0)=0$.
\begin{assumption}
\label{ass2}
There exist positive constants $\mu$, $r^*$, $b$, $\lambda^*$, a function $\hat{\alpha}$ which belongs to class $\mathcal{K}_{\infty}$ and a function $\kappa:C^3\left([t_0,\infty)\times\mathbb{R}^n;\mathbb{R}\right)$ satisfying for all $t\geq t_0$
\begin{eqnarray}
\left|\frac{\partial^{i+j}\kappa\left(t,\xi\right)}{\partial^i t\partial^j \xi}\right|\!&\leq\!\!&\left\{ \begin{array}{ll}\!\hat{\alpha}\left(\left|\xi\right|\right)\!,&\! \!\mbox{$0\!\leq\! i\!\leq3$,\! $j\!=\!0$}\\\! \mu+\hat{\alpha}\left(\left|\xi\right|\right)\!,&\!\!\mbox{$0\!\leq \!i\!\leq3$,\! $ j \!=\!1\ldots 3-i$}\end{array} \!\!\right\} \label{kap}\!\!,
\end{eqnarray}
such that for the plant $\dot{X}(t)=f\left(X(t),\kappa(t,X(t))\right)$ the following holds for all $X(t_0)\in D_{r^*}$,
\begin{eqnarray}
|X(t)|\leq b|X(t_0)| e^{-\lambda^* (t-t_0)},\quad \mbox{for all $t\geq t_0$},
\end{eqnarray} 
where $D_{r^*}=\left\{X\in\mathbb{R}^n | |X|\leq r^*\right\}$.
\end{assumption}
   
\begin{theorem}
\label{thmpernon}
Consider the closed-loop system consisting of the plant (\ref{nonlper}), control law (\ref{connl}), (\ref{predes1}) and observer (\ref{obs1non})--(\ref{obs2non}). Under Assumptions \ref{ass1} and \ref{ass2} there exist
positive constants $c_1$, $c^{**}$, class $\mathcal{K}_{\infty}$ functions $\hat{\mu}$, $\alpha^*$, a class $\mathcal{KC}_{\infty}$ function $\zeta$,
and a class $\mathcal{KL}$ function $\beta$ such that if the perturbation $\delta$ satisfies 
\begin{eqnarray}
\left|\delta\left(t,\xi\right)\right|+\left|\delta_t\left(t,\xi\right)\right|+\left|\nabla\delta\left(t,\xi\right)\right|&\leq&c_1+ \hat{\mu}\left(|\xi|\right)\label{89},
\end{eqnarray}
for all $(t,\xi)\in [t_0,\infty)\times\mathbb{R}^n$, then for all initial conditions which satisfy
\begin{eqnarray}
\Pi(t_0)< c^{**}\label{normth},
\end{eqnarray}
where
\setlength{\arraycolsep}{0pt}\begin{eqnarray}
\Pi(t)&=&\left|X(t)\right|+\int_{t-\hat{D}}^t\alpha^*\left(|U(\theta)|\right)d\theta+\int_{t-\hat{D}}^t\ddot{U}(\theta)^2d\theta\nonumber\\
&&+\int_{t-\hat{D}-\max\left\{0,\delta(t,X(t))\right\}}^t\dot{U}(\theta)^2d\theta,\label{defgadelnon}
\end{eqnarray}\setlength{\arraycolsep}{5pt}it holds that
\begin{eqnarray}
\Pi(t)\leq \beta\left(\zeta\left(\Pi(t_0),\min\left\{r^*,c,\hat{D}\right\}\right),t-t_0\right)\label{bouthm},
\end{eqnarray}
for all $t\geq t_0$ and some $0<c<1$.  
\end{theorem}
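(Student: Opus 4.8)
The plan is to recast the closed-loop system in the backstepping coordinates of the \emph{nominal} constant-delay predictor design, identify the delay perturbation as an additive disturbance acting on an otherwise exponentially stable target system, and then absorb that disturbance by a Lyapunov functional once $c_1$ and $\Pi(t_0)$ are small.

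First I would introduce the transport representation $\hat u(x,t)=U(t+\hat D(x-1))$, $x\in[0,1]$, together with $\hat p(x,t)$ from (\ref{defprnon}), and define the transformation $w(x,t)=\hat u(x,t)-\kappa(t+\hat Dx,\hat p(x,t))$, so that $w(1,t)=0$ by the control law (\ref{connl}). Differentiating in $x$ and $t$ and using (\ref{obs1non}), (\ref{defprnon}) and the chain rule, the only obstruction to the clean target dynamics $\hat Dw_t=w_x$ is the mismatch between the value that actually enters the plant, $U(\phi(t))$ with $\phi(t)=t-\hat D-\delta(t,X(t))$, and the value the predictor assumes, $\hat u(0,t)=U(t-\hat D)$. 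Writing $e(t)=U(\phi(t))-U(t-\hat D)=-\int_{\phi(t)}^{t-\hat D}\dot U(s)\,ds$, one obtains a target system $\hat Dw_t(x,t)=w_x(x,t)+\pi(x,t)$, $w(1,t)=0$, $\dot X(t)=f\big(X(t),\kappa(t,X(t))+w(0,t)+e(t)\big)$, where $\pi$ is linear in $e(t)$ with coefficients continuous in $(X,\hat u)$ that vanish at the origin; the bounds (\ref{kap}) on $\kappa$ up to third order make $\pi$, $w_x$ and $w_{xx}$ well defined and bounded. The Cauchy--Schwarz estimate $|e(t)|^2\le|\delta(t,X(t))|\int_{t-\hat D-\max\{0,\delta(t,X(t))\}}^{t}\dot U(s)^2\,ds$ is exactly what forces the last integral in (\ref{defgadelnon}); differentiating $e$ once more brings in $\ddot U$ and $\delta_t,\nabla\delta$, which accounts for the $\int\ddot U^2$ term.

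Next I would build the Lyapunov functional. For the $X$-subsystem I would invoke a converse Lyapunov function $S(t,X)$ for $\dot X=f(X,\kappa(t,X))$ whose existence follows from the local exponential stability in Assumption \ref{ass2}, satisfying $\partial_tS+\partial_XS\,f(X,\kappa(t,X))\le-c_S|X|^2$ and $|\partial_XS|\le c_S'|X|$ on $D_{r^*}$; for the infinite-dimensional part I would take weighted integrals $\int_0^1(1+x)\,\mathcal W(w(x,t))\,dx$, $b_1\int_0^1(1+x)w_x(x,t)^2dx$, $b_2\int_0^1(1+x)w_{xx}(x,t)^2dx$, with $\mathcal W$ chosen compatibly with (\ref{forb2}) and (\ref{kap}) so that $\int_0^1\mathcal W(w)\,dx$ is, via the locally invertible transformation, equivalent to $\int_{t-\hat D}^t\alpha^*(|U(\theta)|)d\theta$. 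Differentiating $V$ along the target system, the transport structure and integration by parts produce the negative terms $-\mathcal W(w(0,t))$, $-w_x(0,t)^2$, $-w_{xx}(0,t)^2$ and $-\mu_0V_{\mathrm{PDE}}$; the cross term $w(0,t)$ entering $\dot S$ is dominated by Young's inequality against $-\mathcal W(w(0,t))$, while every term containing $e$, $\pi$, $\delta$, $\delta_t$ or $\nabla\delta$ carries a factor $c_1+\hat\mu(|X|)$ times polynomials in the norms already present in $\Pi$, so that choosing $c_1$ small and restricting to $\Pi\le c^{**}$—on which $|X|\le r^*$ and $\hat\mu(|X|)$ is small—makes these terms a small fraction of the negative ones (the state-dependence of $\delta$ is precisely what forces this to be local). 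This yields $\dot V\le-\rho(V)$ for a class-$\mathcal K$ function $\rho$, the comparison lemma gives $V(t)\le\beta_0(V(t_0),t-t_0)$, and translating back through $\underline\alpha_V(\Pi)\le V\le\bar\alpha_V(\Pi)$ produces (\ref{bouthm}), the dependence of the comparison functions on $\min\{r^*,c,\hat D\}$ being collected into the $\mathcal{KC}_\infty$ function $\zeta$.

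The main obstacle I anticipate is not the Lyapunov bookkeeping but the well-posedness issue hidden in (\ref{prhx}): the actual predictor $P^*$ and predicted time $\sigma$ are defined only while $1-G$ stays positive, i.e.\ while $|\delta_t|+|\nabla\delta|\,|f(P^*,U)|<1$, which needs $|X|$, $|U|$ and hence $\Pi$ to have stayed small. I would therefore run the estimate as an a priori bound: on the maximal interval where $\Pi\le c^{**}$ the computation above forces $V$, hence $\Pi$, strictly below $c^{**}$, so by continuity that interval is all of $[t_0,\infty)$; simultaneously this keeps $G$ bounded away from $1$ and, with Assumption \ref{ass1}, rules out finite escape of $X$ and growth of the actuator state. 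Making $c^{**}$, $c_1$ and the weights $b_1,b_2$ mutually consistent so that this bootstrap closes, and verifying the equivalence of $V$ and $\Pi$ uniformly in the small parameters, is where the real work lies.
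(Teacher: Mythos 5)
Your overall architecture (nominal backstepping transformation, converse Lyapunov function for the $X$-subsystem from Assumption \ref{ass2}, weighted integrals of $w$, $w_x$, $w_{xx}$, smallness of $c_1$ plus a bootstrap on $\Pi$ to keep (\ref{prhx})--(\ref{prhxnew}) and local validity) matches the paper's strategy, and your identification of $e(t)=U(\phi(t))-U(t-\hat D)$ as the mismatch is exactly the quantity the paper calls $\tilde u(0,t)$. But there is a genuine gap in how you propose to absorb it. You bound $e(t)$ by Cauchy--Schwarz as $|e(t)|^2\le|\delta|\int_{t-\hat D-\max\{0,\delta\}}^{t}\dot U(s)^2ds$ and then assert that all perturbation terms are ``polynomials in the norms already present in $\Pi$,'' concluding $\dot V\le-\rho(V)$. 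This does not close: the values $\dot U(s)$ for $s\in[\phi(t),t-\hat D]$ lie \emph{outside} the nominal transport window $[t-\hat D,t]$, i.e.\ they correspond to $x<0$ in $\hat u_x(x,t)=\hat D\dot U(t+\hat D(x-1))$, so they are not dominated by your proposed $V(t)$, which contains only $S(t,X)$ and integrals of $w$, $w_x$, $w_{xx}$ over the nominal window. They are controlled only by \emph{past} values of your functional (a delayed, Krasovskii/Razumikhin-type term), so the pointwise inequality $\dot V\le-\rho(V)$ is not justified as written; equivalently, your $V$ is not equivalent to $\Pi$, whose last integral runs over the perturbed window.

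The paper's proof supplies precisely the missing device: it represents the \emph{actual} actuator state $u(x,t)=U(\phi(t+x(\sigma(t)-t)))$ as a transport PDE with speed $\pi(x,t)=\frac{1+x(\dot\sigma(t)-1)}{\sigma(t)-t}$ covering the whole perturbed window $[\phi(t),t]$, forms the observer error $\tilde u=u-\hat u$ (so that $\tilde u(0,t)=e(t)$), derives the PDEs and boundary conditions for $\tilde u$ and $\tilde u_x$ (Lemma \ref{lemma1non}), and includes $g_{11}\int_0^1e^{g_1x}|\tilde u|dx+g_6\int_0^1e^{g_2x}\tilde u_x^2dx$ in the Lyapunov functional (Lemma \ref{lyapl}). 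The boundary damping terms $-g_{11}\pi(0,t)|\tilde u(0,t)|$ and $-g_6\pi(0,t)\tilde u_x(0,t)^2$ are what absorb the forcing $r_1\tilde f$ (with $|\tilde f|\le\kappa_1(R)|\tilde u(0,t)|$) and the $\tilde u(0,t)$ entering the ODE, while the perturbation smallness enters through $|1-\hat D\pi(x,t)|$ and $\tilde u_x(1,t)=(1/\pi(1,t)-\hat D)r(1,t)$, which are small under (\ref{89}) with small $c_1$ and $\Pi<c^*$ (Lemma \ref{lemmanm}). To repair your argument you would either have to add these observer-error states (or an equivalent Krasovskii term over $[\phi(t),t]$) to $V$ and re-derive the norm equivalence with $\Pi$, or replace the pure Lyapunov step by a Razumikhin/Halanay-type estimate; as proposed, the key inequality does not follow. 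Your well-posedness bootstrap and the motivation for the $\int\ddot U^2$ term are otherwise in line with the paper.
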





We now introduce the backstepping transformation.
\begin{lemma}
\label{lemma1non}
Define the backstepping transformation 
\begin{eqnarray}
\hat{w}(x,t)=\hat{u}(x,t)-\kappa\left(t+\hat{D}x,\hat{p}(x,t)\right),\label{back1non}
\end{eqnarray}
together with its inverse,
\begin{eqnarray}
\hat{u}(x,t)=\hat{w}(x,t)+\kappa\left(t+\hat{D}x,\hat{\rho}(x,t)\right),\label{invback1non}
\end{eqnarray}
where $\hat{\rho}$ is given for all $x\in[0,1]$ by\footnote{An important observation at this point is that $\hat{p}$ in (\ref{defprnon}) and $\hat{\rho}$ in (\ref{invdefprnon}) are identical. To see this, observe that, through the backstepping transformation, both $\hat{p}$ and $\hat{\rho}$ satisfy the same ODEs in the spatial variable $x$, with the same initial condition $X$. However, $\hat{p}$ is expressed in terms of the original variables $(X,\hat{u})$ in the direct backstepping transformation, whereas $\hat{\rho}$, is expressed in terms of the transformed variables $(X,\hat{w})$ and is used in the inverse transformation.}
\begin{eqnarray}
\hat{\rho}(x,t)&=&X(t)+\hat{D}\int_0^x f\biggl(\hat{\rho}(y,t),\nonumber\\
&&\hat{w}(y,t)+\kappa\left(t+\hat{D}y,\hat{\rho}(y,t)\right)\biggr)dy.\label{invdefprnon}
\end{eqnarray}
System (\ref{nonlper}) together with the control law (\ref{connl}), (\ref{predes1}) can be written in the following form
\begin{eqnarray}
\dot{X}(t)&=&f\left(X(t),\kappa\left(t,X(t)\right)+\hat{w}(0,t)+\tilde{u}(0,t)\right)\label{tilw1}\\
\hat{D}\hat{w}_t(x,t)&=&\hat{w}_x(x,t)+r_1(x,t)\tilde{f}(t)\label{tilw2}\\
\hat{w}(1,t)&=&0,\label{tilw33}
\end{eqnarray}
where 
\begin{eqnarray}
\tilde{f}(t)\!\!=\!\!f\left(\hat{\rho}(0,t),\tilde{u}(0,t)+\hat{u}(0,t)\right)-f\left(\hat{\rho}(0,t),\hat{u}(0,t)\right)\!,\label{fti}
\end{eqnarray}
and $r_1$ is defined in Appendix \ref{secap1}. The observer error 
\begin{eqnarray}
\tilde{u}(x,t)=u(x,t)-\hat{u}(x,t),\label{obsernon}
\end{eqnarray}
 satisfies
\begin{eqnarray}
\tilde{u}_t(x,t)&=&\pi(x,t)\tilde{u}_x(x,t)-\left(1-\hat{D}\pi(x,t)\right)r(x,t)\label{obs3non1}\\
\tilde{u}(1,t)&=&0,\label{obs3non}
\end{eqnarray} 
and
\begin{eqnarray}
\tilde{u}_{xt}(x,t)&=&\pi(x,t)\tilde{u}_{xx}(x,t)+\pi_x(x,t)\tilde{u}_{x}(x,t)-\left(1\right.\nonumber\\
&&\left.-\hat{D}\pi(x,t)\right)r_x(x,t)+\hat{D}\pi_x(x,t)r(x,t)\label{tilu1}\\
\tilde{u}_x(1,t)&=&\left(\frac{1}{\pi(1,t)}-\hat{D}\right)r(1,t),\label{tilu2}
\end{eqnarray}
where 
\begin{eqnarray}
\pi(x,t)&=&\frac{1+x\left(\dot{\sigma}(t)-1\right)}{\sigma(t)-t}\label{defpinon},
\end{eqnarray}
the function $\sigma$ is defined in (\ref{invpnon}), and the function $r$ is defined in Appendix \ref{secap1}. Furthermore,
\begin{eqnarray}
\hat{D}\hat{w}_{xt}(x,t)&=&\hat{w}_{xx}(x,t)+r_2(x,t)\tilde{f}(t)\label{trr1non}\\
\hat{w}_x(1,t)&=&-r_1(1,t)\tilde{f}(t),\label{sys2ntr11non}
\end{eqnarray}
and
\begin{eqnarray}
\hat{D}\hat{w}_{xxt}(x,t)&=&\hat{w}_{xxx}(x,t)+r_3(x,t)\tilde{f}(t)\label{trr1non1}\\
\hat{w}_{xx}(1,t)&=&\!-r_2(1,t)\tilde{f}(t)+r_4(t)\tilde{f}(t)+\tilde{f}^T(t)r_5(t)\tilde{f}(t)\nonumber\\
&&\!-r_1(1,t)\tilde{f}_{\hat{\rho}}(t)f(\hat{\rho}(0,t),\tilde{u}(0,t)+\hat{u}(0,t))\nonumber\\
&&\!-r_1(1,t)\hat{D}r(0,t)\tilde{f}_{\hat{u}}+\hat{D}r_1(1,t)r(0,t)\left(1\right.\nonumber\\
&&\!\left.-\hat{D}\pi(0,t)\right)\frac{\partial f\left(\hat{\rho}(0,t),\tilde{u}(0,t)+\hat{u}(0,t)\right)}{\partial \hat{u}}\nonumber\\
&&\!-\hat{D}r_1(1,t)\pi(0,t)\tilde{u}_x(0,t)\nonumber\\
&&\!\times\frac{\partial f\left(\hat{\rho}(0,t),\tilde{u}(0,t)+\hat{u}(0,t)\right)}{\partial \hat{u}}\label{sys2ntr11nonxx},
\end{eqnarray}
where
\begin{eqnarray}
\tilde{f}_{\hat{\rho}}(t)&=&\frac{\partial f\left(\hat{\rho}(0,t),\tilde{u}(0,t)+\hat{u}(0,t)\right)}{\partial \hat{\rho}}\nonumber\\
&&-\frac{\partial f\left(\hat{\rho}(0,t),\hat{u}(0,t)\right)}{\partial \hat{\rho}}\label{fzz}\\
\tilde{f}_{\hat{u}}(t)&=&\frac{\partial f\left(\hat{\rho}(0,t),\tilde{u}(0,t)+\hat{u}(0,t)\right)}{\partial \hat{u}}\nonumber\\
&&-\frac{\partial f\left(\hat{\rho}(0,t),\hat{u}(0,t)\right)}{\partial \hat{u}},\label{fuu}
\end{eqnarray}
and $r_2$, $r_3$, $r_4$ and $r_5$ are defined in Appendix \ref{secap1}.
\end{lemma}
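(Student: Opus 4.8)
The plan is to verify every assertion by directly differentiating the defining formulas, using the transport representations (\ref{est11}) and (\ref{obs1non})--(\ref{obs2non}) of $\hat u$, the integral representation (\ref{defprnon}) of $\hat p$, the transformation (\ref{back1non}), and the identities (\ref{siinon})--(\ref{invpnon}); throughout one restricts to the time interval on which (\ref{prhx}) holds, so that $\sigma$ is well defined and $\pi$ in (\ref{defpinon}) is finite. First I would record two facts about $\hat p$. Differentiating (\ref{defprnon}) in $x$ gives at once $\hat p_x(x,t)=\hat D f(\hat p(x,t),\hat u(x,t))$ with $\hat p(0,t)=X(t)$. Setting $g(x,t):=\hat p_x(x,t)-\hat D\hat p_t(x,t)$ and differentiating this $x$-identity in $t$, the $\hat u$-contributions cancel because $\hat u_x=\hat D\hat u_t$ by (\ref{obs1non}), leaving the linear $x$-ODE $g_x=\hat D\,\frac{\partial f}{\partial\hat p}(\hat p,\hat u)\,g$ with initial value $g(0,t)=\hat D f(X(t),\hat u(0,t))-\hat D\dot X(t)=-\hat D\tilde f(t)$, where one uses $\dot X(t)=f(X(t),u(0,t))=f(\hat\rho(0,t),\tilde u(0,t)+\hat u(0,t))$ and $\hat\rho(0,t)=X(t)$ (see the footnote). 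Solving this ODE writes $g$ as $-\hat D$ times the state-transition matrix of that linearization times $\tilde f(t)$; premultiplied by $\frac{\partial\kappa}{\partial\xi}$ evaluated along $\hat p$, this is the gain $r_1$ of Appendix~\ref{secap1}.

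With these facts, differentiating (\ref{back1non}) and noting that the explicit first-argument and $\hat Dx$ contributions of $\kappa$ cancel between $\hat D\hat w_t$ and $\hat w_x$, one gets $\hat D\hat w_t(x,t)-\hat w_x(x,t)=\frac{\partial\kappa}{\partial\xi}(t+\hat Dx,\hat p(x,t))\,g(x,t)$, which together with the previous paragraph is exactly (\ref{tilw2}). The boundary relation (\ref{tilw33}) holds because (\ref{connl})--(\ref{predes1}) give $U(t)=\kappa(t+\hat D,\hat p(1,t))$, so $\hat w(1,t)=\hat u(1,t)-U(t)=0$. Evaluating (\ref{back1non}) at $x=0$, where $\hat p(0,t)=X(t)$, gives $\hat u(0,t)=\hat w(0,t)+\kappa(t,X(t))$, and substituting into $\dot X(t)=f(X(t),u(0,t))=f(X(t),\hat u(0,t)+\tilde u(0,t))$ yields (\ref{tilw1}).

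For the observer error, I would first write down the PDE of the actual actuator state $u$: as in the state-dependent delay construction of \cite{bek}, $u$ transports the input from the true delayed time $\phi(t)$ at $x=0$ to $U(t)$ at $x=1$, namely $u_t(x,t)=\pi(x,t)u_x(x,t)$, $u(1,t)=U(t)$, with $\pi$ of (\ref{defpinon}) identified from (\ref{siinon})--(\ref{invpnon}). Since $\hat u_t=\hat u_x/\hat D$ and $u_x=\tilde u_x+\hat u_x$, subtraction gives $\tilde u_t=\pi\tilde u_x-(1-\hat D\pi)\hat u_x/\hat D$, which is (\ref{obs3non1}) with $r=\hat u_x/\hat D=\hat u_t$ (so $r(x,t)=\dot U(t+\hat D(x-1))$, the function of Appendix~\ref{secap1}), while $u(1,t)=\hat u(1,t)=U(t)$ gives (\ref{obs3non}). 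Differentiating (\ref{obs3non1}) in $x$, and the identity $\tilde u(1,t)\equiv0$ in $t$ (then invoking (\ref{obs3non1}) at $x=1$), yields (\ref{tilu1})--(\ref{tilu2}). Finally, differentiating (\ref{tilw2}) once and twice in $x$ gives the interior equations (\ref{trr1non}) and (\ref{trr1non1}) with new gains $r_2$, $r_3$; evaluating (\ref{tilw2}) at $x=1$ with $\hat w(1,t)\equiv0$ gives (\ref{sys2ntr11non}); and evaluating (\ref{trr1non}) at $x=1$ together with the $t$-derivative of (\ref{sys2ntr11non}) gives (\ref{sys2ntr11nonxx}), the latter derivative bringing in $\dot r_1(1,t)$ — the source of the terms with $r_4$, $r_5$ — and $\dot{\tilde f}(t)$, which through (\ref{nonlper}), (\ref{obs3non1}) and $\hat u_t(0,t)=r(0,t)$ produces the terms involving $\tilde f_{\hat\rho}$, $\tilde f_{\hat u}$, $\partial f/\partial\hat u$ and $\tilde u_x(0,t)$.

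The step I expect to be the real obstacle is not any single identity but the bookkeeping behind the second-order boundary term (\ref{sys2ntr11nonxx}): computing $\frac{d}{dt}(r_1(1,t)\tilde f(t))$ drags in the time-derivative of the state-transition matrix of the $\hat p$-linearization (through $\hat p_t(1,t)$) as well as $\dot{\tilde f}(t)$ expanded along the plant and observer dynamics, and organizing the result into pieces linear versus quadratic in $\tilde f(t)$ is what produces the long list of terms; everything else reduces to the chain rule applied to (\ref{defprnon}), (\ref{back1non}) and the two transport equations, together with the well-posedness restriction from (\ref{prhx}) that keeps all the coefficients bounded.
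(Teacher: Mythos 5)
Your proposal is correct and follows essentially the same route as the paper: the identity you derive for $g=\hat p_x-\hat D\hat p_t$ (via the linear $x$-ODE with $g(0,t)=-\hat D\tilde f(t)$) is exactly the paper's Lemma~\ref{lemdp} in Appendix~\ref{appc} up to sign, your identification $r=\hat u_x/\hat D$ agrees with (\ref{roonon}) through the inverse transformation, and the remaining steps (transport representation (\ref{defrunon}) of $u$, boundary evaluations of (\ref{tilw2}), (\ref{trr1non}) at $x=1$, and the $t$-derivative of (\ref{sys2ntr11non}) generating the $r_4$, $r_5$, $\tilde f_{\hat\rho}$, $\tilde f_{\hat u}$ terms) are precisely the ``lengthy but straightforward calculations'' the paper invokes.
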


\begin{proof}
We re-write system (\ref{nonlper}) in the form
\begin{eqnarray}
\dot{X}(t)&=&f\left(X(t),u(0,t)\right)\label{sys2non}\\
u_t(x,t)&=&\pi(x,t)u_x(x,t)\label{gfgf}\\
u(1,t)&=&U(t)\label{sys2nnon},
\end{eqnarray}
where the actual actuator state $u(x,t)$ (for which an observer is given in (\ref{obs1non})--(\ref{obs2non})) satisfies (\ref{gfgf})--(\ref{sys2nnon}) and
\begin{eqnarray}
u(x,t)=U\left(\phi\left(t+x\left(\sigma(t)-t\right)\right)\right),\quad \mbox{for all $x\in[0,1]$},\label{defrunon}
\end{eqnarray}
where $\phi$ is defined in  (\ref{defpnon}), or, incorporating $\delta$, as
\begin{eqnarray}
u(x,t)&=&U\biggl(t+x\left(\hat{D}+\delta\left(\sigma(t),X(\sigma(t))\right)\right)-\hat{D}\nonumber\\
&&-\delta\left(t+x\left(\hat{D}+\delta\left(\sigma(t),X(\sigma(t))\right)\right)\right)\biggr).\label{defrudeltanon}
\end{eqnarray}
The rest of the lemma is proved with lengthy but straightforward calculations and using Lemma \ref{lemdp} from Appendix \ref{appc}, the backstepping transformation (\ref{back1non}), its inverse (\ref{invback1non}), and (\ref{defprnon}), (\ref{obs1non})--(\ref{obs2non}), (\ref{sys2non})--(\ref{sys2nnon}), (\ref{invdefprnon}), (\ref{obsernon}). 
\end{proof}

For $\pi$ to be a meaningful propagation speed it should be positive and uniformly bounded from below and above. Using (\ref{defpinon}), one can conclude that $\delta$, the initial conditions and the solutions of the system should satisfy, in addition to (\ref{prhx}) which guarantees that $0<\dot{\sigma}(t)$, also
\begin{eqnarray}
 0<\hat{D}+\delta(\sigma(\theta),P^*(\theta)),\quad\!\!\! \mbox{for all $\theta\geq t_0-D\left(X(t_0)\right)$}\label{prhxnew},
 \end{eqnarray}
which guarantees $0<\sigma(t)-t$. The two conditions (\ref{prhx}), (\ref{prhxnew}) incorporate the functions $\sigma$ and $P^*$, that is, they are not expressed in terms of the perturbation $\delta$ and the functional $\Pi$. We derive next a sufficient condition for (\ref{prhx}), (\ref{prhxnew}) to be satisfied, in terms of $\Pi$.

\begin{lemma}
\label{lemmanm}
There exist positive constants $c_1$, $c^*$, such that if the perturbation $\delta$ satisfies (\ref{89}), then for all solutions of the system satisfying, 
\begin{eqnarray}
\Pi(t)<c^*,\label{222}
\end{eqnarray}
 they also satisfy 
 \begin{eqnarray}
 \left(c_1+\hat{\mu}\left(\left|P^*(\theta)\right|\right)\right)\left(1+\left|f\left(P^*(\theta),U(\theta)\right)\right|\right)&<&R\label{136},
 \end{eqnarray}
for all $\phi(t)\leq\theta\leq t$, where
\begin{eqnarray}
R=\min\left\{r^*,c,\hat{D}\right\},
\end{eqnarray}
 for some $0<{c}<1$, and hence, conditions (\ref{prhx}) for $0<c<1$, and (\ref{prhxnew}) are satisfied.
\end{lemma}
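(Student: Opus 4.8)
The plan is to establish (\ref{136}) directly and then read off (\ref{prhx}) and (\ref{prhxnew}) from it. Using (\ref{89}) with $(t,\xi)=(\sigma(\theta),P^*(\theta))$, the left-hand side of (\ref{136}) dominates both $\left|\delta\left(\sigma(\theta),P^*(\theta)\right)\right|$ and $|G(\theta)|=\left|\delta_t\left(\sigma(\theta),P^*(\theta)\right)+\nabla\delta\left(\sigma(\theta),P^*(\theta)\right)f\left(P^*(\theta),U(\theta)\right)\right|$ (bound $|\delta_t|$ and $|\nabla\delta|$ each by $c_1+\hat{\mu}(|P^*(\theta)|)$). Hence, once (\ref{136}) holds, $G(\theta)<R\leq c<1$ gives $1-G(\theta)>1-c>0$, i.e.\ (\ref{prhx}), and $\left|\delta\left(\sigma(\theta),P^*(\theta)\right)\right|<R\leq\hat{D}$ gives (\ref{prhxnew}). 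Accordingly, fix $c=\tfrac12$ (so $R=\min\{r^*,\tfrac12,\hat{D}\}$ is a fixed positive constant) and take $\alpha^*$ in (\ref{defgadelnon}) to be a class $\mathcal{K}_\infty$ function with $\alpha^*(s)\geq\alpha_3(s)+s^2$, $\alpha_3$ from Assumption \ref{ass1}; it then remains only to prove (\ref{136}) for $\phi(t)\leq\theta\leq t$, with $\phi$ as in (\ref{defpnon}).

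The obstruction is that $P^*$ in (\ref{sysstrexnon}) is well defined only while $1-G\neq0$, so one cannot bound $P^*$ before $G$ is controlled, while $G$ is assembled from $P^*$; this is broken by a bootstrap in the predictor variable $\theta$. On any subinterval $[\phi(t),\theta']$, $\theta'\leq t$, on which (\ref{136}) holds, $1-G>1-c>0$ and $P^*$ solves $\frac{d}{d\theta}P^*(\theta)=f\left(P^*(\theta),U(\theta)\right)/(1-G(\theta))$. Differentiating $R(P^*(\theta))$ along this ODE and using (\ref{forb2}) yields $\frac{d}{d\theta}R(P^*(\theta))\leq\frac{1}{1-c}\bigl(R(P^*(\theta))+\alpha_3(|U(\theta)|)\bigr)$, whence, by Gr\"onwall and $\theta-\phi(t)\leq t-\phi(t)=\hat{D}+\delta(t,X(t))\leq\hat{D}+c_1+\hat{\mu}(c^*)$, one gets $R(P^*(\theta))\leq e^{(\hat{D}+c_1+\hat{\mu}(c^*))/(1-c)}\bigl(R(X(t))+\tfrac{1}{1-c}\int_{\phi(t)}^{t}\alpha_3(|U(s)|)\,ds\bigr)$. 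Now: (\ref{forb1}) and $|X(t)|\leq\Pi(t)$ bound $R(X(t))$; $\int_{t-\hat{D}}^{t}\alpha_3(|U|)\leq\int_{t-\hat{D}}^{t}\alpha^*(|U|)\leq\Pi(t)$; Agmon's inequality with the $L^2$ bounds $\int_{t-\hat{D}}^{t}|U|^2\leq\Pi(t)$ and $\int_{t-\hat{D}}^{t}|\dot{U}|^2\leq\Pi(t)$ (the $\dot{U}$-integral in (\ref{defgadelnon}) runs over $[t-\hat{D}-\max\{0,\delta(t,X(t))\},t]\supseteq[\phi(t),t]$) gives $\sup_{[t-\hat D,t]}|U|\le\gamma(\Pi(t))$; and on the possibly extra piece $[\phi(t),t-\hat{D}]$ (when $\delta(t,X(t))>0$) one uses $|U(s)|\leq|U(t-\hat{D})|+\sqrt{\delta(t,X(t))}\bigl(\int_{\phi(t)}^{t}|\dot{U}|^2\bigr)^{1/2}$ with $\delta(t,X(t))\leq c_1+\hat{\mu}(\Pi(t))$. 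Combining, the Gr\"onwall bound is $\leq\gamma_1(\Pi(t))$ for a class $\mathcal{K}_\infty$ function $\gamma_1$ not depending on the (small) constants $c_1,c^*$; hence $|P^*(\theta)|\leq\alpha_1^{-1}(\gamma_1(\Pi(t)))$, and since $f\in C^2$ with $f(0,0)=0$ is Lipschitz on bounded sets, also $|f(P^*(\theta),U(\theta))|\leq\gamma_2(\Pi(t))$ for some class $\mathcal{K}_\infty$ function $\gamma_2$.

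Feeding these into (\ref{136}) gives, on $[\phi(t),\theta']$, $\bigl(c_1+\hat{\mu}(|P^*(\theta)|)\bigr)\bigl(1+|f(P^*(\theta),U(\theta))|\bigr)\leq\bigl(c_1+\hat{\mu}(\alpha_1^{-1}(\gamma_1(\Pi(t))))\bigr)\bigl(1+\gamma_2(\Pi(t))\bigr)$. Since the right-hand side with $\Pi(t)$ replaced by $c^*$ tends to $0$ as $c_1,c^*\to0^+$ while $R$ is fixed, choose $c_1>0$ and $c^*>0$ so small that it is $<R$; then whenever (\ref{136}) holds on $[\phi(t),\theta']$ it holds there with a fixed positive margin. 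A standard continuity argument closes the bootstrap: at $\theta=\phi(t)$, (\ref{136}) holds because $P^*(\phi(t))=X(t)$ and $\Pi(t)<c^*$; letting $\theta^*$ be the supremum of the $\theta'$ for which (\ref{136}) holds on $[\phi(t),\theta']$, if $\theta^*<t$ then continuity of $\theta\mapsto(P^*(\theta),U(\theta))$ and the fixed margin would extend (\ref{136}) beyond $\theta^*$, contradicting maximality; hence $\theta^*=t$, so (\ref{136}), and therefore (\ref{prhx}) and (\ref{prhxnew}), hold for all $\phi(t)\leq\theta\leq t$. In particular $1-G>1-c>0$ throughout $[\phi(t),t]$, so $P^*$ does not escape there and the bootstrap is consistent.

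The main obstacle is exactly this circularity --- the denominator $1-G$ that must be kept away from zero for (\ref{sysstrexnon}) to be well posed is built from $P^*$ and $f(P^*,U)$, which can be estimated only once that denominator is controlled --- and it is handled by the bootstrap above, the essential quantitative fact being that the left-hand side of (\ref{136}) can be driven uniformly below the fixed constant $R$ by shrinking $c_1$ and $c^*$. The only remaining care is the bookkeeping of which pointwise and $L^2$ norms of $U$ enter the Gr\"onwall estimate and the verification that each is dominated by $\Pi(t)$ on all of $[\phi(t),t]$ --- which is precisely what dictates the $\max\{0,\delta(t,X(t))\}$ term and the choice of $\alpha^*$ in the definition (\ref{defgadelnon}) of $\Pi$ --- but this is routine once those choices are fixed at the outset.
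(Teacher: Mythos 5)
Your proof is correct, but it takes a genuinely different route from the paper's. The paper does not re-derive the bound on $P^*$: it invokes Fact \ref{lemforp} (Lemma 4 of \cite{bek}), which gives $|P^*(\theta)|\leq\hat{\zeta}_1\bigl(|X(t)|+\sup_{\phi(t)\leq s\leq t}|U(s)|,R\bigr)$ under (\ref{prhx})--(\ref{prhxnew}), then bounds $\sup_{\phi(t)\leq\theta\leq t}|U(\theta)|$ by passing through the transformed variables --- writing $U(\theta)=U(t)-\int_\theta^t\dot U$, using (\ref{connl}), (\ref{kap}), Lemma \ref{lemap1} and Cauchy--Schwarz to get (\ref{rebn}) in terms of the PDE norms $\hat u$, $\hat u_x$, $\tilde u_x$, and then Lemma \ref{lemga} to convert to $\Pi$ --- before choosing $c_1$, $c^*$ via the smallness condition (\ref{ccc}). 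You instead re-prove the content of that Fact from scratch: the strong forward-completeness pair $(R(\cdot),\alpha_3)$ of Assumption \ref{ass1} plus Gr\"onwall along the $\theta$-ODE for $P^*$, with an explicit bootstrap/continuation argument in $\theta$ that resolves the circularity between controlling $1-G$ and estimating $P^*$; and you bound $\sup|U|$ directly from the $U$-norms in $\Pi$ via Agmon's inequality (after slightly enlarging $\alpha^*$ so that $\alpha^*\geq\alpha_3(s)+s^2$, which is compatible with the paper's other requirements on $\alpha^*$ but must be imposed at the level of Theorem \ref{thmpernon} where $\alpha^*$ is fixed). What each approach buys: yours is self-contained, avoids the backstepping/observer machinery entirely, and makes explicit the continuation step that the paper leaves implicit (its Fact assumes the very conditions (\ref{prhx}), (\ref{prhxnew}) being concluded); the paper's version, besides being shorter by citation, produces exactly the composite functions $\alpha_4$, $\zeta_2$, $\hat\zeta_1$ and the constant relation (\ref{ccc}) that are reused verbatim in the Lyapunov analysis of Appendix D (the definition of $\hat c$ in (\ref{tosp})), so your variant, while valid for this lemma, would require minor re-bookkeeping there.
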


\begin{proof}
See Appendix \ref{secprb}.
\end{proof}


\begin{lemma}
\label{lyapl}
There exist a continuously differentiable, positive definite function $S^*$, a class $\mathcal{K}_{\infty}$ function $\alpha^*$ and positive constants $\lambda$, $c_1$, $c^*$ such that if the perturbation $\delta$ satisfies (\ref{89}) then for all solutions of the system satisfying (\ref{222}), the Lyapunov function
\begin{eqnarray}
V(t)&=&S^*\left(X(t)\right)+g_{11}\int_0^1e^{g_1x}|\tilde{u}(x,t)|dx+g_6\int_0^1e^{g_2x}\nonumber\\
&&\times\tilde{u}_x(x,t)^2dx+g_9\hat{D}\int_0^1e^{g_{10}x}\left|\hat{w}_x(x,t)\right|dx\nonumber\\
&&+g_{12}\hat{D}\int_0^1e^{g_3x}\alpha^*(|\hat{w}(x,t)|)dx+g_8\hat{D}\int_0^1e^{g_5x}\nonumber\\
&&\times\hat{w}_{xx}(x,t)^2dx+g_7\hat{D}\int_0^1e^{g_4x}\hat{w}_x(x,t)^2dx,
\end{eqnarray}
where 
$g_i>0$, $i=1\ldots 12$, satisfies 
\begin{eqnarray}
V(t)\leq V(t_0)e^{-\lambda(t-t_0)}, \quad \mbox{for all $t\geq t_0$}.\label{Afth}
\end{eqnarray}
\end{lemma}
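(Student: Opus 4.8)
The approach is to assemble $V$ in two stages and then prove $\dot V\le-\lambda V$ on the set \eqref{222} by a hierarchical, term-by-term domination argument typical of backstepping Lyapunov functionals. First, applying a converse Lyapunov theorem (cf.\ \cite{khalil}) to the delay-free closed loop $\dot X=f(X,\kappa(t,X))$, which is locally exponentially stable by Assumption~\ref{ass2}, one obtains a continuously differentiable, positive definite $S^*$ with quadratic-type bounds $\underline a|X|^2\le S^*(X)\le\bar a|X|^2$ on $D_{r^*}$ and $\frac{\partial S^*}{\partial X}f(X,\kappa(t,X))\le-c_0|X|^2$. Adding and subtracting the nominal input in \eqref{tilw1} and using that $f$ and $\kappa$ are $C^2$/$C^3$ then gives $\frac{d}{dt}S^*(X(t))\le-\tfrac{c_0}{2}|X(t)|^2+M\big(\hat w(0,t)^2+\tilde u(0,t)^2\big)$ on \eqref{222}, for some constant $M$. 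The remaining terms of $V$ are exactly the weighted $\mathcal L_1$/$\mathcal L_2$ norms of $(\tilde u,\tilde u_x)$ and $(\hat w,\hat w_x,\hat w_{xx})$ needed to counteract these two boundary cross terms and the perturbation-induced couplings.

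Next I would differentiate each integral term of $V$ along the transformed dynamics \eqref{tilw2}--\eqref{tilw33}, \eqref{obs3non1}--\eqref{obs3non}, \eqref{tilu1}--\eqref{tilu2}, \eqref{trr1non}--\eqref{sys2ntr11non}, \eqref{trr1non1}--\eqref{sys2ntr11nonxx}, integrating by parts in $x$. For the $\hat w$-terms, the transport structure $\hat D\hat w_t=\hat w_x+r_1\tilde f$ together with its $x$- and $xx$-differentiated versions produces, for each weight $e^{g_ix}$, a strictly negative spatial-decay term $-\tfrac{g_i}{\hat D}\int_0^1 e^{g_ix}(\cdot)\,dx$, a strictly negative trace at $x=0$, a trace at $x=1$ that by \eqref{tilw33}, \eqref{sys2ntr11non}, \eqref{sys2ntr11nonxx} is controlled by $\tilde f(t)^2$ (and, for the $\hat w_{xx}$ term, additionally by $\tilde u_x(0,t)^2$ and $\tilde u(0,t)^2$ through $f$-derivatives), and cross terms $\int_0^1 e^{g_ix}(\cdot)\,r_i\tilde f\,dx$. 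For the $\tilde u$-terms, the propagation speed $\pi(x,t)$ in \eqref{defpinon} is, by Lemma~\ref{lemmanm}, well defined and bounded above and below by positive constants on \eqref{222}, so the same integration by parts yields a negative decay term, a favorable trace at $x=1$ (zero for $\tilde u$ by \eqref{obs3non}, of order $|(\tfrac1{\pi(1,t)}-\hat D)r(1,t)|$ for $\tilde u_x$ by \eqref{tilu2}), a trace at $x=0$ of size $|\tilde u(0,t)|$ (resp.\ $|\tilde u_x(0,t)|$), and residual cross terms involving $r$, $r_x$, $\pi_x$ and the coefficient $1-\hat D\pi$.

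The crux is to show that every non-dissipative term is small on \eqref{222}. I would first bound $|\tilde f(t)|$, $|\tilde f_{\hat\rho}(t)|$, $|\tilde f_{\hat u}(t)|$ from \eqref{fti}, \eqref{fzz}, \eqref{fuu} by a constant times $|\tilde u(0,t)|$, using $f\in C^2$ and smallness of $\tilde u(0,t)$; and the trace bounds $|\tilde u(0,t)|\le\int_0^1|\tilde u_x(x,t)|dx$, $|\hat w(0,t)|\le\int_0^1|\hat w_x(x,t)|dx$ (since $\tilde u(1,t)=\hat w(1,t)=0$), together with the analogous bound for $\tilde u_x(0,t)$. The decisive structural observation, read off from the definitions of $r,r_1,\dots,r_5$ in Appendix~\ref{secap1} and from \eqref{defpinon}, is that every coupling or residual term appearing in $\dot V$ factors as a uniformly bounded quantity times one of the ``small'' factors $\tilde u$, $\tilde f$, $\pi_x$, $1-\hat D\pi$ or $\tfrac1{\pi}-\hat D$ --- the last three being $O(|\delta|+|\delta_t|+|\nabla\delta|)$ because $\pi\equiv1/\hat D$ when $\delta\equiv0$, and the first two being small because the observer is exact when $\delta\equiv0$ --- so that, using \eqref{89}, all such terms are dominated on \eqref{222} by shrinking $c_1$ and $c^*$ (with the $\tilde u$- and $\tilde f$-contributions absorbed into the dissipation of the observer-error subsystem). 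Young's inequality then closes the estimate, provided the gains are chosen in the right order: fix $S^*$; pick $g_{11},g_1$ to dominate the $\tilde u(0,t)^2$ terms in $\dot S^*$ and the $\hat w_{xx}$ trace; then $g_6,g_2$ for $\tilde u_x$; then $g_{12},g_3$; $g_9,g_{10}$; $g_7,g_4$; $g_8,g_5$ in turn; finally shrink $c_1,c^*$. This yields $\dot V(t)\le-\lambda V(t)$ on \eqref{222} for some $\lambda>0$, and \eqref{Afth} follows by the comparison lemma.

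I expect the main obstacle to be the observer-error subsystem \eqref{obs3non1}--\eqref{obs3non}, \eqref{tilu1}--\eqref{tilu2}: in contrast to the nominal constant-delay case its propagation speed $\pi(x,t)$ is time- and state-varying, its forcing and the boundary datum \eqref{tilu2} are genuinely inhomogeneous, and the top-order backstepping boundary term \eqref{sys2ntr11nonxx} couples $\tilde u_x(0,t)$ with $f$-derivatives. Verifying that all of these --- including the spatial derivatives $r_x$ and $\pi_x$ and the coefficients $1-\hat D\pi$, $\tfrac1{\pi}-\hat D$ --- are uniformly bounded, and bounded by $O(|\delta|+|\delta_t|+|\nabla\delta|)$ where smallness is required, so that the observer-error and backstepping norms form a genuine dissipative cascade driving the $X$-equation, is where the bulk of the (routine but delicate) estimation effort lies.
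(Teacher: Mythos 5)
Your overall architecture (converse Lyapunov function for the delay-free loop, integration by parts along the transformed dynamics, bounding $\tilde f,\tilde f_{\hat\rho},\tilde f_{\hat u}$ by $|\tilde u(0,t)|$, trace inequalities from $\tilde u(1,t)=\hat w(1,t)=0$, exploiting that $\pi_x$, $1-\hat D\pi$, $\tfrac1\pi-\hat D$ are $O(|\delta|+|\delta_t|+|\nabla\delta|)$, the ordered choice of the gains $g_i$, and the comparison lemma) coincides with the paper's proof in Appendix D. However, there is one genuine gap: you take $S^*$ with quadratic bounds and claim $\frac{d}{dt}S^*(X)\le-\tfrac{c_0}{2}|X|^2+M(\hat w(0,t)^2+\tilde u(0,t)^2)$. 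This is incompatible with the structure of $V$ and, more importantly, with the perturbation-induced couplings you must absorb. The residual terms in $\dot V$ that carry the factor $\sup_x|1-\hat D\pi(x,t)|$ (e.g., $g_{11}\sup_x|1-\hat D\pi|\int_0^1 e^{g_1x}|r(x,t)|dx$, with $r$ from (\ref{roonon}) containing $\partial_t\kappa+\partial_{\hat\rho}\kappa\, f$) are genuinely \emph{first order} in the state, of size $O(\hat c)\,\bigl(|X|+\int_0^1\alpha^*(|\hat w|)dx+\int_0^1|\hat w_x|dx\bigr)$, where $\hat c$ is a constant determined by $c_1$ in (\ref{89}) and does not vanish with the state. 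A quadratic dissipation $-\tfrac{c_0}{2}|X|^2$ cannot dominate an $O(\hat c)|X|$ term near the origin, and no other negative term in $\dot V$ is proportional to $|X|$; so your estimate closes only to practical stability (an offset of size $\hat c$), not to $\dot V\le-\lambda V$.

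The paper's essential device, absent from your plan, is to make the $X$-part of $V$ homogeneous of degree one: it sets $S^*=\sqrt S$, which by (\ref{iss1new}) yields $\dot S^*\le-\tfrac{M_3}{2\sqrt{M_1}}|X|+\tfrac{M_4L^*(R)}{2\sqrt{M_1}}\bigl(|\hat w(0,t)|+|\tilde u(0,t)|\bigr)$, i.e., linear dissipation and linear cross terms, matched to the $\mathcal L_1$-type components $\int e^{g_1x}|\tilde u|dx$, $\int e^{g_3x}\alpha^*(|\hat w|)dx$, $\int e^{g_{10}x}|\hat w_x|dx$ of $V$ (the boundary cross terms are then killed by $-g_{11}\pi(0,t)|\tilde u(0,t)|$ and $-g_{12}\alpha^*(|\hat w(0,t)|)$ with $\alpha^*(s)>s$). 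With everything first order, all $O(\hat c)$ couplings — after the local Lipschitz majorizations of $\alpha_r$, $\alpha_{1,r_x}$ used in (\ref{wxwx}) — can be absorbed by shrinking $c_1$ and $c^*$, which is exactly how the paper reaches $\dot V\le-\lambda V$ and (\ref{Afth}). You should replace your quadratic ansatz for $S^*$ by this degree-one construction (or otherwise produce a negative term linear in $|X|$); the rest of your outline then goes through as in the paper.
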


\begin{proof}
See Appendix \ref{apply}.
\end{proof}

The next two lemmas relate the Lyapunov function $V$ with the norm of the system in the original variables, represented with PDEs, and the norm in PDE representation with the norm in standard delay form.

\begin{lemma}
\label{lemag}
There exist a positive constant $c^*$, a class $\mathcal{KC}_{\infty}$ function $\alpha_{24}$ and a class $\mathcal{K}_{\infty}$ function $\alpha_{25}$ such that for all solutions of the system satisfying (\ref{222}) the following holds
\begin{eqnarray}
\alpha_{24}\left(\Gamma(t),R\right)\leq V(t)\leq \alpha_{25}\left(\Gamma(t)\right),
\end{eqnarray}
where
\begin{eqnarray}
\Gamma(t)&=&|X(t)|+\int_0^1\alpha^*\left(\left|\hat{u}(x,t)\right|\right)dx+\int_0^1{u}_x(x,t)^2dx\nonumber\\
&&+\int_0^1\hat{u}_x(x,t)^2dx+\int_0^1\hat{u}_{xx}(x,t)^2dx.\label{ghh}
\end{eqnarray}
\end{lemma}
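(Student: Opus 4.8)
The plan is to show that the change of variables between the triple $(X,\hat u,u)$ and the triple $(X,\hat w,\tilde u)$ induced by the backstepping transformation (\ref{back1non})--(\ref{invdefprnon}) and the observer error definition (\ref{obsernon}) is, on the set defined by (\ref{222}), a bounded operator with a bounded inverse, and then to combine this with elementary Agmon/Poincar\'e/Cauchy--Schwarz estimates and a Jensen-type inequality to pass between the two ``energies" $V$ and $\Gamma$. The restriction (\ref{222}) is used only to keep $|X(t)|$ and the actuator-state norms bounded, hence to keep the arguments $\hat p,\hat\rho$ at which the $C^2$ map $f$ and the $C^3$ map $\kappa$ are evaluated inside a fixed compact set, so that the relevant $C^2$/$C^3$ bounds and the derivative bounds (\ref{kap}) become genuine constants; note that (\ref{kap}) with $i=j=0$ forces $\kappa(t,0)=0$, so every appearance of $\kappa$ vanishes with its argument.

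For the \emph{upper} bound $V(t)\le\alpha_{25}(\Gamma(t))$ I would estimate each term of $V$ separately. The term $S^*(X(t))$ is bounded by a class $\mathcal{K}_\infty$ function of $|X(t)|$ by continuity and positive definiteness of $S^*$. For the $\tilde u$-terms, $\tilde u(1,t)=0$ (see (\ref{obs3non})) gives $|\tilde u(x,t)|\le\int_0^1|\tilde u_y(y,t)|dy\le(\int_0^1\tilde u_y(y,t)^2dy)^{1/2}$, and $\tilde u_x=u_x-\hat u_x$, so $\int_0^1|\tilde u(x,t)|dx$ and $\int_0^1\tilde u_x(x,t)^2dx$ are dominated by $\int_0^1u_x^2dx+\int_0^1\hat u_x^2dx$, both present in $\Gamma(t)$. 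For the $\hat w$-terms I would first bound $\hat p,\hat p_x,\hat p_{xx}$ pointwise in $x$: (\ref{defprnon}) gives $\partial_x\hat p=\hat Df(\hat p,\hat u)$, so Assumption \ref{ass1} and a Gronwall estimate in $x$ yield $|\hat p(x,t)|\le\alpha_1^{-1}(e^{\hat D}(\alpha_2(|X(t)|)+\hat D\int_0^1\alpha_3(|\hat u(y,t)|)dy))$, while differentiating the $x$-ODE expresses $\hat p_x,\hat p_{xx}$ through $f,\nabla f,\partial_{\hat u}f$ at $(\hat p,\hat u)$, hence through $|\hat p|,|\hat u|,|\hat u_x|$; inserting these together with (\ref{kap}) into $\hat w=\hat u-\kappa(\cdot,\hat p)$ and its first two $x$-derivatives, and controlling $\sup_x|\hat u(x,t)|$ by $|X(t)|$ (through $\int_0^1\alpha^*(|\hat u|)dx$, the passage through $\alpha^*$ being handled by a Jensen-type inequality) and $(\int_0^1\hat u_x^2dx)^{1/2}$, bounds $\int_0^1\alpha^*(|\hat w|)dx$, $\int_0^1\hat w_x^2dx$, $\int_0^1\hat w_{xx}^2dx$ by a class $\mathcal{K}_\infty$ function of $\Gamma(t)$. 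Since $\hat D$ is a fixed parameter, the resulting constants are absorbed into $\alpha_{25}$, and (\ref{222}) fixes the compact set on which all of the above nonlinear bounds are valid.

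For the \emph{lower} bound $\alpha_{24}(\Gamma(t),R)\le V(t)$ I would run the same scheme in reverse using the inverse transformation (\ref{invback1non}), (\ref{invdefprnon}) together with the identity $\hat\rho=\hat p$ recorded in Lemma \ref{lemma1non}. Positive definiteness of $S^*$ on $D_{r^*}$ gives $|X(t)|\le\underline{\alpha}^{-1}(V(t))$ there; the weights $e^{g_ix}\ge1$ and the $\hat D$ factors give $\int_0^1\tilde u_x^2dx\le V/g_6$, $\int_0^1\hat w_x^2dx\le V/(g_7\hat D)$, $\int_0^1\hat w_{xx}^2dx\le V/(g_8\hat D)$ and $\int_0^1\alpha^*(|\hat w|)dx\le V/(g_{12}\hat D)$; it is precisely these $\hat D$-divisions, together with the restriction of $S^*$ to $D_{r^*}$ and of the propagation speed through (\ref{prhx})--(\ref{prhxnew}) (parametrized by $c$), that force $\alpha_{24}$ to carry the second argument $R=\min\{r^*,c,\hat D\}$ and to weaken as $R\to0$. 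From the inverse transformation I would then bound $\int_0^1\alpha^*(|\hat u|)dx$, $\int_0^1\hat u_x^2dx$, $\int_0^1\hat u_{xx}^2dx$ by a class $\mathcal{K}_\infty$ function of $|X(t)|$ and of $\int_0^1\alpha^*(|\hat w|)dx$, $\int_0^1\hat w_x^2dx$, $\int_0^1\hat w_{xx}^2dx$ (controlling $\sup_x|\hat\rho(x,t)|,\sup_x|\hat u(x,t)|$ by Agmon and the forward-completeness estimate applied to (\ref{invdefprnon})), and finally recover $\int_0^1u_x^2dx\le2\int_0^1\tilde u_x^2dx+2\int_0^1\hat u_x^2dx$ from $\tilde u_x=u_x-\hat u_x$. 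Collecting terms gives a bound $\Gamma(t)\le\psi(V(t),R)$ with $\psi$ of class $\mathcal{KC}_\infty$ in its first argument; taking $\alpha_{24}$ to be the inverse of $\psi$ in its first argument completes the proof.

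I expect the main obstacle to be the bookkeeping of the \emph{second}-order terms, i.e. the estimates on $\hat w_{xx}$ (and on $\hat u_{xx}$ for the inverse): differentiating the transformation twice in $x$ produces products of $f$, its first and second derivatives, $\hat p_x$, $\hat p_{xx}$, and of $\kappa$-derivatives up to third order, so one must carefully separate the quantities available only in $L^2(0,1)$ (namely $\hat u_x,\hat u_{xx},\hat w_x,\hat w_{xx}$) from those available pointwise (namely $X$ and, after Agmon, $\hat u,\hat w,\hat p,\hat\rho$), and verify that every cross term is either a pointwise-bounded factor times an $L^2$ factor or a product of two $L^2$ factors whose $L^1$ norm is then controlled by Cauchy--Schwarz. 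Everything else is routine once the bounded invertibility of the transformation, on the compact set pinned down by (\ref{222}), is in hand.
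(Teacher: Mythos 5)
Your proposal is correct and follows essentially the same route as the paper: the paper's proof of Lemma \ref{lemag} consists precisely of the quadratic bounds (\ref{issn1}) on $S=(S^*)^2$ for the $X$-part, the direct-transformation estimates of Lemma \ref{lemmauw} (bounding $\hat w$, $\hat w_x$, $\hat w_{xx}$ by class-$\mathcal{K}$ functions of $|X|+\int_0^1\alpha^*(|\hat u|)dx+\int_0^1\hat u_x^2dx$ plus the $\hat u_x$, $\hat u_{xx}$ terms) for the upper bound, the inverse-transformation estimates of Lemma \ref{lemmauwnew} for the lower bound (which is where the second argument $R$ of $\alpha_{24}$ actually enters), the Poincar\'e bound $|\tilde u(x,t)|\le\int_0^1|\tilde u_x(y,t)|dy$ from (\ref{obs3non}), the identity $\tilde u_x=u_x-\hat u_x$, Cauchy--Schwarz and routine class-$\mathcal{K}$ majorizations --- i.e.\ the same two-sided scheme you describe, with the technical content delegated to Appendix B rather than re-derived inline. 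The one substantive deviation is your bound on the inverse predictor $\hat\rho$: you apply the forward-completeness pair $(R,\alpha_3)$ of Assumption \ref{ass1} to (\ref{invdefprnon}), but there the input to $f$ is $\kappa(t+\hat{D}y,\hat\rho)+\hat w$, so $\alpha_3$ of the input is itself a (possibly superlinear) function of $|\hat\rho|$ through $\hat\alpha$, and the Gronwall comparison in $x$ does not close globally; you need a small-data continuation argument on $x\in[0,1]$, which is available because (\ref{222}) (with $c^*$ small) keeps $|X|$ and the $\hat w$-norms small, but it should be made explicit. The paper avoids this by bounding $\hat\rho$ in Lemma \ref{lemz} with the converse Lyapunov function of the nominal closed loop (Assumption \ref{ass2}, via Khalil's Theorem 4.14), so that $\kappa$ is absorbed into the stable nominal dynamics and only $\hat w$ acts as a perturbation; this yields the constant $M^*(R,M)$ and the restriction (\ref{cshit}). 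Apart from that substitution, your decomposition, the attribution of the $R$-dependence to the inverse step, and the second-order bookkeeping you flag all match the paper's argument.
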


\begin{proof}
See Appendix \ref{appp}.
\end{proof}

\begin{lemma}
\label{lemga}
There exists positive constants $c^*$, $c_1$ and class $\mathcal{KC}_{\infty}$ functions $\zeta_{1}$ and $\zeta_{2}$ such that if the perturbation $\delta$ satisfies (\ref{89}), then for all solutions of the system satisfying (\ref{222}) the following holds 
\begin{eqnarray}
\zeta_{1}\left(\Pi(t),R\right)\leq \Gamma(t)\leq \zeta_{2}\left(\Pi(t),R\right).
\end{eqnarray}
\end{lemma}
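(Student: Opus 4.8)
The plan is to rewrite each of the five terms of $\Gamma(t)$ in (\ref{ghh}) in the delay variables $U,\dot U,\ddot U$ and then compare with $\Pi(t)$ in (\ref{defgadelnon}). The three terms built from the estimate $\hat u$ are handled directly by the explicit formula (\ref{est11}): the change of variable $\theta=t+\hat D(x-1)$ gives $\int_0^1\alpha^*\left(|\hat u(x,t)|\right)dx=\frac{1}{\hat D}\int_{t-\hat D}^t\alpha^*\left(|U(\theta)|\right)d\theta$, $\int_0^1\hat u_x(x,t)^2dx=\hat D\int_{t-\hat D}^t\dot U(\theta)^2d\theta$ and $\int_0^1\hat u_{xx}(x,t)^2dx=\hat D^3\int_{t-\hat D}^t\ddot U(\theta)^2d\theta$. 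The only nontrivial term is $\int_0^1 u_x(x,t)^2dx$. For it I would start from the representation (\ref{defrunon}), $u(x,t)=U\left(\phi\left(t+x\left(\sigma(t)-t\right)\right)\right)$ with $\phi$ as in (\ref{defpnon}), differentiate in $x$, and perform the change of variable $\tau=\phi\left(t+x\left(\sigma(t)-t\right)\right)$, which runs bijectively over $[\phi(t),t]$ as $x$ runs over $[0,1]$. Using $\sigma=\phi^{-1}$ together with (\ref{siinonnew}), so that $\phi'\left(\sigma(\tau)\right)=1-G(\tau)$, this produces $\int_0^1 u_x(x,t)^2dx=\left(\sigma(t)-t\right)\int_{\phi(t)}^t\dot U(\tau)^2\left(1-G(\tau)\right)d\tau$.

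Next I would establish uniform bounds on the two Jacobian factors. Shrinking $c_1$ and $c^*$ if necessary so that $c_1+\hat\mu(c^*)<\hat D/2$, conditions (\ref{89}) and (\ref{222}) give $|\delta(t,X(t))|\le c_1+\hat\mu\left(|X(t)|\right)<\hat D/2$, hence $\hat D/2<t-\phi(t)=\hat D+\delta(t,X(t))<3\hat D/2$. Moreover, under (\ref{89}) and (\ref{222}) Lemma~\ref{lemmanm} applies and yields (\ref{136}); since $P^*(\tau)=X\left(\sigma(\tau)\right)$, this forces $|G(\tau)|\le\left(c_1+\hat\mu\left(|P^*(\tau)|\right)\right)\left(1+\left|f\left(P^*(\tau),U(\tau)\right)\right|\right)<R<1$ for all $\phi(t)\le\tau\le t$. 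Therefore $1-G(\tau)\in(1-R,1+R)$ and, from (\ref{siinonnew}), $\sigma(t)-t=\int_{\phi(t)}^t\frac{ds}{1-G(s)}\in\left(\frac{\hat D}{2(1+R)},\frac{3\hat D}{2(1-R)}\right)$. Consequently there exist positive constants $m_1(R),m_2(R)$, continuous in $R\in(0,1)$, with $m_1(R)\int_{\phi(t)}^t\dot U(\tau)^2d\tau\le\int_0^1 u_x(x,t)^2dx\le m_2(R)\int_{\phi(t)}^t\dot U(\tau)^2d\tau$.

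It remains to reconcile the intervals of integration with the one in $\Pi$. Since $\phi(t)=t-\hat D-\delta(t,X(t))\ge t-\hat D-\max\{0,\delta(t,X(t))\}$ and also $t-\hat D\ge t-\hat D-\max\{0,\delta(t,X(t))\}$, both $[\phi(t),t]$ and $[t-\hat D,t]$ are contained in the interval over which $\Pi$ integrates $\dot U^2$, which at once gives the upper bound $\Gamma(t)\le C(R)\Pi(t)$ with $C(R)=\max\left\{1,\frac{1}{\hat D},m_2(R)+\hat D,\hat D^3\right\}$. For the lower bound I would split on the sign of $\delta(t,X(t))$: if $\delta(t,X(t))\ge0$ then $\max\{0,\delta(t,X(t))\}=\delta(t,X(t))$, so $[\phi(t),t]$ is exactly the $\dot U$-interval of $\Pi$ and the term $\int_0^1 u_x(x,t)^2dx$ alone bounds it below by $m_1(R)$ times the corresponding part of $\Pi$; if $\delta(t,X(t))<0$ then $\max\{0,\delta(t,X(t))\}=0$ and the term $\int_0^1\hat u_x(x,t)^2dx=\hat D\int_{t-\hat D}^t\dot U(\theta)^2d\theta$ equals $\hat D$ times that part of $\Pi$. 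Combining with the trivial lower bounds for the $|X|$, $\alpha^*(|\hat u|)$ and $\hat u_{xx}^2$ terms gives $\Gamma(t)\ge c(R)\Pi(t)$ with $c(R)=\min\left\{1,\frac{1}{\hat D},\hat D^3,m_1(R),\hat D\right\}>0$. Taking $\zeta_1(s,R)=c(R)s$ and $\zeta_2(s,R)=C(R)s$, which belong to $\mathcal{KC}_{\infty}$ since $c$ and $C$ are positive and continuous on $(0,1)$, completes the proof.

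I expect the main point to be the bookkeeping around the $\dot U$-term: one must observe that the factor $\max\{0,\delta(t,X(t))\}$ in the definition of $\Pi$ is precisely what makes the two-sided estimate hold with no sign restriction on $\delta$ --- the lower bound on $\int_{t-\hat D-\max\{0,\delta(t,X(t))\}}^t\dot U^2$ is recovered from the actual-actuator-state term $\int_0^1 u_x(x,t)^2dx$ when the perturbation retards the input ($\delta\ge0$) and from the nominal-estimate term $\int_0^1\hat u_x(x,t)^2dx$ when it advances it ($\delta<0$). The uniform Jacobian bounds and the continuity of the resulting gains in $R$ (needed for $\zeta_1,\zeta_2\in\mathcal{KC}_{\infty}$) are routine once Lemma~\ref{lemmanm} is invoked.
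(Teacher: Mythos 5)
Your proposal is correct and follows essentially the same route as the paper: invoke Lemma \ref{lemmanm} to obtain uniform bounds on $\sigma(t)-t$ and on the transport Jacobian (equivalently $1-G$, i.e.\ $\dot{\sigma}$), then pass between $\Gamma$ and $\Pi$ by the change of variables in (\ref{est11}) and (\ref{defrunon}). The only difference is that you spell out the bookkeeping around the $\max\{0,\delta(t,X(t))\}$ limit in (\ref{defgadelnon}) and the linear choice of $\zeta_1,\zeta_2$, details the paper declares ``immediate.''
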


\begin{proof}
Using Lemma \ref{lemmanm} and (\ref{invpnon}), (\ref{siinon}) we get that
\begin{eqnarray}
\frac{1}{\hat{D}+R}&\leq&\frac{1}{\sigma(t)-t}\leq\frac{1}{\hat{D}-R}\label{put1}\\
\frac{1}{1+R}&\leq&\dot{\sigma}(\theta)\leq\frac{1}{1-R},\quad \mbox{for all $\phi(t)\leq\theta\leq t$}\label{put}.
\end{eqnarray}
With relations (\ref{est11}), (\ref{defrunon}), (\ref{defgadelnon}), (\ref{ghh}) and applying the appropriate change of variables in the integrals the proof is immediate using (\ref{put1}), (\ref{put}).
\end{proof}

\begin*{{\it Proof of Theorem \ref{thmpernon}}:}
Using Lemma \ref{lemga} we conclude that (\ref{222}) is satisfied if $\Gamma(t)\leq \zeta_{1}\left(c^*,R\right)$, and hence, with Lemma \ref{lemag}, (\ref{222}) is satisfied if 
\begin{eqnarray}
V(t)\leq\alpha_{24}\left( \zeta_{1}\left(c^*,R\right),R\right),\label{bigas}
\end{eqnarray}
 is satisfied. Assume for the moment that (\ref{bigas}) is satisfied. With Lemmas \ref{lyapl}, \ref{lemag} and \ref{lemga}, relation (\ref{bigas}) is satisfied if $c^{**}$ in (\ref{normth}) is such that
\begin{eqnarray}
c^{**}\leq \bar{\zeta}_{2}\left(\alpha_{25}^{-1}\left(\alpha_{24}\left( \zeta_{1}\left(c^*,R\right),R\right)\right),R\right).
\end{eqnarray}
Using (\ref{Afth}), with some routine class $\mathcal{K}$ majorizations that involve Lemmas \ref{lemag} and \ref{lemga}, we get estimate (\ref{bouthm}).
\end*\quad

\section{Robustness to time-varying delay perturbation for linear systems}
\label{sectionlinear}
We consider the following special case of system (\ref{nonlper})
\begin{eqnarray}
\dot{X}(t)=AX(t)+BU\left(t-\hat{D}-\delta(t)\right),\label{sys1}
\end{eqnarray}
where for the rest of the section $\delta$ is a function only of the time $t$. For this linear case, the predictor-based controller is given explicitly as
\begin{eqnarray}
U(t)=Ke^{A\hat{D}}X(t)+\int_{t-\hat{D}}^{t}e^{A(t-\theta)}BU(\theta)d\theta.\label{con1m}
\end{eqnarray}

\begin{theorem}
\label{thmper}
Consider the closed-loop system consisting of the plant (\ref{sys1}), observer (\ref{obs1non})--(\ref{obs2non}), and control law (\ref{con1m}). There exists a positive $\delta_1$, such that if the perturbation $\delta$ satisfies
\begin{eqnarray}
|\delta(t)|+|\delta'(t)|&<&\delta_1,\quad \mbox{for all $t\geq t_0$}\label{case1},
\end{eqnarray}
then, the closed-loop system is exponentially stable, in the sense that there exist positive constants $R$ and $\lambda$ such that the following holds:
\begin{eqnarray}
\Pi_{\rm L}(t)&\leq& R\Pi_{\rm L}(t_0)e^{-\lambda (t-t_0)}\label{estt}\\
\Pi_{\rm L}(t)&=&\left|X(t)\right|^2+\int_{t-\hat{D}-\max\left\{0,\delta(t)\right\}}^tU(\theta)^2d\theta\nonumber\\
&&+\int_{t-\hat{D}}^t\dot{U}(\theta)^2d\theta.\label{defgadel}
\end{eqnarray}
\end{theorem}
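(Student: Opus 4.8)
The plan is to mirror the proof of Theorem~\ref{thmpernon}, but to exploit linearity so as to replace the class-$\mathcal K$ constructions of Lemmas~\ref{lyapl}--\ref{lemga} by an explicit quadratic Lyapunov functional and thereby upgrade the conclusion to a global, exponential one. First I would specialize Lemma~\ref{lemma1non} to the plant (\ref{sys1}): writing the closed loop in transport form $\dot X=AX+Bu(0,t)$, $u_t(x,t)=\pi(x,t)u_x(x,t)$, $u(1,t)=U(t)$, together with the observer (\ref{obs1non})--(\ref{obs2non}) and the backstepping transformation (\ref{back1non}) with $\hat p(x,t)=e^{A\hat Dx}X(t)+\hat D\int_0^x e^{A\hat D(x-y)}B\hat u(y,t)dy$. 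Since $f$ is linear the remainder (\ref{fti}) collapses to $\tilde f(t)=B\tilde u(0,t)$, the gains $r_1,\dots,r_5$ become explicit and uniformly bounded, and the second-difference terms $\tilde f_{\hat\rho}$, $\tilde f_{\hat u}$, $r_4$, $r_5$ vanish. The target system is then $\dot X=(A+BK)X+B\hat w(0,t)+B\tilde u(0,t)$, the transport equations (\ref{tilw2})--(\ref{tilw33}) and (\ref{trr1non})--(\ref{sys2ntr11non}) for $\hat w$ and $\hat w_x$ driven by $\tilde u(0,t)$, and the observer-error equations (\ref{obs3non1})--(\ref{tilu2}) for $\tilde u$, whose forcing $r$ equals $\dot U(t+\hat D(x-1))=\hat D^{-1}\hat w_x(x,t)+K\bigl(A\hat p(x,t)+B\hat u(x,t)\bigr)$ times $1-\hat D\pi(x,t)$; note $\hat p$, hence $\hat u=\hat w+K\hat p$, is a bounded Volterra linear functional of $\bigl(X(t),\hat w(\cdot,t)\bigr)$, so $r$ is a bounded functional of the current state.

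The crucial structural point --- the reason global exponential stability survives --- is that for $\delta$ a function of $t$ only, $G$ in (\ref{sysstrexnon}) equals $\delta'(\sigma(s))$, so conditions (\ref{prhx}), (\ref{prhxnew}) reduce to $|\delta'|<1$ and $\hat D+\delta>0$, which hold under (\ref{case1}) as soon as $\delta_1<\min\{1,\hat D\}$, \emph{independently of the size of the state}. Hence, from (\ref{defpinon}), (\ref{siinon}), (\ref{invpnon}), the speed $\pi(x,t)$ is uniformly bounded above and below and
\[
|1-\hat D\pi(x,t)|+|\dot\sigma(t)-1|+|\pi_x(x,t)|+\bigl|\tfrac{1}{\pi(1,t)}-\hat D\bigr|\le\bar c(\delta_1),
\]
with $\bar c(\delta_1)\to0$ as $\delta_1\to0$; in particular $\hat w_x(1,t)$ in (\ref{sys2ntr11non}) is an $O(1)$ multiple of $\tilde u(0,t)$, while the boundary value (\ref{tilu2}) is $O(\bar c(\delta_1))$.

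Next I would take the quadratic Lyapunov functional
\[
V(t)=X(t)^{T}PX(t)+a_1\!\int_0^1\! e^{g_1x}\tilde u(x,t)^2dx+a_2\hat D\!\int_0^1\! e^{g_2x}\hat w(x,t)^2dx+a_3\hat D\!\int_0^1\! e^{g_3x}\hat w_x(x,t)^2dx,
\]
with $P=P^{T}>0$ solving $(A+BK)^{T}P+P(A+BK)=-I$ (Assumption~\ref{ass2} makes $A+BK$ Hurwitz), augmented by a $\int e^{g_4x}\hat w_{xx}(x,t)^2dx$ term as in Lemma~\ref{lyapl} should absorbing $r_x$ require it. Differentiating along solutions: the $X$-part gives $-|X|^2+2X^{T}PB\bigl(\hat w(0,t)+\tilde u(0,t)\bigr)$; each transport integral, after one integration by parts using $\hat w(1,t)=0$, $\tilde u(1,t)=0$, yields a negative interior term $-\lambda_i\!\int e^{g_ix}(\cdot)^2dx$ plus a \emph{negative} boundary term at $x=0$ ($-a_1\pi(0,t)\tilde u(0,t)^2$, $-\tfrac{a_2}{2}\hat w(0,t)^2$, $-\tfrac{a_3}{2}\hat w_x(0,t)^2$) and, for the $\hat w_x$-integral, an $O(1)$ multiple of $\tilde u(0,t)^2$ at $x=1$ coming from $\hat w_x(1,t)=-r_1(1,t)\tilde f$. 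Choosing the $g_i$ large, then $a_2$ large enough to dominate the $\hat w(0,t)^2$ cross term, then $g_1$ large, then $a_1$ large enough that $-a_1\pi(0,t)\tilde u(0,t)^2$ dominates the remaining $O(1)$ and $O(a_2,a_3)$ multiples of $\tilde u(0,t)^2$, reproduces the nominal ($\delta=0$, $\tilde u\equiv0$) bound $\dot V\le-\lambda_0V$. Every term not so accounted for is generated by the perturbation --- the forcing $(1-\hat D\pi)[\hat D^{-1}\hat w_x+K(A\hat p+B\hat u)]$ in the $\tilde u$-equation, the terms $r\pi_x$, $r_x$ and the boundary value (\ref{tilu2}) in the $\tilde u_x$-equation (if this state is used), and the deviation of $\pi$ from $\hat D^{-1}$ throughout --- and, by the preceding paragraph together with $\|\hat p\|,\|\hat u\|\le C(|X|+\|\hat w\|)$ and Young's inequality, each such term is bounded by $\bar c(\delta_1)\,c_2V$. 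Thus $\dot V\le-(\lambda_0-\bar c(\delta_1)c_2)V$, and choosing $\delta_1$ so small that $\bar c(\delta_1)c_2\le\lambda_0/2$ gives $V(t)\le V(t_0)e^{-\frac{\lambda_0}{2}(t-t_0)}$.

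It remains to relate $V$ and $\Pi_{\rm L}$. Since everything is quadratic no norm is lost: using (\ref{est11}), i.e. $\hat u(x,t)=U(t+\hat D(x-1))$, and (\ref{defrunon}) with the change of variables $\theta=\phi\bigl(t+x(\sigma(t)-t)\bigr)$ under the bounds (\ref{put1})--(\ref{put}) (now with $R$ replaced by a constant depending only on $\delta_1$), one obtains $\int_0^1(u^2+\hat u^2)dx\asymp\int_{t-\hat D-\max\{0,\delta(t)\}}^t U(\theta)^2d\theta$ and $\|\hat u_x\|^2\asymp\int_{t-\hat D}^t\dot U(\theta)^2d\theta$; combining with the bounds $\|\hat p\|,\|\hat u\|\le C(|X|+\|\hat w\|)$ and $\|\hat u_x\|\le C(|X|+\|\hat w\|+\|\hat w_x\|)$ and their reverses (all from the Volterra structure of (\ref{defprnon})) gives $m_1\Pi_{\rm L}(t)\le V(t)\le m_2\Pi_{\rm L}(t)$ with $m_1,m_2>0$ depending only on $A,B,K,\hat D,\delta_1$. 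Then (\ref{estt}) follows with $R=m_2/m_1$ and $\lambda=\lambda_0/2$. I expect the main obstacle to be the bookkeeping in $\dot V$: checking that \emph{every} perturbation-generated term is genuinely an $O(\bar c(\delta_1))$ multiple of the quadratic forms in $V$ --- in particular $r_x$, $\pi_x$, and the non-vanishing boundary values at $x=1$ --- and that the hierarchy of weights $g_i,a_i$ can be fixed \emph{before} $\delta_1$, so that the admissible $\delta_1$ is strictly positive.
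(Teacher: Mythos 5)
Your proposal is correct and follows essentially the same route as the paper: specialize the backstepping transformation and observer-error PDE to the linear case (the paper's Lemma \ref{lemena}), use a quadratic Lyapunov functional in $X$, $\tilde u$, $\hat w$, $\hat w_x$ whose perturbation-induced terms all carry factors like $1-\hat D\pi$ and $|\delta(\sigma(t))|$ that are $O(\delta_1)$ uniformly in the state (the paper's Lemmas \ref{lem3}--\ref{lemfour}, with $\dot V_{\rm L}\le -r_1V_{\rm L}+r_2\gamma(t)V_{\rm L}$), and then pass to $\Pi_{\rm L}$ through norm equivalences obtained from the Volterra structure of the transformations and the change of variables in the transport states (Lemmas \ref{lem4}--\ref{lem5}). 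The only differences are cosmetic (exponential versus $(1+x)$ weights, and absorbing $\pi_x$ via its $O(\delta_1)$ size rather than via the paper's choice of a large weight $b$ in (\ref{axam})), and your stated worry about fixing the weights before $\delta_1$ is resolved exactly as you anticipate, since the nominal decay rate and the perturbation coefficients are determined independently of $\delta_1$ once $\delta_1<\min\{1,\hat D\}$ is imposed.
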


The proof of Theorem \ref{thmper} is based on the application of Lemmas \ref{lemma1non}, \ref{lyapl}, \ref{lemag} and \ref{lemga} for the special case of plant (\ref{sys1}) and the special perturbation $\delta$ as in (\ref{sys1}). However, we give each of these lemmas specialized to the present case for two reasons. Firstly, in the special case of linear systems with only time-varying perturbation, we study stability of the closed-loop system in the $H_1$ norm. This is a consequence of the fact that when $\delta$ does not depend on the state, the conditions (\ref{prhx}), (\ref{prhxnew}) are satisfied without restricting the supremum norm of the real actuator state $U(\theta)$, for all $\phi(t)\leq\theta\leq t$. Secondly, in the linear case the control law, as well as the direct and inverse backstepping transformations are given explicitly and are globally well-defined. 

When (\ref{case1}) does not hold, one can still derive exponential stability of the closed-loop system, by imposing other conditions on the perturbation $\delta$, such us the ones from \cite{khalil} Chapter 9.3. However, in this case one has to guarantee in addition that the propagation speed $\pi$ is still uniformly bounded from above and below and strictly positive. Therefore, we make the following assumptions which $\delta$ has to a priori satisfy. 

\begin{assumption} 
\label{as1}
The perturbation $\delta$ satisfies
\begin{eqnarray}
\delta'(t)<1,\quad \mbox{for all $t\geq t_0$},
\end{eqnarray}
and is such that
\begin{eqnarray}
\pi_1^*&=&\frac{1}{\sup_{\theta\geq\sigma(t_0)}(1-\delta'(\theta))}>0\label{ass1bnew}.
\end{eqnarray}
\end{assumption}

\begin{assumption} 
\label{as2}
The perturbation $\delta$ satisfies
\begin{eqnarray}
\hat{D}+\delta(t)>0,\quad \mbox{for all $t\geq t_0$},
\end{eqnarray}
and is such that
\begin{eqnarray}
\pi_0^*&=&\frac{1}{\sup_{\theta\geq\sigma(t_0)}(\hat{D}+\delta(\theta))}>0.
\end{eqnarray}
\end{assumption}

\begin{theorem}
\label{theorem3}
Assume that $\delta$ satisfy Assumptions \ref{as1}, \ref{as2}. There exist positive $\delta_2$ and $\delta_3$ such that if the perturbation $\delta$ satisfies either
\begin{eqnarray}
\int_0^{\infty}\left(\left|\delta'(\theta)\right|+\left|\delta(\theta)\right|\right)d\theta&\leq& \delta_2\label{case2},
\end{eqnarray}
or
\begin{eqnarray}
|\delta(t)|+|\delta'(t)|&\to&0,\quad \mbox{when $t\to\infty$} \label{case3},
\end{eqnarray}
or
\begin{eqnarray}
\frac{1}{\Delta}\int_t^{t+\Delta}\left(\left|\delta'(\theta)\right|+\left|\delta(\theta)\right|\right)d\theta&\leq& \delta_3\quad \mbox{for all $t\geq T$},\label{case4}
\end{eqnarray}
for some positive $\Delta$ and nonnegative $T$, then, the closed-loop system consisting of the plant (\ref{sys1}), observer (\ref{obs1non})--(\ref{obs2non}), and control law (\ref{con1m}) is exponentially stable, in the sense that there exist positive constants $R$ and $\lambda$ such that the following holds:
\begin{eqnarray}
\Pi_{\rm L}(t)&\leq& R\Pi_{\rm L}(t_0)e^{-\lambda (t-t_0)}\\
\Pi_{\rm L}(t)&=&\left|X(t)\right|^2+\int_{t-\hat{D}-\max\left\{0,\delta(t)\right\}}^tU(\theta)^2d\theta\nonumber\\
&&+\int_{t-\hat{D}}^t\dot{U}(\theta)^2d\theta.\label{defgadel2case}
\end{eqnarray}
\end{theorem}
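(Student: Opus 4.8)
\emph{Proof strategy for Theorem~\ref{theorem3}.} The plan is to re-run the proof of Theorem~\ref{thmper} --- i.e.\ the linear specializations of Lemmas~\ref{lemma1non}, \ref{lyapl}, \ref{lemag} and \ref{lemga} --- but, instead of absorbing the contribution of the delay perturbation into the dissipation rate as in (\ref{Afth}), to keep it as an explicit time-varying factor multiplying the Lyapunov function, and then to close the argument with three comparison-lemma estimates, one for each of (\ref{case2}), (\ref{case3}) and (\ref{case4}), in the spirit of \cite{khalil} Chapter~9.3.

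First I would set up the linear version of the machinery: with $\kappa(t,\xi)=K\xi$ the direct and inverse backstepping transformations (\ref{back1non})--(\ref{invdefprnon}) are linear and globally defined, $S^*(X)=X^TPX$ with $P$ solving the Lyapunov equation for $A+BK$, and $\alpha^*(s)=s^2$; since $\delta$ does not depend on the state, the role played by Lemma~\ref{lemmanm} is taken over by Assumptions~\ref{as1} and \ref{as2}, which (reinforced, where needed, by the smallness in (\ref{case2})--(\ref{case4})) guarantee that the propagation speed $\pi$ in (\ref{defpinon}) is uniformly bounded above and below by positive constants and that $|\delta|+|\delta'|$ is uniformly bounded --- here $\sigma(t)-t=\hat{D}+\delta(\sigma(t))$ and $\dot\sigma(t)=1/(1-\delta'(\sigma(t)))$. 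Carrying out the same integrations by parts, Young and Cauchy--Schwarz estimates that give Lemma~\ref{lyapl}, but now \emph{without} requiring $\delta$ to be small, I expect to obtain, in place of (\ref{Afth}),
\begin{eqnarray}
\dot V(t)&\le& -\bigl(\lambda_0-c_0\,\gamma(t)\bigr)V(t),\nonumber\\
\gamma(t)&=&|\delta(\sigma(t))|+|\delta'(\sigma(t))|,\nonumber
\end{eqnarray}
with $\lambda_0>0$ and $c_0>0$ depending only on the plant data and on the bounds for $\pi$. The point is that every perturbation-dependent term in (\ref{obs3non1})--(\ref{sys2ntr11nonxx}) carries a factor --- $1-\hat{D}\pi$, $\pi_x$, or one of the $r_i$ --- that vanishes identically when $\delta\equiv0$ and is $O(\gamma)$ otherwise, while the observer error $\tilde u$ (which is $\equiv 0$ when $\delta\equiv0$) feeds the $\hat w$- and $X$-subsystems only through the cascade couplings already dominated in Lemma~\ref{lyapl}; linearity of $f$ moreover kills $\tilde f_{\hat\rho}$, $\tilde f_{\hat u}$ and the quadratic term $\tilde f^T r_5\tilde f$ in (\ref{sys2ntr11nonxx}).

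Next I would integrate this differential inequality. Because $\pi_1^*\le\dot\sigma(t)$ and $\dot\sigma$ is also bounded above, the substitution $\tau=\sigma(s)$ transfers each of (\ref{case2})--(\ref{case4}) (with the constants possibly rescaled) from $\delta$ to the shifted map $s\mapsto\gamma(s)$, so it suffices to control $\int_{t_0}^t\gamma$. Under (\ref{case2}) one gets $V(t)\le V(t_0)e^{c_0\int_{t_0}^{\infty}\gamma}e^{-\lambda_0(t-t_0)}$, which is exponential with $R$ absorbing $e^{c_0\int_{t_0}^{\infty}\gamma}$, once $\delta_2$ is small enough that $\int_{t_0}^\infty\gamma$ is finite and $\hat{D}+\delta$ stays positive. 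Under (\ref{case3}) pick $T^*$ with $c_0\gamma(s)\le\lambda_0/2$ for $s\ge T^*$; then $\dot V\le-(\lambda_0/2)V$ on $[T^*,\infty)$ while $V(t)\le V(t_0)e^{c_0\int_{t_0}^{T^*}\gamma}$ on $[t_0,T^*]$, and splicing the two pieces yields the exponential bound. Under (\ref{case4}), covering $[t_0,t]$ by windows of length $\Delta$ gives $\int_{t_0}^t\gamma\le\delta_3(t-t_0)+c_\delta$ with $c_\delta$ depending only on the transient $[t_0,T]$, hence $V(t)\le V(t_0)e^{c_0 c_\delta}e^{-(\lambda_0-c_0\delta_3)(t-t_0)}$, which decays provided $\delta_3<\lambda_0/c_0$ (take $\delta_3\le\lambda_0/(2c_0)$). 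In all three cases the bound on $V$ is turned into the asserted bound on $\Pi_{\rm L}$ (an $H_1$-type norm of the actuator history) by the linear specializations of Lemmas~\ref{lemag} and \ref{lemga}, which under Assumptions~\ref{as1}, \ref{as2} read $\underline{c}\,\Pi_{\rm L}(t)\le V(t)\le\overline{c}\,\Pi_{\rm L}(t)$.

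The main obstacle I anticipate is the derivation of the displayed differential inequality with a constant $c_0$ that is genuinely independent of the size of $\delta$: one must verify that after the integrations by parts every surviving perturbation term really comes with an explicit $O(\gamma)$ prefactor, the most delicate being the $x=1$ boundary term in (\ref{sys2ntr11nonxx}) and, within it, the $\pi(0,t)\tilde u_x(0,t)$ contribution. A related subtlety --- which is exactly what Assumptions~\ref{as1}--\ref{as2} are designed to handle --- is that under (\ref{case3})--(\ref{case4}) the perturbation is not pointwise small, so one must argue separately that $\pi$ stays uniformly positive and bounded along the whole trajectory before the comparison estimates can be invoked.
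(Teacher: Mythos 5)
Your proposal is correct and takes essentially the same route as the paper: the differential inequality $\dot V_{\rm L}(t)\le -r_1V_{\rm L}(t)+r_2\gamma(t)V_{\rm L}(t)$ you aim for is exactly the paper's Lemma \ref{lem3} (which holds under Assumptions \ref{as1}--\ref{as2} with $\pi_0^*,\pi_1^*$ in place of $\pi_0^{**},\pi_1^{**}$, without any smallness of $\delta$), and your three case-by-case comparison arguments---the change of variables transferring (\ref{case2}) to $\gamma$, the $T^*$-splitting for (\ref{case3}), and the window-covering for (\ref{case4})---are precisely what the paper does, partly by citing Lemmas B.6/B.8 of \cite{KKK} and Lemma 9.5 of \cite{khalil}, before converting the bound on $V_{\rm L}$ into the bound on $\Pi_{\rm L}$ via Lemmas \ref{lem4} and \ref{lem5}.
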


From Theorem \ref{theorem3} we observe that in the case of linear systems and when the perturbation depends only on time and not on the state, we prove robustness of global exponential stability of the predictor feedback under three alternative conditions on the delay perturbation rather than just restricting the magnitude of the delay and its rate. This is not possible in the case where the perturbation depends on the state because for the conditions (\ref{prhx}) and (\ref{prhxnew}) to be satisfied one has to necessarily restrict the magnitude of the perturbation and its rate such that they are both sufficiently small.

We introduce now the backstepping transformation of the estimated actuator state.

\begin{lemma}
\label{lemena}
Consider the backstepping transformation 
\begin{eqnarray}
\hat{w}(x,t)&=&\hat{u}(x,t)-Ke^{A\hat{D}x}X(t)\nonumber\\
&&-\hat{D}K\int_0^xe^{A\hat{D}(x-y)}B\hat{u}(y,t)dy,\label{bck1}
\end{eqnarray}
together with its inverse given by
\begin{eqnarray}
\hat{u}(x,t)&=&\hat{w}(x,t)+Ke^{(A+BK)\hat{D}x}X(t)\nonumber\\
&&+\hat{D}K\int_0^xe^{(A+BK)\hat{D}(x-y)}B\hat{w}(y,t)dy.\label{bckin}
\end{eqnarray}
System (\ref{sys1}) together with the control law (\ref{con1m}) can be represented as
\begin{eqnarray}
\dot{X}(t)&=&\left(A+BK\right)X(t)+B\hat{w}(0,t)+B\tilde{u}(0,t)\label{sys2tr}\\
\hat{D}\hat{w}_t(x,t)&=&\hat{w}_x(x,t)-\hat{D}Ke^{A\hat{D}x}B\tilde{u}(0,t)\label{sys2new}\\
\hat{w}(1,t)&=&0,\label{sys2ntr}
\end{eqnarray}
where the observer error 
\begin{eqnarray}
\tilde{u}(x,t)=u(x,t)-\hat{u}(x,t),\label{obserr}
\end{eqnarray}
satisfies
\begin{eqnarray}
\tilde{u}_t(x,t)&=&\pi(x,t)\tilde{u}_x(x,t)-\left({1}-\hat{D}\pi(x,t)\right) r(x,t)\label{til1}\\
\tilde{u}(1,t)&=&0,\label{til2}
\end{eqnarray}
with
\begin{eqnarray}
\pi(x,t)&=&\frac{1+x\left(\dot{\sigma}(t)-1\right)}{\sigma(t)-t}\label{defpi}
\end{eqnarray}
\begin{eqnarray}
\phi(t)&=&t-\hat{D}-\delta(t)\label{defp}\\
\sigma(t)&=&\phi^{-1}(t)\nonumber\\
&=&t+\hat{D}+\delta(\sigma(t))\label{invp}\\
r(x,t)&=& \frac{1}{\hat{D}}  \hat{w}_x(x,t)+Ke^{(A+BK)\hat{D}x}(A+BK)X(t)\nonumber\\
&&+KB\hat{w}(x,t)+\hat{D}K(A+BK)\nonumber\\
&&\times\int_0^xe^{(A+BK)\hat{D}(x-y)}B\hat{w}(y,t)dy.\label{roo}
\end{eqnarray}
Furthermore,
\begin{eqnarray}
\hat{D}\hat{w}_{xt}(x,t)&=&\hat{w}_{xx}(x,t)-\hat{D}^2Ke^{A\hat{D}x}AB\tilde{u}(0,t)\label{trr1}\\
\hat{w}_x(1,t)&=&\hat{D}Ke^{A\hat{D}}B\tilde{u}(0,t).\label{sys2ntr11}
\end{eqnarray}
\end{lemma}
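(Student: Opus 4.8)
The plan is to mirror the proof of Lemma~\ref{lemma1non}, but now exploiting the fact that in the linear case the transformation, its inverse and the control law are explicit and globally defined, so that no restriction of norms is required. Four items have to be checked: (i)~that (\ref{bck1}) and (\ref{bckin}) are mutual inverses; (ii)~the target system (\ref{sys2tr})--(\ref{sys2ntr}); (iii)~the observer-error equations (\ref{til1})--(\ref{til2}) together with the identification of $r$ in (\ref{roo}); and (iv)~the once-differentiated identities (\ref{trr1})--(\ref{sys2ntr11}). For (i) I would note that (\ref{bck1}) is a Volterra transformation in $x$ (the integral runs from $0$ to $x$) with continuous matrix kernel, hence boundedly invertible, and then verify that $\hat{D}Ke^{(A+BK)\hat{D}(x-y)}B$ is the inverse kernel by substituting (\ref{bckin}) into (\ref{bck1}); the computation uses only that $e^{A\hat{D}x}$ commutes with $A$, that $e^{(A+BK)\hat{D}x}$ commutes with $A+BK$, and the identity $\frac{d}{dx}\bigl(e^{-A\hat{D}x}e^{(A+BK)\hat{D}x}\bigr)=\hat{D}\,e^{-A\hat{D}x}BK\,e^{(A+BK)\hat{D}x}$. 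This is the explicit, linear counterpart of the footnote after (\ref{invdefprnon}).

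For (ii), evaluating (\ref{bck1}) at $x=1$ with $\hat{u}(1,t)=U(t)$ and $\hat{u}(y,t)=U(t+\hat{D}(y-1))$ from (\ref{est11}), the substitution $\theta=t+\hat{D}(y-1)$ (so $\hat{D}\,dy=d\theta$, $\hat{D}(1-y)=t-\theta$) turns the right-hand side into $U(t)-Ke^{A\hat{D}}X(t)-\int_{t-\hat{D}}^t e^{A(t-\theta)}BU(\theta)\,d\theta$, which vanishes by the control law (\ref{con1m}); this yields (\ref{sys2ntr}). Next I would differentiate (\ref{bck1}) in $t$, replace $\hat{D}\hat{u}_t$ by $\hat{u}_x$ using (\ref{obs1non}), integrate the term $\int_0^x e^{A\hat{D}(x-y)}B\hat{u}_y(y,t)\,dy$ by parts, and subtract $\hat{w}_x$ obtained by differentiating (\ref{bck1}) in $x$; all the Volterra-integral terms cancel and one is left with $\hat{D}\hat{w}_t-\hat{w}_x=\hat{D}Ke^{A\hat{D}x}\bigl(AX(t)+Bu(0,t)-\dot{X}(t)\bigr)-\hat{D}Ke^{A\hat{D}x}B\tilde{u}(0,t)$. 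Since $\dot{X}(t)=AX(t)+BU(t-\hat{D}-\delta(t))=AX(t)+Bu(0,t)$ by (\ref{defrunon}) at $x=0$, and $u(0,t)=\hat{u}(0,t)+\tilde{u}(0,t)$ by (\ref{obserr}), the parenthesis vanishes and (\ref{sys2new}) follows; combining $\dot{X}=AX+Bu(0,t)$ with $\hat{u}(0,t)=\hat{w}(0,t)+KX(t)$, the value of (\ref{bckin}) at $x=0$, gives (\ref{sys2tr}).

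For (iii), the actual actuator state $u$ satisfies the transport equation $u_t=\pi u_x$, $u(1,t)=U(t)$, with $\pi$ as in (\ref{defpi}); this representation is imported verbatim from (\ref{gfgf})--(\ref{sys2nnon}) in the proof of Lemma~\ref{lemma1non}, whose derivation uses no nonlinearity, and it is meaningful because in this section $0<\dot\sigma(t)$ and $0<\sigma(t)-t$ (ensured by the smallness condition (\ref{case1}), or by Assumptions~\ref{as1} and~\ref{as2}). Subtracting $\hat{D}\hat{u}_t=\hat{u}_x$ gives $\tilde{u}_t=\pi\tilde{u}_x+\bigl(\pi-\tfrac{1}{\hat{D}}\bigr)\hat{u}_x=\pi\tilde{u}_x-\bigl(1-\hat{D}\pi\bigr)\tfrac{1}{\hat{D}}\hat{u}_x$ and $\tilde{u}(1,t)=U(t)-U(t)=0$; the remaining point is that $r$ in (\ref{roo}) is exactly $\tfrac{1}{\hat{D}}\hat{u}_x$, which follows by differentiating the inverse transformation (\ref{bckin}) in $x$ and dividing by $\hat{D}$ --- this reproduces (\ref{roo}) term by term. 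For (iv), differentiating (\ref{sys2new}) in $x$ with $\partial_x e^{A\hat{D}x}=\hat{D}A\,e^{A\hat{D}x}$ gives (\ref{trr1}), and since $\hat{w}(1,t)\equiv0$ forces $\hat{w}_t(1,t)=0$, evaluating (\ref{sys2new}) at $x=1$ gives $\hat{w}_x(1,t)=\hat{D}Ke^{A\hat{D}}B\tilde{u}(0,t)$, i.e.\ (\ref{sys2ntr11}). The main obstacle is not conceptual but the bookkeeping in the integration by parts of step~(ii) --- ensuring that every Volterra term cancels --- together with the careful transfer, from Section~\ref{sectionnonlinear}, of the transport-PDE representation of $u$ with propagation speed $\pi$.
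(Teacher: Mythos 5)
Your proposal is correct and follows essentially the same route as the paper: the paper likewise passes to the transport-PDE representation of the actuator state with speed $\pi$, verifies via (\ref{case1}) (or Assumptions \ref{as1}--\ref{as2}) that $\pi$ is a meaningful propagation speed, and then declares the remaining steps "algebraic manipulations" which it omits; your sketch simply carries out those manipulations (boundary condition via the change of variables $\theta=t+\hat{D}(y-1)$ and (\ref{con1m}), the integration by parts yielding (\ref{sys2new}), the identification $r=\tfrac{1}{\hat D}\hat u_x$ from differentiating (\ref{bckin}), and the differentiated identities), and they check out.
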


\begin{proof}
System (\ref{sys1}) can be re-written in the form
\begin{eqnarray}
\dot{X}(t)&=&AX(t)+Bu(0,t)\label{sys2}\\
u_t(x,t)&=&\pi(x,t)u_x(x,t)\\
u(1,t)&=&U(t),\label{sys2n}
\end{eqnarray}
where $\pi$ is defined in (\ref{defpi}). The predictor feedback (\ref{con1m}) can be written as
\begin{eqnarray}
U(t)=Ke^{A\hat{D}}X(t)+\hat{D}\int_0^{1}e^{A\hat{D}(1-y)}B\hat{u}(y,t)dy,
\end{eqnarray}
where the estimation of the unmeasured actuator state $U(\theta)$, for all $t-\hat{D}-\delta(t)\leq\theta\leq t$, $\hat{u}(x,t)$ is defined in (\ref{obs1non}), (\ref{obs2non}) and satisfies (\ref{est11}). With representation (\ref{sys2})--(\ref{sys2n}) for system (\ref{sys1}), the actuator state $u(x,t)$ is
\begin{eqnarray}
u(x,t)=U\left(\phi\left(t+x\left(\sigma(t)-t\right)\right)\right),\quad \mbox{for all $x\in[0,1]$},\label{defru}
\end{eqnarray}
or, incorporating $\delta$, $u(x,t)=U\left(t+x\left(\hat{D}+\delta(\sigma(t))\right)\right.$ $\left.-\hat{D}-\delta\left(t+x\left(\hat{D}+\delta(\sigma(t))\right)\right)\right)$ for all $x\in[0,1]$. Since the perturbation $\delta$ satisfies (\ref{case1}), it follows from definition (\ref{defpi}) and relations (\ref{defp}), (\ref{invp}) that $\sigma(t)-t>0$ and that ${1-\delta'(t)}>0$, for all $t\geq t_0$. Define the quantities 
\begin{eqnarray}
\pi_1^{**}&=&\frac{1}{\sup_{\theta\geq\sigma(t_0)}(1-\delta'(\theta))}\label{ass1b}\\
\pi_0^{**}&=&\frac{1}{\sup_{\theta\geq\sigma(t_0)}(\hat{D}+\delta(\theta))}.
\end{eqnarray}
From (\ref{case1}) it follows that $\sup_{\theta\geq\sigma(0)}(1-\delta'(\theta))<\infty$ and that $\sup_{\theta\geq\sigma(0)}(\hat{D}+\delta(\theta))<\infty$, and hence, $\pi_1^{**}>0$, $\pi_0^{**}>0$. Since $\sigma(t)-t=\hat{D}+\delta(\sigma(t))$ and $\dot{\sigma}(t)=\frac{1}{1-\delta'(\sigma(t))}$, using (\ref{case1}) we conclude that $\pi$ is positive and uniformly bounded from above and below. Hence, $\pi$ is a meaningful propagation speed. The rest of the proof is based on algebraic manipulations and it is omitted. 
\end{proof}



\begin{lemma}
\label{lem3}
There exist positive constants $r_1$ and $r_2$ such that the derivative of the Lyapunov function 
\begin{eqnarray}
V_{\rm L}(t)&=&X(t)^TPX(t)+b_1\int_0^1e^{bx}\tilde{u}(x,t)^2dx\nonumber\\
&&+\hat{D}b_2\int_0^1(1+x)\hat{w}(x,t)^2dx\nonumber\\
&&+\hat{D}b_2\int_0^1(1+x)\hat{w}_x(x,t)^2dx,\label{vfac}
\end{eqnarray}
along the solutions of (\ref{sys2tr})--(\ref{sys2ntr}), (\ref{til1})--(\ref{til2}), (\ref{trr1})--(\ref{sys2ntr11}) satisfies
\begin{eqnarray}
\dot{V}_{\rm L}(t)&\leq& -r_1V_{\rm L}(t)+r_2\gamma(t)V_{\rm L}(t),\label{vdottt}\\
\gamma(t)&=&\max\left\{\left|\delta\left(\sigma(t)\right)\right|,\right.\nonumber\\
&&\left.\left|\delta(\sigma(t))\left(1-\delta'(\sigma(t))\right)-\hat{D}\delta'(\sigma(t))\right|\right\}\label{starn}.
\end{eqnarray}
\end{lemma}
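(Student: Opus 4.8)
<br>

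The plan is to compute $\dot V_{\rm L}(t)$ term by term along the trajectories of the transformed system $(\ref{sys2tr})$--$(\ref{sys2ntr})$, $(\ref{til1})$--$(\ref{til2})$, $(\ref{trr1})$--$(\ref{sys2ntr11})$, and then organize the result into a ``clean'' negative part $-r_1 V_{\rm L}$ plus a ``perturbation'' part proportional to $\gamma(t)V_{\rm L}$. First I would handle the $X$-part: since $A+BK$ is Hurwitz, choose $P=P^T>0$ solving $P(A+BK)+(A+BK)^TP=-Q$ for some $Q>0$, so that the $X^TPX$ term contributes $-X^TQX$ plus cross terms $2X^TPB(\hat w(0,t)+\tilde u(0,t))$, which are absorbed by Young's inequality into a small multiple of $|X|^2$ plus multiples of $\hat w(0,t)^2$ and $\tilde u(0,t)^2$. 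Note that none of these $X$-terms carry a $\delta$ factor; the perturbation enters only through the transport-type PDEs.

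Next I would differentiate the three PDE integrals. For $b_1\int_0^1 e^{bx}\tilde u^2\,dx$, using $(\ref{til1})$ and integrating by parts in $x$, the leading term is $-b_1\pi(x,t)e^{bx}\tilde u^2$ evaluated and integrated; here the crucial point is that Lemma~\ref{lemena} (via Assumptions~\ref{as1}, \ref{as2} / the hypothesis $(\ref{case1})$) guarantees $\pi(x,t)$ is bounded below by a positive constant and above by a constant, so one gets a genuine negative $-\tilde u(0,t)^2$ boundary term and a negative bulk term $-\int_0^1 \tilde u^2$, modulo a bulk term $-(1-\hat D\pi)\tilde u\, r$ which, after substituting the definition $(\ref{roo})$ of $r$ and applying Young, produces terms bounded by $V_{\rm L}$ with coefficients that either are plain constants (absorbable into $-r_1 V_{\rm L}$ after scaling the $b_i$) or carry a factor of $1-\hat D\pi(x,t)$. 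Writing $1-\hat D\pi(x,t) = \frac{(\sigma(t)-t-\hat D) + x\hat D(1-\dot\sigma(t))(\ldots)}{\sigma(t)-t}$ and using $\sigma(t)-t=\hat D+\delta(\sigma(t))$, $\dot\sigma(t)=1/(1-\delta'(\sigma(t)))$, one sees that this factor is exactly of size $\gamma(t)$ as defined in $(\ref{starn})$ — this is where the two expressions inside the $\max$ come from (one from $\delta(\sigma(t))$ itself, one from the $x$-dependent numerator $\delta(\sigma(t))(1-\delta'(\sigma(t)))-\hat D\delta'(\sigma(t))$ after clearing denominators). For the $\hat w$ and $\hat w_x$ integrals, differentiating using $(\ref{sys2new})$, $(\ref{trr1})$ and integrating by parts with the weight $(1+x)$, the boundary terms at $x=1$ vanish or are controlled by $(\ref{sys2ntr})$, $(\ref{sys2ntr11})$ ($\hat w(1,t)=0$, $\hat w_x(1,t)=\hat D Ke^{A\hat D}B\tilde u(0,t)$), the weight $(1+x)$ yields a negative bulk term $-\hat D b_2\int_0^1(\hat w^2+\hat w_x^2)$, the boundary term at $x=0$ gives $-\hat D b_2\hat w(0,t)^2-\hat D b_2\hat w_x(0,t)^2$ which can be used to dominate the $\hat w(0,t)^2$ left over from the $X$-estimate, and the $-\hat D Ke^{A\hat D x}B\tilde u(0,t)$ and $-\hat D^2 Ke^{A\hat D x}AB\tilde u(0,t)$ forcing terms give cross terms in $\hat w\,\tilde u(0,t)$ and $\hat w_x\,\tilde u(0,t)$ that Young's inequality converts into a small constant times $\int_0^1(\hat w^2+\hat w_x^2)$ plus a constant times $\tilde u(0,t)^2$, both absorbable.

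Then I would collect everything: after fixing the ratios of $b_1,b_2$ (and the exponential rate $b$) large/small appropriately so that each absorbable ``constant'' coefficient is a genuinely small fraction of the corresponding negative coefficient, the non-$\gamma$ part of $\dot V_{\rm L}$ is bounded above by $-r_1\big(|X|^2+\int_0^1\tilde u^2 + \hat D\int_0^1(\hat w^2+\hat w_x^2)\big)\le -r_1 V_{\rm L}(t)$ for some $r_1>0$ (using that $P\asymp I$ and the exponential weights are bounded), while the remaining terms all carry a factor $1-\hat D\pi(x,t)$ or $1/(\sigma(t)-t)$-type quantities that are $O(\gamma(t))$ uniformly in $x$, hence are bounded by $r_2\gamma(t)V_{\rm L}(t)$. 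I expect the main obstacle to be the bookkeeping in the $\tilde u$-equation: one must be careful that the term $-(1-\hat D\pi(x,t))r(x,t)\tilde u(x,t)$, after inserting $(\ref{roo})$ which contains $\hat w_x/\hat D$, $X$, $\hat w$, and the convolution integral, does not generate an $O(1)$ (non-$\gamma$) coefficient in front of $\int_0^1\tilde u_x^2$ or $\hat w_x(x,t)^2$ — it does not, precisely because the dangerous coefficient is always multiplied by $1-\hat D\pi$, which vanishes when $\delta\equiv 0$; verifying this cleanly, and checking that the boundary term $\hat w_x(1,t)$ contributes no $O(1)$ loss, is the delicate part. The identification of the precise form of $\gamma(t)$ in $(\ref{starn})$ follows from tracking these $1-\hat D\pi(x,t)$ factors and the expression $\pi(x,t)=\frac{1+x(\dot\sigma(t)-1)}{\sigma(t)-t}$ through these estimates.
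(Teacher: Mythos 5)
Your proposal matches the paper's proof in all essentials: differentiate $V_{\rm L}$, use the Lyapunov equation for $A+BK$ with Young's inequality for the $\hat w(0,t)$, $\tilde u(0,t)$ cross terms, integrate the weighted transport terms by parts (choosing the exponential rate $b$ so that $b\pi+\pi_x$ stays positive), exploit the linearity of $\pi$ in $x$ to bound $|1-\hat D\pi(x,t)|$ by its values at $x=0,1$ — which is exactly where the two expressions in $\gamma(t)$ come from — and bound $\|r\|^2$ by $|X|^2+\|\hat w\|^2+\|\hat w_x\|^2$ before fixing $b_1,b_2$ to absorb the remaining constants. The only cosmetic slip is your mention of $\int_0^1\tilde u_x^2$, which does not appear in the linear-case functional $V_{\rm L}$; otherwise the argument is the paper's.
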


\begin{proof}
See Appendix \ref{lin1ap}.
\end{proof}

\begin{lemma}
\label{lemfour}
There exists a positive $\delta_1$ such that if the perturbation $\delta$ satisfies (\ref{case1}), then there exists a positive $\lambda$ such that $V$ in (\ref{vfac}) satisfies
\begin{eqnarray}
\dot{V}_{\rm L}(t)\leq -\lambda V_{\rm L}(t).
\end{eqnarray}
\end{lemma}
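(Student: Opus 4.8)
The plan is to obtain the estimate directly from Lemma \ref{lem3}, which already concentrates the entire dependence on the perturbation into the nonnegative scalar $\gamma(t)$ of (\ref{starn}): once one knows that $\sup_{t\geq t_0}\gamma(t)<r_1/r_2$, the bound $\dot{V}_{\rm L}(t)\leq-\left(r_1-r_2\gamma(t)\right)V_{\rm L}(t)$ immediately gives the claim with $\lambda=r_1-r_2\sup_{t\geq t_0}\gamma(t)>0$. Thus everything reduces to making $\gamma$ uniformly small by choosing $\delta_1$ small.

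The one point that needs care is that in (\ref{starn}) the perturbation is evaluated not at $t$ but at the predicted time $\sigma(t)$. First I would note that under (\ref{case1}) with $\delta_1<1$ the map $\phi$ in (\ref{defp}) is strictly increasing, so that $\sigma=\phi^{-1}$ is well defined (as in Lemma \ref{lemena}), and that $\sigma(t)-t=\hat{D}+\delta(\sigma(t))>0$ once $\delta_1<\hat{D}$; hence $\sigma(t)\geq t\geq t_0$ for all $t\geq t_0$, and therefore (\ref{case1}) may be invoked at the instant $\sigma(t)$, yielding $\left|\delta(\sigma(t))\right|+\left|\delta'(\sigma(t))\right|<\delta_1$ for every $t\geq t_0$. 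Estimating the two arguments of the maximum in (\ref{starn}) then gives $\left|\delta(\sigma(t))\right|\leq\delta_1$ and
\begin{eqnarray*}
\left|\delta(\sigma(t))\left(1-\delta'(\sigma(t))\right)-\hat{D}\delta'(\sigma(t))\right|&\leq&\left|\delta(\sigma(t))\right|\left(1+\left|\delta'(\sigma(t))\right|\right)+\hat{D}\left|\delta'(\sigma(t))\right|\\
&\leq&\delta_1\left(1+\delta_1+\hat{D}\right),
\end{eqnarray*}
so that $\gamma(t)\leq\delta_1\left(1+\delta_1+\hat{D}\right)=:\rho(\delta_1)$ uniformly in $t\geq t_0$, with $\rho$ continuous and $\rho(0)=0$.

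Substituting this bound into Lemma \ref{lem3} yields $\dot{V}_{\rm L}(t)\leq-\left(r_1-r_2\rho(\delta_1)\right)V_{\rm L}(t)$. Since $\rho(\delta_1)\to0$ as $\delta_1\to0^+$, I would finally fix $\delta_1>0$ small enough that simultaneously $\delta_1<\min\left\{1,\hat{D}\right\}$ (so that Lemma \ref{lemena}, and hence Lemma \ref{lem3}, remains applicable) and $r_2\rho(\delta_1)\leq r_1/2$; the conclusion then holds with $\lambda:=r_1-r_2\rho(\delta_1)\geq r_1/2>0$. I do not anticipate a genuine obstacle here: all the analytic work is already carried out inside Lemma \ref{lem3}, and the only thing requiring attention is the shift of the time argument to $\sigma(t)$ noted above, together with the (easily verified) fact that the constants $r_1,r_2$ furnished by Lemma \ref{lem3} arise from the nominal, $\delta$-independent part of the $\dot{V}_{\rm L}$ computation and hence do not deteriorate as $\delta_1$ shrinks.
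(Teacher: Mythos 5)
Your proposal is correct and follows essentially the same route as the paper's proof: bound $\gamma(t)$ by invoking (\ref{case1}) at the shifted time $\sigma(t)\geq t\geq t_0$, giving $\gamma(t)\leq\delta_1\left(1+\delta_1+\hat{D}\right)$, and then choose $\delta_1$ small relative to $r_1/r_2$ from Lemma \ref{lem3}. The only imprecision is your closing remark that $r_1,r_2$ come from a $\delta$-independent part of the computation---they in fact involve $\pi_0^{**}$, $\pi_1^{**}$, $\beta^*$ and $M_{2,\rm L}$, which depend on $\delta$---but, as the paper also notes, these quantities are uniformly controlled for all $\delta$ satisfying (\ref{case1}) with $\delta_1$ below a fixed preliminary bound (e.g.\ $\delta_1<\min\{1,\hat{D}\}/2$), so the choice of $\delta_1$ is not circular and your conclusion stands.
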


\begin{proof}
See Appendix \ref{fac1}
\end{proof}

\begin{lemma}
\label{lem4}
There exist positive constants $M_{5,\rm L}$ and $M_{6,\rm L}$ such that
\begin{eqnarray}
M_{5,\rm L}V_{\rm L}(t)&\leq&\Gamma_{\rm L}(t)\leq M_{6,\rm L}V_{\rm L}(t)\\
\Gamma_{\rm L}(t)&=&\left|X(t)\right|^2+\int_0^1u(x,t)^2dx+\int_0^1\hat{u}(x,t)^2dx\nonumber\\
&&+\int_0^1\hat{u}_x(x,t)^2dx.\label{defga}
\end{eqnarray}
\end{lemma}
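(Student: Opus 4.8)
\textit{Proof plan.} The strategy is to show that $V_{\rm L}$ and $\Gamma_{\rm L}$ are each equivalent, up to positive multiplicative constants, to the common energy
\[
\mathcal{E}_{\rm L}(t)=|X(t)|^2+\int_0^1\tilde{u}(x,t)^2dx+\int_0^1\hat{w}(x,t)^2dx+\int_0^1\hat{w}_x(x,t)^2dx ,
\]
and then to chain the two equivalences. That $V_{\rm L}$ is equivalent to $\mathcal{E}_{\rm L}$ is immediate: the matrix $P$ in (\ref{vfac}) is symmetric positive definite, so $\lambda_{\min}(P)|X|^2\le X^TPX\le\lambda_{\max}(P)|X|^2$, while on $[0,1]$ the weights obey $1\le e^{bx}\le e^{b}$ and $1\le 1+x\le 2$; hence $V_{\rm L}(t)$ lies between fixed positive multiples of $\mathcal{E}_{\rm L}(t)$. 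It therefore suffices to establish two-sided bounds between $\mathcal{E}_{\rm L}(t)$ and $\Gamma_{\rm L}(t)$.

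First I would pass between $\hat{w}$ and $\hat{u}$ using the explicit backstepping transformation (\ref{bck1}) and its inverse (\ref{bckin}). Equation (\ref{bck1}) writes $\hat{w}(x,t)$ as $\hat{u}(x,t)$ plus a linear operator of $X(t)$ and of $\{\hat{u}(y,t):y\in[0,x]\}$ whose kernels are the matrix exponentials $e^{A\hat{D}x}$, $e^{A\hat{D}(x-y)}$, uniformly bounded on $[0,1]$; differentiating (\ref{bck1}) in $x$ expresses $\hat{w}_x(x,t)$ as $\hat{u}_x(x,t)$ plus a similar bounded-kernel linear operator of $X(t)$, $\hat{u}(x,t)$ and $\{\hat{u}(y,t):y\in[0,x]\}$. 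Applying the Cauchy--Schwarz inequality to the integral terms together with $(a_1+\cdots+a_N)^2\le N(a_1^2+\cdots+a_N^2)$ gives, for suitable positive constants,
\[
\int_0^1\hat{w}^2dx\le c_1\Big(|X|^2+\int_0^1\hat{u}^2dx\Big),\qquad
\int_0^1\hat{w}_x^2dx\le c_2\Big(|X|^2+\int_0^1\hat{u}^2dx+\int_0^1\hat{u}_x^2dx\Big).
\]
Running the same argument on the inverse transformation (\ref{bckin}) and its $x$-derivative (whose kernels are $e^{(A+BK)\hat{D}x}$, $e^{(A+BK)\hat{D}(x-y)}$, again bounded on $[0,1]$) yields the reverse inequalities with $\hat{w}$ and $\hat{u}$ interchanged. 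Consequently $\int_0^1(\hat{w}^2+\hat{w}_x^2)dx$ and $\int_0^1(\hat{u}^2+\hat{u}_x^2)dx$ are each bounded by a fixed multiple of the other plus a fixed multiple of $|X|^2$.

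It remains to reconcile $u$ with $\tilde{u}$ and $\hat{u}$. Since $\tilde{u}=u-\hat{u}$ by (\ref{obserr}), Young's inequality gives $\int_0^1u^2dx\le 2\int_0^1\tilde{u}^2dx+2\int_0^1\hat{u}^2dx$ and $\int_0^1\tilde{u}^2dx\le 2\int_0^1u^2dx+2\int_0^1\hat{u}^2dx$. Combining this with the estimates of the previous paragraph, both $\mathcal{E}_{\rm L}(t)$ and $\Gamma_{\rm L}(t)$ are sandwiched between positive multiples of $|X|^2+\int_0^1\tilde{u}^2dx+\int_0^1\hat{u}^2dx+\int_0^1\hat{u}_x^2dx$; tracking the constants through the chain produces the claimed $M_{5,\rm L}$ and $M_{6,\rm L}$. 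I expect no serious obstacle here: because the plant is linear, the transformation (\ref{bck1})--(\ref{bckin}) is globally defined with uniformly bounded kernels, so the whole argument needs no smallness restriction on $\delta$ or on the solution --- which is exactly why Lemma \ref{lem4} carries no hypothesis. The only point requiring care is that the equivalence must hold in both directions, and this is guaranteed precisely by Lemma \ref{lemena}, which supplies both the direct transformation and its explicit inverse; the rest is routine Cauchy--Schwarz bookkeeping.
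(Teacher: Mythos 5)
Your proposal is correct and follows essentially the same route as the paper: the paper likewise derives two-sided bounds between the $(\hat{u},\hat{u}_x)$ and $(\hat{w},\hat{w}_x)$ norms (modulo $|X|^2$) from the explicit transformation (\ref{bck1}) and its inverse (\ref{bckin}) via Young and Cauchy--Schwarz, and combines them with the equivalence (\ref{bvv}) between $V_{\rm L}$ and the energy $\Xi$, which is your $\mathcal{E}_{\rm L}$ up to the weights. The only difference is cosmetic: you spell out the step $u=\tilde{u}+\hat{u}$ needed to handle the $\int_0^1 u^2dx$ term in $\Gamma_{\rm L}$, which the paper leaves implicit.
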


\begin{proof}
See Appendix \ref{fac2}.
\end{proof}

\begin{lemma}
\label{lem5}
There exist positive constants $M_{7,\rm L}$ and $M_{8,\rm L}$ such that
\begin{eqnarray}
M_{7,\rm L}\Gamma_{\rm L}(t)\leq\Pi_{\rm L}(t)\leq M_{8,\rm L}\Gamma_{\rm L}(t).
\end{eqnarray}
\end{lemma}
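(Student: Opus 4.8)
The plan is to pass between the two norms using the explicit delay-form representations of $\hat{u}$ and $u$ together with changes of variables in the spatial integrals. From (\ref{est11}), $\hat{u}(x,t)=U(t+\hat{D}(x-1))$, hence $\hat{u}_x(x,t)=\hat{D}\dot{U}(t+\hat{D}(x-1))$, and the substitution $\theta=t+\hat{D}(x-1)$ yields the exact identities $\int_0^1\hat{u}(x,t)^2dx=\frac{1}{\hat{D}}\int_{t-\hat{D}}^tU(\theta)^2d\theta$ and $\int_0^1\hat{u}_x(x,t)^2dx=\hat{D}\int_{t-\hat{D}}^t\dot{U}(\theta)^2d\theta$. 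Likewise, from (\ref{defru}), $u(x,t)=U\big(\phi(t+x(\sigma(t)-t))\big)$; the successive substitutions $y=t+x(\sigma(t)-t)$ and then $\theta=\phi(y)$ (so that $y=\sigma(\theta)$, $dy=\dot{\sigma}(\theta)d\theta$, and the limits become $\phi(t)$ and $\phi(\sigma(t))=t$) give $\int_0^1u(x,t)^2dx=\frac{1}{\sigma(t)-t}\int_{\phi(t)}^tU(\theta)^2\dot{\sigma}(\theta)d\theta$, where $\phi(t)=t-\hat{D}-\delta(t)$.

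I would then invoke the uniform two-sided bounds $0<\underline{m}\le\sigma(t)-t\le\overline{M}$ and $0<\underline{n}\le\dot{\sigma}(t)\le\overline{N}$ for all $t\ge t_0$. Exactly as in the proof of Lemma \ref{lemena}, these follow from $\sigma(t)-t=\hat{D}+\delta(\sigma(t))$ and $\dot{\sigma}(t)=1/(1-\delta'(\sigma(t)))$ together with the standing conditions on $\delta$---condition (\ref{case1}) in the setting of Theorem \ref{thmper}, or Assumptions \ref{as1} and \ref{as2} in the setting of Theorem \ref{theorem3}---which also guarantee that $\pi$ is a meaningful propagation speed. Consequently $\int_0^1u(x,t)^2dx$ is comparable, up to positive multiplicative constants, to $\int_{\phi(t)}^tU(\theta)^2d\theta$.

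The remaining step is bookkeeping on the integration windows. The key identity is $\min\{\phi(t),t-\hat{D}\}=t-\hat{D}-\max\{0,\delta(t)\}$, so that the window $[t-\hat{D}-\max\{0,\delta(t)\},t]$ appearing in $\Pi_{\rm L}$ equals $[\phi(t),t]\cup[t-\hat{D},t]$. For the lower bound $M_{7,\rm L}\Gamma_{\rm L}(t)\le\Pi_{\rm L}(t)$: since both $[\phi(t),t]$ and $[t-\hat{D},t]$ are contained in $[t-\hat{D}-\max\{0,\delta(t)\},t]$, each of $\int_0^1u^2$ and $\int_0^1\hat{u}^2$ is bounded by a constant times $\int_{t-\hat{D}-\max\{0,\delta(t)\}}^tU(\theta)^2d\theta$, while $\int_0^1\hat{u}_x^2=\hat{D}\int_{t-\hat{D}}^t\dot{U}^2d\theta$ and the $|X|^2$ terms coincide. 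For the upper bound $\Pi_{\rm L}(t)\le M_{8,\rm L}\Gamma_{\rm L}(t)$: split $\int_{t-\hat{D}-\max\{0,\delta(t)\}}^tU^2d\theta\le\int_{\phi(t)}^tU^2d\theta+\int_{t-\hat{D}}^tU^2d\theta$ and bound the two summands by constants times $\int_0^1u^2$ and $\hat{D}\int_0^1\hat{u}^2$, respectively, and use $\int_{t-\hat{D}}^t\dot{U}^2d\theta=\frac{1}{\hat{D}}\int_0^1\hat{u}_x^2$. Collecting the constants produces $M_{7,\rm L}$ and $M_{8,\rm L}$. There is no genuine obstacle here; the one point requiring care is matching the asymmetric window $[t-\hat{D}-\max\{0,\delta(t)\},t]$ with the union $[\phi(t),t]\cup[t-\hat{D},t]$, which is precisely why $\Pi_{\rm L}$ is defined with that lower limit.
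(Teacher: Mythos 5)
Your argument is correct and follows essentially the same route as the paper: express $\hat{u}$, $\hat{u}_x$ and $u$ in delay form via (\ref{est11}) and (\ref{defru}), change variables in the spatial integrals (yielding exactly the paper's identity $\int_0^1 u(x,t)^2dx=\frac{1}{\sigma(t)-t}\int_{\phi(t)}^{t}\dot{\sigma}(\theta)U(\theta)^2d\theta$), and use the uniform bounds on $\sigma(t)-t$ and $\dot{\sigma}$ coming from (\ref{case1}) or Assumptions \ref{as1}--\ref{as2} to extract $M_{7,\rm L}$, $M_{8,\rm L}$. The only difference is that you spell out the bookkeeping for the window $[t-\hat{D}-\max\{0,\delta(t)\},t]$, which the paper leaves implicit in its choice of constants.
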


\begin{proof}
From (\ref{est11}) we get $\hat{u}_x(x,t)\!=\!\hat{D}U'\!\left(t\!+\!\hat{D}(x-1)\right)$. Applying a change of variables in (\ref{defga}), with (\ref{defru}) we get
\begin{eqnarray}
\Gamma_{\rm L}(t)&=&\left|X(t)\right|^2+\frac{1}{\sigma(t)-t}\int_{t-\hat{D}-\delta(t)}^{t}\frac{1}{\phi'\left(\sigma(\theta)\right)}U(\theta)^2d\theta\nonumber\\
&&+\frac{1}{\hat{D}}\int_{t-\hat{D}}^tU(\theta)^2d\theta+\int_{t-\hat{D}}^t\dot{U}(\theta)^2d\theta.
\end{eqnarray}
Hence, the lemma is proved with $M_{7,\rm L}=\frac{1}{\max\left\{1,M_{\rm L}+\frac{1}{\hat{D}}\right\}}$, $M_{\rm L}=\frac{\sup_{t-\hat{D}-\delta(t)\leq\theta\leq t}\frac{1}{1-\delta'(\sigma(\theta))}}{\inf_{t\geq t_0}\left(\hat{D}+\delta(\sigma(t))\right)}$, $M_{8,\rm L}=\frac{1}{\min\left\{1,\pi_0^{**}\pi_1^{**},\frac{1}{\hat{D}}\right\}}$.
\end{proof}

\begin*{{\it Proof of Theorem \ref{thmper}}:}
Using Lemma \ref{lemfour} and the comparison principle (\cite{khalil}), we get $V_{\rm L}(t)\leq V_{\rm L}(t_0)e^{-\lambda (t-t_0)}$. With Lemmas \ref{lem4} and \ref{lem5} we get (\ref{estt}) with $R=\frac{M_{6,\rm L}M_{8,\rm L}}{M_{5,\rm L}M_{7,\rm L}}$.
\end*\qed

\begin*{{\it Proof of Theorem \ref{theorem3}}:}
We consider first the case where $\delta$ satisfies (\ref{case2}). Under Assumptions \ref{as1} and \ref{as2}, Lemmas \ref{lemena}, \ref{lem3}, \ref{lem4} and \ref{lem5} apply to this case as well (with $\pi_0^{**}$ and $\pi_1^{**}$ replaced by $\pi_0^{*}$ and $\pi_1^{*}$ respectively). The only difference with the proof of Theorem \ref{thmper} is in the proof of Lemma \ref{lemfour}. Towards that end, we solve (\ref{vdottt}) to get
\begin{eqnarray}
V_{\rm L}(t)\leq e^{-r_1(t-t_0)+r_2\int_{t_0}^t\gamma(\tau)d\tau}V_{\rm L}(t_0).
\end{eqnarray}
Consider that $\gamma(t)=\left|\frac{1}{\pi(0,t)}-\hat{D}\right|$. Applying the change of variables $\tau=\phi(\theta)$ in the integral and using the facts that $\sigma(t_0)>t_0$ and $\phi'(t)=1-\delta'(t)$ we get
\begin{eqnarray}
\int_{t_0}^{\infty}\gamma(\tau)d\tau\leq \frac{1}{\pi_1^*} \int_{t_0}^{\infty}\left|\delta\left(\theta\right)\right|d\theta\leq\frac{\delta_2}{\pi_1^*}.
\end{eqnarray}
Analogously, for the case $\gamma(t)=\left|\frac{1}{\pi(1,t)}-\hat{D}\right|$, we get
\begin{eqnarray}
\int_{t_0}^{\infty}\gamma(\tau)d\tau\leq \frac{\delta_2}{\pi_1^*}\left(\hat{D}+\frac{1}{\pi_1^*}\right).
\end{eqnarray}
Hence, Lemma \ref{lemfour} is proved with $r_2\delta_2<\min\left\{{\pi_1^*r_1}\right.,$ $\left.\frac{{\pi_1^*}^2r_1}{\left(1+{\pi_1^*}\hat{D}\right)}\right\}$ and the fact that $\int_{t_0}^t\gamma(\tau)d\tau\leq\int_{t_0}^{\infty}\gamma(\tau)d\tau$. Note that in the present case, Lemma \ref{lemfour} can be proved directly from relation (\ref{vdottt}) using Lemma B.6 in \cite{KKK}.

For the case where $\delta$ satisfies (\ref{case3}), Lemma \ref{lemfour} is proved by combining Lemma \ref{lem3}, Lemma B.8 in \cite{KKK} and the fact that $\gamma(t)$ satisfies
\begin{eqnarray}
\gamma(t)\leq \left(1+\frac{1}{\pi_1^*}+\hat{D}\right)\left( |\delta(\sigma(t))|+|\delta'(\sigma(t))|\right).
\end{eqnarray}  
 Finally, if $\delta$ satisfies (\ref{case4}), Lemma \ref{lemfour} is proved using Lemma 9.5 in \cite{khalil}.
\end*\qed

\section{Examples}
\label{secex}

\subsection{Control of a DC motor over a network}
We consider the following model of a field-controlled DC motor (\cite{slemon}) with  negligible shaft damping 
\begin{eqnarray}
\frac{d\omega(t)}{dt}&=&\theta i_{\rm f}(t)i_{\rm a}(t)\label{nomm1}\\
\frac{d{i}_{\rm a}(t)}{dt}&=&\!-bi_{\rm a}(t)\!+\!k\!-\!ci_{\rm f}(t)\omega(t)\\
\frac{d{i}_{\rm f}(t)}{dt}&=&\!-ai_{\rm f}(t)\!+\!U\left(t\!-\!\hat{D}\!-\!\rho(t,i_{\rm f}(t),i_{\rm a}(t),\omega(t))\right)\!,\label{nommn}
\end{eqnarray}
where $i_{\rm f}$, $i_{\rm a}$ are field and armature currents respectively, $\omega$ is angular velocity and $a$, $b$, $c$, $\theta$ are positive constants. The equilibria of the unforced system are $(\omega,i_{\rm a},i_{\rm f})=\left(\omega_0,\frac{k}{b},0\right)$ for some constant $\omega_0$. The system is feedback linearizable for $(\omega,i_{\rm a},i_{\rm f})\in D, D=$ $\left\{(\omega,i_{\rm a},i_{\rm f})\in\mathbb{R}^3|\mbox{$\omega>0$ and $i_{\rm a}\!>\!\frac{k}{2b}$} \right\}$. A delay-free design, based on full-state linearization, is (Chapter 13.3 in \cite{khalil})
\begin{eqnarray}
U(t)&=&\frac{1}{\gamma}\left(-K_1Z_1(t)-K_2Z_2(t)-K_3Z_3(t)-\alpha\right)\label{ex1c}\\
Z_1(t)&=&\theta i_{\rm a}(t)^2+c\omega(t)^2-\theta\frac{k^2}{b^2}-c\omega_0^2\\
Z_2(t)&=&2\theta i_{\rm a}(t)\left(k-bi_{\rm a}(t)\right)\\
Z_3(t)&=&2\theta\left(k-2bi_{\rm a}(t)\right)\left(-bi_{\rm a}(t)+k-ci_{\rm f}(t)\omega(t)\right)\\
\gamma&=&-2c\theta\left(k-2bi_{\rm a}(t)\right)\omega(t)\\
\alpha&=&2ca\theta\left(k-2bi_{\rm a}(t)\right)i_{\rm f}(t)\omega(t)-2b\theta\left(3k-4bi_{\rm a}(t)\right.\nonumber\\
&&\left.-2ci_{\rm f}(t)\omega(t)  \right)\left(-bi_{\rm a}(t)+k-ci_{\rm f}(t)\omega(t)\right)\nonumber\\
&&-2c\theta\left(k-2bi_{\rm a}(t)\right)i_{\rm f}(t)^2\omega(t).\label{exnc}
\end{eqnarray}  

Shifting the equilibrium $\left(\omega_0,\frac{k}{b},0\right)$ of the system to the origin and setting $X_1=\omega-\omega_0$, $X_2=i_{\rm a}-\frac{k}{b}$, $X_3=i_{\rm f}$, $\delta(t,X_1(t),X_2(t),X_3(t))=\rho(t,i_{\rm f}(t),i_{\rm a}(t),\omega(t))$ we get
\begin{eqnarray}
\dot{X}_1(t)&=&\theta X_2(t)X_3(t)+\frac{\theta k}{b} X_3(t)\\
\dot{X}_2(t)&=&-bX_2(t)-cX_3(t)X_1(t)-c\omega_0X_3(t)\\
\dot{X}_3(t)&=&-aX_3(t)\nonumber\\
&&+U\left(t-\hat{D}-\delta(t,X_1(t),X_2(t),X_3(t))\right).
\end{eqnarray}
The motor is controlled through a network that induces a constant delay $\hat{D}$ (e.g. \cite{dcf}). The known, constant delay, is subject to a time-varying perturbation due to the effect of transmission of control signals to other motors through the network. We further assume that the perturbation $\delta$ increases when the armature current increases. Define the estimated predictors of $X_1$, $X_2$ and $X_3$ as
\begin{eqnarray}
\hat{P}_1(t)&=&X_1(t)+\theta\int_{t-\hat{D}}^t\left(\hat{P}_2(s)\hat{P}_3(s)+\frac{k}{b}\hat{P}_3(s)\right)ds\label{prex1}\\
\hat{P}_2(t)&=&X_2(t)+\int_{t-\hat{D}}^t\biggl(-b\hat{P}_2(s)-c\hat{P}_1(s)\hat{P}_3(s)\nonumber\\
&&-c\omega_0\hat{P}_3(s)\biggr)ds\\
\hat{P}_3(t)&=&X_3(t)+\int_{t-\hat{D}}^t\left(-a\hat{P}_3(s)+U(s)\right)ds,\label{dum}
\end{eqnarray}
respectively. Setting in (\ref{ex1c})--(\ref{exnc}) $\omega=X_1+\omega_0$, $i_{\rm a}=X_2+\frac{k}{b}$, $i_{\rm f}=X_3$ and replacing $X_1$, $X_2$, $X_3$ by the predictors (\ref{prex1})--(\ref{dum}) we get the nominal predictor feedback.

We choose the set-point for the angular velocity of the motor as $\omega_0=2$, the nominal delay $\hat{D}=1$ and the parameters of the plant as $a=b=c=k=\theta=1$. The delay perturbation is $\delta\left(t,X_2(t)\right)=0.5\left(X_2(t)+\frac{k}{b}\right)^2+0.2\sin(t)^2$. The initial conditions for the plant and the actuator state are chosen as $X_1(0)=-1$, $X_2(0)=-0.2$, $X_3(0)=0.1$ and $U(\theta)=0$, $-1-0.5\left(X_2(0)+1\right)^2\leq\theta\leq 0$ respectively. The parameters of the controller are chosen as $K_1=-1$, $K_2=K_3=-3$, such as the linearizable, delay-free, system (i.e., the delay-free plant in the $Z$ coordinates) has three eigenvalues at $-1$, and the initial estimate of the actuator state as $U(\theta)=0$, $-1\leq\theta\leq0$.

In Fig. \ref{figi} we show the field and armature currents, and in Fig. \ref{figomega} the input voltage and the angular velocity of the motor. The nominal predictor feedback achieves local stabilization of the closed-loop system at the desired equilibrium, despite the presence of the perturbation. 

\begin{figure}[t]
\centering
		\includegraphics[width=.42\textwidth]{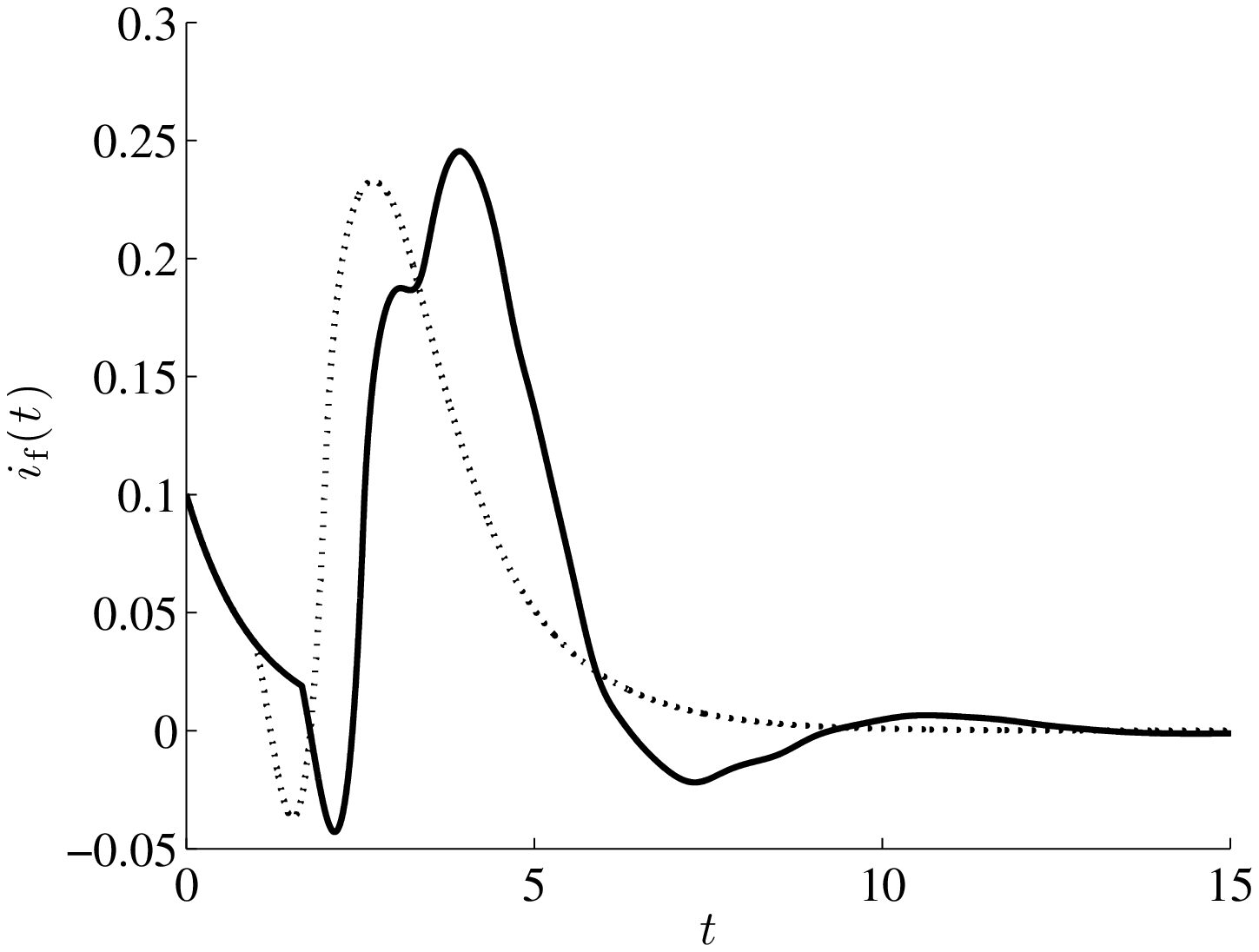}
				\includegraphics[width=.42\textwidth]{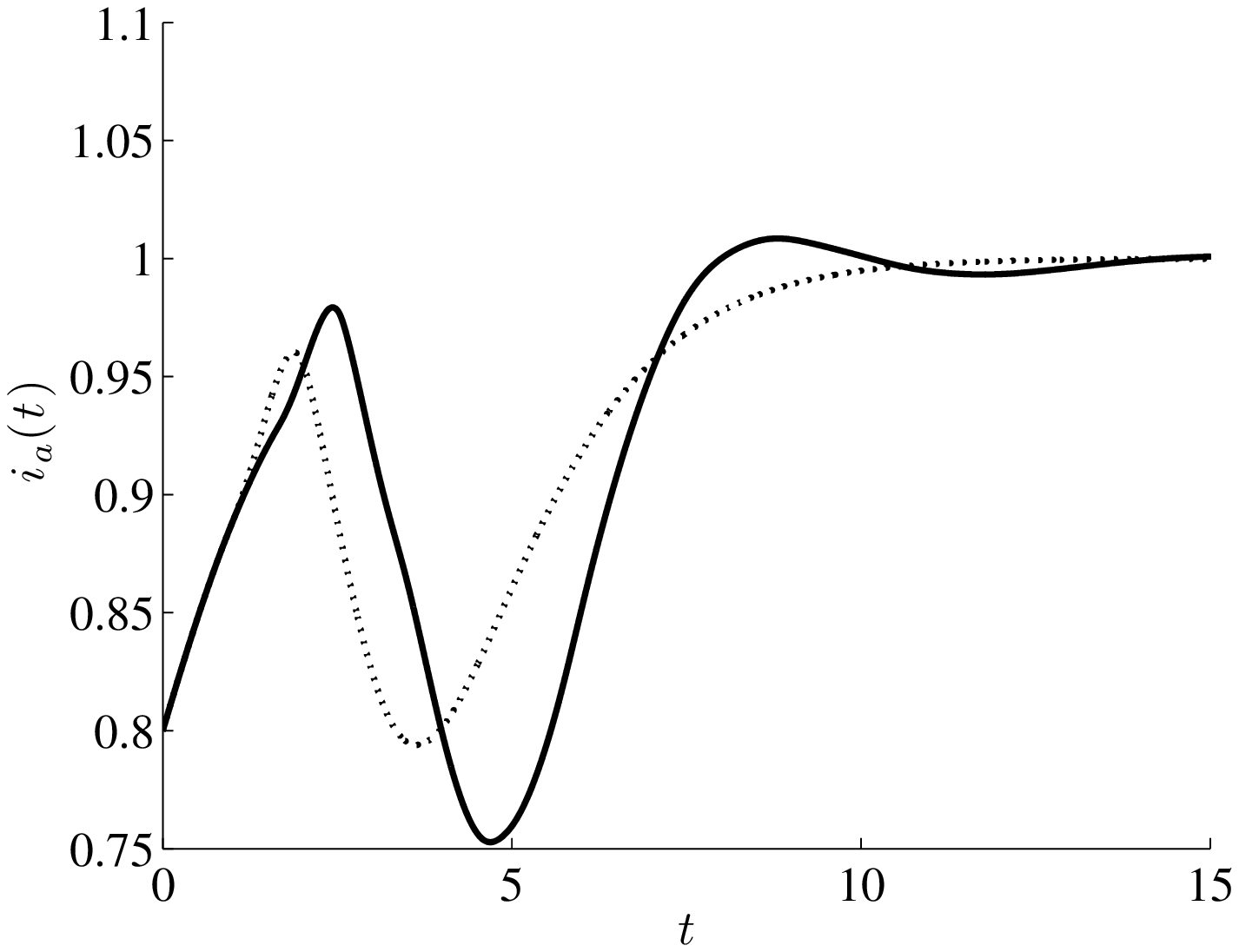}
						\caption{The field (top) and armature (bottom) currents for a network controlled DC motor modeled by (\ref{nomm1})--(\ref{nommn}) with the nominal predictor feedback, under an input delay perturbation $\delta\left(t,i_{\rm a}(t)\right)=0.5i_{\rm a}(t)^2+0.2\sin(t)^2$ (solid line) and $\delta(t,i_{\rm a}(t))=0$ (dotted line). The initial conditions are $i_{\rm f}(0)=0.1$, $i_{\rm a}(0)=0.8$, $\omega(0)=1$ and $U(\theta)=0$, $-1-\delta(0,i_{\rm a}(0))\leq\theta\leq0$.}			
						\label{figi}		
							\end{figure}
				\begin{figure}							
							\centering				
												
												\includegraphics[width=.42\textwidth]{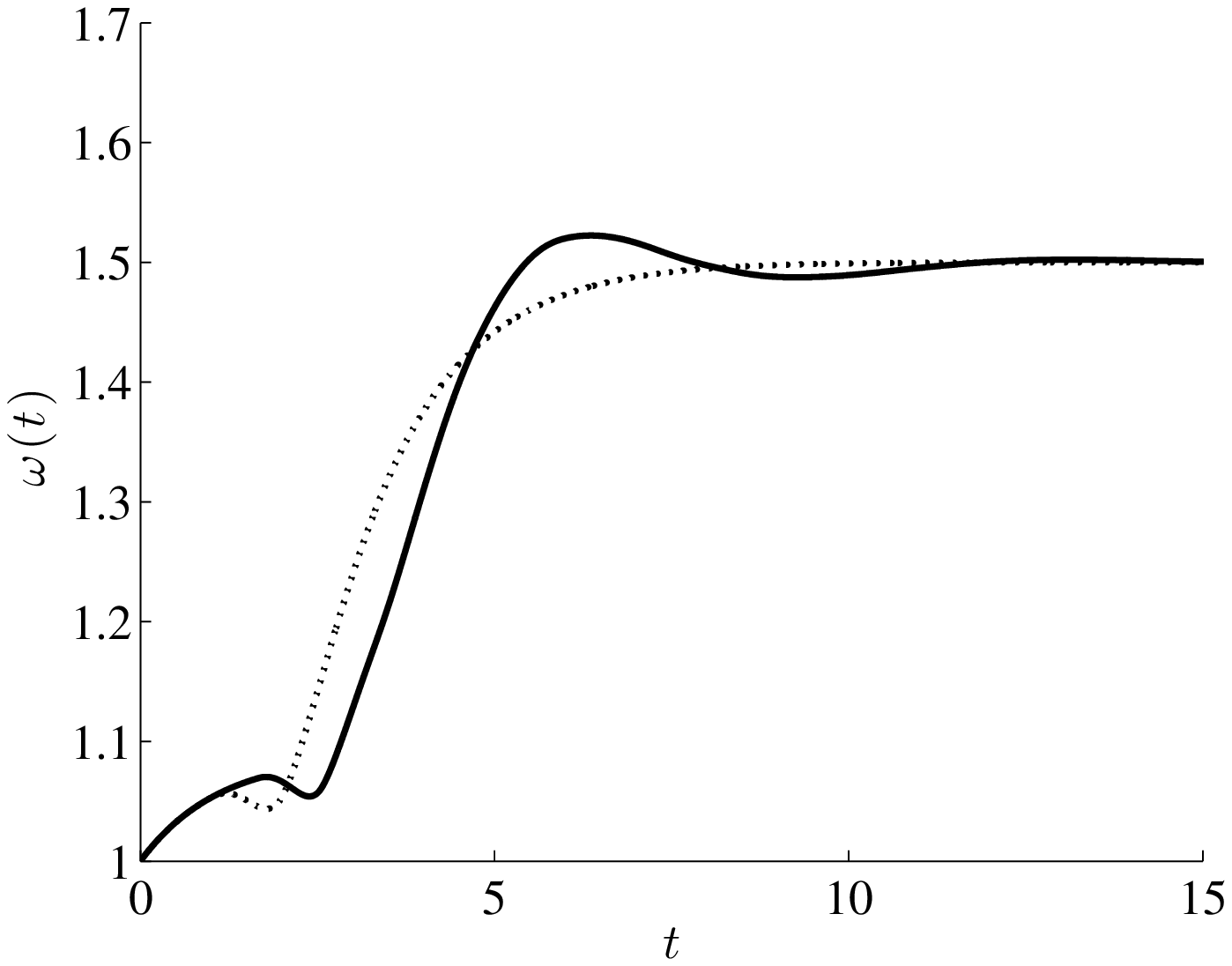}
														\includegraphics[width=.42\textwidth]{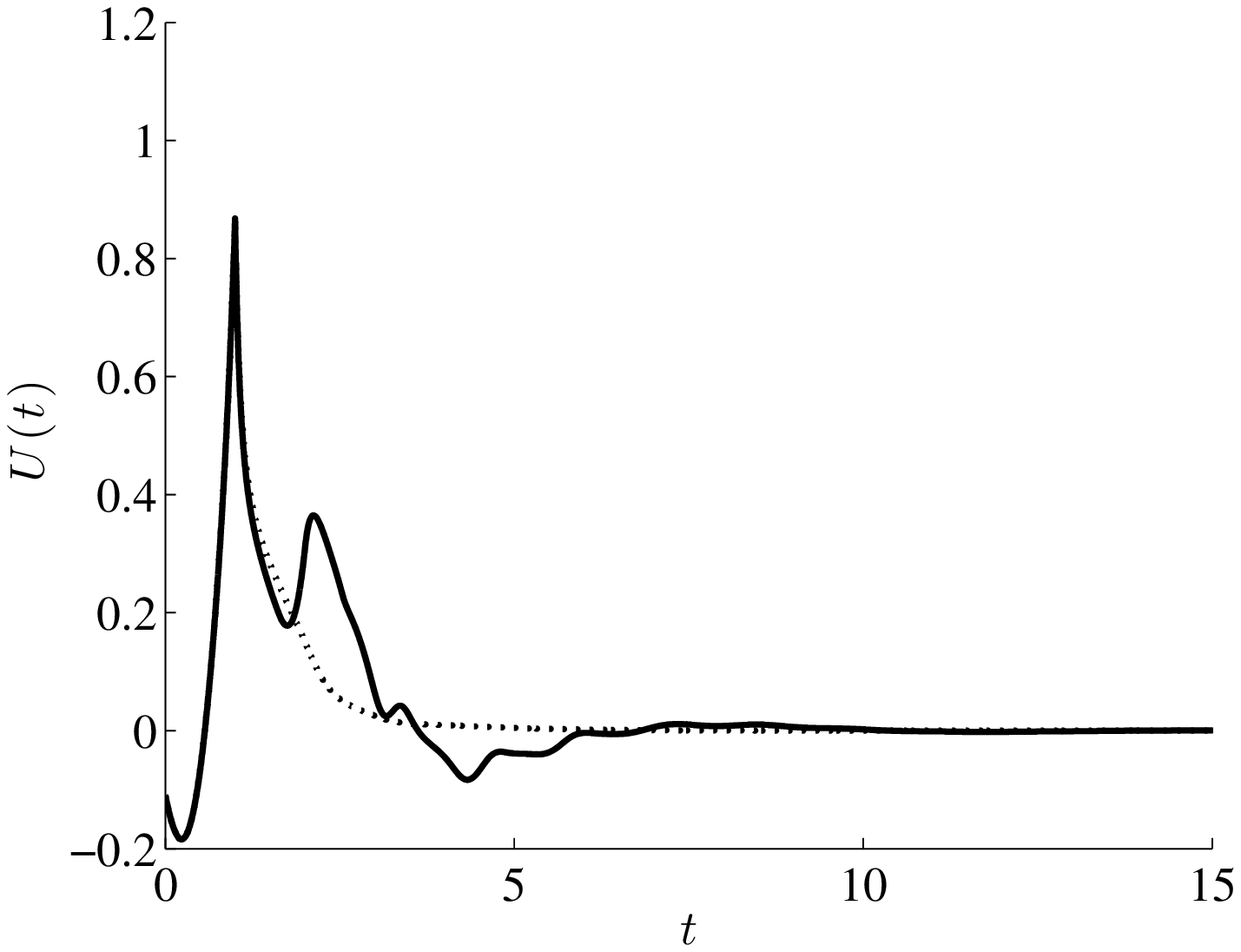}
\caption{The angular velocity (top) and the field voltage (bottom) for a network controlled DC motor modeled by (\ref{nomm1})--(\ref{nommn}) with the nominal predictor feedback, under an input delay perturbation $\delta\left(t,i_{\rm a}(t)\right)=0.5i_{\rm a}(t)^2+0.2\sin(t)^2$ (solid line) and $\delta(t,i_{\rm a}(t))=0$ (dotted line). The initial conditions are $i_{\rm f}(0)=0.1$, $i_{\rm a}(0)=0.8$, $\omega(0)=1$ and $U(\theta)=0$, $-1-\delta(0,i_{\rm a}(0))\leq\theta\leq0$.}
\label{figomega}
\end{figure}

\subsection{Bilateral teleoperation}
In bilateral teleoperation \cite{hoyakem1}, the operator (e.g. a human) controls a robotic system, called the master, at the one end of the communication network. The actions of the master  are transmitted (through the network) to another robotic system, called the slave, at the other end of the network. The goal of the control algorithm is the slave manipulator to behave (in a certain sense) as the master manipulator. A model of two robotic systems, each one having $n$ degrees of freedom, representing the master and the slave manipulators is written as (\cite{hoyakem1})
\begin{eqnarray}
\ddot{x}_{\rm m}(t)+\dot{x}_{\rm m}(t)&=&\tau_{\rm m}\left(t-\hat{D}-\delta(t)\right)\label{sysex1}\\
\ddot{x}_{\rm s}(t)+\dot{x}_{\rm s}(t)&=&\tau_{\rm s}\left(t-\hat{D}-2\delta(t)\right)\label{sysex2},
\end{eqnarray}
where $x_{\rm m}$, $x_{\rm s}$ $\in\mathbb{R}^n$ are the degrees of freedom of the robotic systems and the torques $\tau_{\rm m}$, $\tau_{\rm s}$ $\in \mathbb{R}^n$ are to be designed such as coordination between the master and the slave is achieved asymptotically, i.e., $x_{\rm m}-x_{\rm s}\to0$ as $t\to\infty$. The constant delay $\hat{D}$ represents the known, network-induced delay which is subject to time-varying perturbations that are often present due to congestion, distance etc. \cite{chopra}. For simplicity we assume scalar $x_{\rm m}$, $x_{\rm s}$, $\tau_{\rm m}$, $\tau_{\rm s}$ and we re-write (\ref{sysex1}), (\ref{sysex2}) as
\begin{eqnarray}
\dot{X}_1(t)&=&X_2(t)\\
\dot{X}_2(t)&=&U_1\left(t-\hat{D}-\delta(t)\right)\\
\dot{X}_3(t)&=&X_4(t)\\
\dot{X}_4(t)&=&U_2\left(t-\hat{D}-2\delta(t)\right),
\end{eqnarray}
where $X_1=x_{\rm m}$, $X_2=\dot{x}_{\rm m}$, $X_3=x_{\rm s}$, $X_4=\dot{x}_{\rm s}$, $U_1=\tau_{\rm m}$ and $U_2=\tau_{\rm s}$. A simple controller is (\cite{hoyakem1}) $\tau_{\rm m}(t)=-K_{\rm p}\left(x_{\rm m}(t)-x_{\rm s}(t)\right)-B_{\rm m}\dot{x}_{\rm m}(t)-K_{\rm p}(x_{\rm m}(t)-r)$ and $\tau_{\rm s}(t)=K_{\rm p}\left(x_{\rm m}(t)-x_{\rm s}(t)\right)-B_{\rm s}\dot{x}_{\rm s}(t)-K_{\rm p}\left(x_{\rm s}(t)-r\right)$, where $r$ is the set-point for the positions of the manipulators. The predictor-based version of this controller is
\begin{eqnarray}
U_1(t)&=&-K_{\rm p}\left(\hat{P}_1(t)-\hat{P}_3(t)\right)-B_{\rm m}\hat{P}_2(t)\nonumber\\
&&-K_{\rm p}\left(\hat{P}_1(t)-r\right)\label{pff1}\\
U_2(t)&=&K_{\rm p}\left(\hat{P}_1(t)-\hat{P}_3(t)\right)-B_{\rm s}\hat{P}_4(t)\nonumber\\
&&-K_{\rm p}\left(\hat{P}_3(t)-r\right)\\
\hat{P}_i(t)&=&X_i(t)+\int_{t-\hat{D}}^t\hat{P}_{i+1}(\theta)d\theta,\quad i=1,3\\
\hat{P}_j(t)&=&X_j(t)+\int_{t-\hat{D}}^tU_{\frac{j}{2}}(\theta)d\theta,\quad j=2,4.\label{pff2}
\end{eqnarray}
We choose the desired set-point for $x_{\rm m}$ and $x_{\rm s}$ as $r=2$, the parameters of the controller as $K_{\rm p}=B_{\rm m}=B_{\rm s}=2$, the known delay as $\hat{D}=1$, the initial condition of the plant as $x_{\rm m}(0)=\dot{x}_{\rm m}=\dot{x}_{\rm s}=0$, $x_{\rm s}(0)=1$, the initial actuator state as $U(\theta)=0$, $-\hat{D}-\delta(0)\leq\theta\leq0$ and the initial estimation of the actuator state as $U(\theta)=0$, $-\hat{D}\leq\theta\leq0$. We illustrate the robustness properties of the predictor feedback under a time-varying delay perturbation that is neither in $\mathcal{L}_1$ nor converges to zero as the time goes to infinity nor is small in magnitude. However, after some long period of time its mean is small. This disturbance is show in Fig. \ref{figdis}.

In Fig. \ref{figx} we show the difference between the positions of the master and the slave for the cases where either there is or there is not a perturbation $\delta$. In both cases, under the nominal predictor feedback the position of the slave tracks the position of the master. In Fig. \ref{figup} we show the torques applied to the two robotic systems under the perturbation $\delta$. The control efforts are oscillatory as a result of the effect of the oscillatory perturbation.

\begin{figure}[t]
\centering
		\includegraphics[width=.42\textwidth]{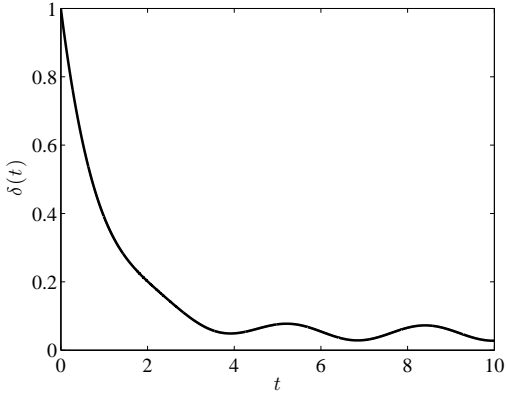}
		\caption{The delay perturbation $\delta$ satisfying $\dot{\delta}(t)=-\delta(t)+0.1\sin(t)^2, \delta(0)=1$ induced by the network in bilateral teleoperation. The model of the two robotic systems is (\ref{sysex1})--(\ref{sysex2}).}
		\label{figdis}
		\end{figure}
		\begin{figure}

												\includegraphics[width=.47\textwidth]{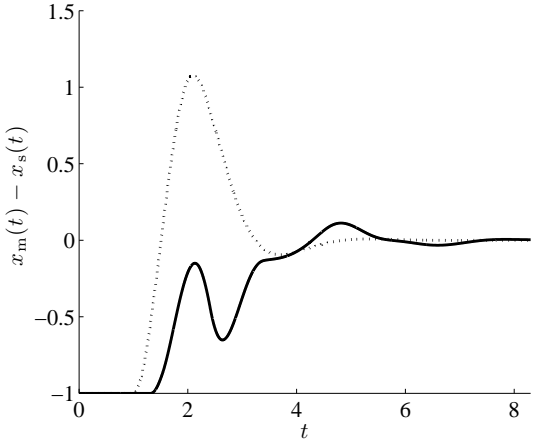}
\caption{The error between the position of the master and the slave manipulators for two robotic systems modeled by (\ref{sysex1})--(\ref{sysex2}). The two manipulators are coordinated through a network with the predictor feedback (\ref{pff1})--(\ref{pff2}), under an input delay perturbation $\delta(t)=0$ (dotted) and $\delta$ satisfying $\dot{\delta}(t)=-\delta(t)+0.1\sin(t)^2$, $\delta(0)=1$ (solid), induced by the network. The initial conditions are $x_{\rm m}(0)=0$, $x_{\rm s}(0)=1$, $\dot{x}_{\rm m}(0)=\dot{x}_{\rm s}(0)=0$ and $\tau_{\rm m}(\theta)=\tau_{\rm s}(\theta)=0$, $-1-\delta(0)\leq\theta\leq0$.}	
\label{figx}
\end{figure}



\begin{figure}[t]
																\includegraphics[width=.45\textwidth]{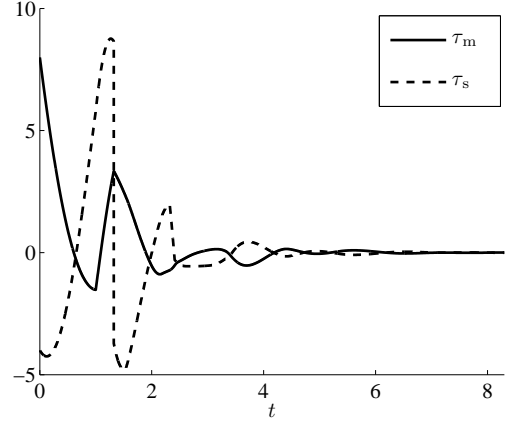}
\caption{The input torques of the master and the slave manipulators for two robotic systems modeled by (\ref{sysex1})--(\ref{sysex2}) coordinated through a network with the predictor feedback (\ref{pff1})--(\ref{pff2}), under an input delay perturbation $\delta$ satisfying $\dot{\delta}(t)=-\delta(t)+0.1\sin(t)^2$, $\delta(0)=1$, induced by the network. The initial conditions are $x_{\rm m}(0)=0$, $x_{\rm s}(0)=1$, $\dot{x}_{\rm m}(0)=\dot{x}_{\rm s}(0)=0$ and $\tau_{\rm m}(\theta)=\tau_{\rm s}(\theta)=0$, $-1-\delta(0)\leq\theta\leq0$.}
\label{figup}
\end{figure}

\section{Conclusions}
Looking into the details of the proofs, we note that in the case of nonlinear systems with state-dependent perturbations, there is a trade off between the achievable region of attraction and the size of the perturbation and its rate, at the origin. For linear systems under time-varying perturbations, global exponential stability holds, but the size of the perturbation and its rate should be appropriately restricted. With the available Lyapunov functional, our next step is to study the inverse optimal redesign problem of predictor feedback for nonlinear systems. 

One might raise the question of robustness to stochastic delay perturbations, since stochastic perturbations have some resemblances with the time-varying case. Yet, in our analysis we restrict not only the magnitude of the perturbation $\delta$ but also the magnitude of its derivative (which also guarantees the invertibility of $\phi=t-\hat{D}-\delta$), which can be unbounded in the case where $\delta$ is white noise, or even when $\delta$ is a low pass version of white noise.

\appendix{\bf Appendices}
\section{The perturbation signals of Lemma \ref{lemma1non}}
\label{secap1}
With $\hat{u}$ defined in (\ref{invback1non}) in terms of $\hat{\rho}$ and $\hat{w}$, the perturbation signals $r_1$, $r$, $r_2$, $r_3$, $r_4$ and $r_5$ are 
\begin{eqnarray}
r_1(x,t)&=&-\hat{D}\frac{\partial \kappa \left(t+\hat{D}x,\hat{\rho}(x,t)\right)}{\partial \hat{\rho}}\nonumber\\
&&\times e^{\hat{D}\int_0^x\frac{\partial f\left(\hat{\rho}(y,t),\hat{u}(y,t)\right)}{\partial \hat{\rho}}dy}\label{brr1}\\
r(x,t)&=& \frac{1}{\hat{D}}  \hat{w}_x(x,t)+\frac{\partial \kappa\left(t+\hat{D}x,\hat{\rho}(x,t)\right)}{\partial t}\nonumber\\
&&+\frac{\partial \kappa\left(t+\hat{D}x,\hat{\rho}(x,t)\right)}{\partial \hat{\rho}} f\left(\hat{\rho}(x,t),\hat{u}(x,t)\right)\label{roonon}\\
r_2(x,t)&\!\!=&\!\!-\hat{D}\left(\hat{D}\frac{\partial^2 \kappa \left(t+\hat{D}x,\hat{\rho}(x,t)\right)}{\partial \hat{\rho}\partial t}\right.\nonumber\\
&&\!\!\left.+f^T\left(\hat{\rho}(x,t),\hat{u}(x,t)\right)\frac{\partial^2 \kappa \left(t+\hat{D}x,\hat{\rho}(x,t)\right)}{\partial \hat{\rho}^2}\right)\nonumber\\
&&\!\!\times e^{\hat{D}\int_0^x\frac{\partial f\left(\hat{\rho}(y,t),\hat{u}(y,t)\right)}{\partial \hat{\rho}}dy}-\hat{D}^2\nonumber\\
&&\!\!\times\frac{\partial \kappa \left(t+\hat{D}x,\hat{\rho}(x,t)\right)}{\partial \hat{\rho}}\frac{\partial f\left(\hat{\rho}(x,t),\hat{u}(x,t)\right)}{\partial \hat{\rho}}\\
r_3(x,t)&=&r_{2_x}(x,t)\\
r_4(t)&=&\hat{D}^2\frac{\partial^2 \kappa \left(t+\hat{D},\hat{\rho}(1,t)\right)}{\partial t\partial \hat{\rho}}e^{\hat{D}\int_0^1\frac{\partial f\left(\hat{\rho}(x,t),\hat{u}(x,t)\right)}{\partial \hat{\rho}}dx}\nonumber\\
&&+\hat{D}^2f^T(\hat{\rho},\hat{u})\frac{\partial^2 \kappa \left(t+\hat{D},\hat{\rho}(1,t)\right)}{\partial^2 \hat{\rho}}\nonumber\\
&&\times e^{\hat{D}\int_0^1\frac{\partial f\left(\hat{\rho}(x,t),\hat{u}(x,t)\right)}{\partial \hat{\rho}}dx}+\frac{\partial \kappa \left(t+\hat{D},\hat{\rho}(1,t)\right)}{\partial \hat{\rho}}\nonumber\\
&&\times e^{\hat{D}\int_0^1\frac{\partial f\left(p(x,t),\hat{u}(x,t)\right)}{\partial p}dx}\hat{D}^2\nonumber\\
&&\times\Biggl(\int_0^1\frac{\partial^2 f\left(\hat{\rho}(x,t),\hat{u}(x,t)\right)}{\partial^2 \hat{\rho}}f(\hat{\rho}(x,t),\hat{u})dx\nonumber\\
&&+\hat{D}\int_0^1\frac{\partial^2 f\left(\hat{\rho}(x,t),\hat{u}(x,t)\right)}{\partial \hat{\rho}\partial \hat{u}}r(x,t)dx\Biggr)\\
r_5(t)&=& {e^{\hat{D}\int_0^1\frac{\partial f\left(\hat{\rho}(y,t),\hat{u}(y,t)\right)}{\partial \hat{\rho}}dy}}^T\frac{\partial^2 \kappa \left(t+\hat{D},\hat{\rho}(1,t)\right)}{\partial^2 \hat{\rho}}\nonumber\\
&&\times\hat{D}^3e^{\hat{D}\int_0^1\frac{\partial f\left(\hat{\rho}(x,t),\hat{u}(x,t)\right)}{\partial \hat{\rho}}dx},\label{brrn}
\end{eqnarray}
where the notation $\frac{\partial^2 f\left(\hat{\rho}(x,t),\hat{u}(x,t)\right)}{\partial^2 \hat{\rho}}f(\hat{\rho}(x,t),\hat{u}(x,t))$ corresponds to a matrix $Q=\left\{g_{i,j}\right\}_{1\leq i,j \leq n}$ with elements $q_{i,j}= \frac{\partial^2 f_i\left(\hat{\rho}(x,t),\hat{u}(x,t)\right)}{\partial \hat{\rho}_j\partial \hat{\rho}}    f(\hat{\rho}(x,t),\hat{u}(x,t))$, where $f=(f_1,\dots, f_n)^T$ and $\hat{\rho}=(\hat{\rho}_1,\ldots,\hat{\rho}_n)^T$.

\section{Technical Lemmas}
\label{appc}
\begin{lemma}
\label{lemdp}
The predictor $p$ in (\ref{defprnon}) satisfies
\begin{eqnarray}
\hat{D}\hat{p}_t(x,t)=\hat{p}_x(x,t)+\hat{D}e^{\hat{D}\int_0^x\frac{\partial f\left(\hat{p}(y,t),\hat{u}(y,t)\right)}{\partial \hat{p}}dy}\tilde{f}(t),
\end{eqnarray}
where $\tilde{f}$ is defined in (\ref{fti}).
\end{lemma}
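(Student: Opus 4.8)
The plan is to differentiate the defining integral relation (\ref{defprnon}) with respect to $t$ and match the result against the known PDE (\ref{obs1non}) for $\hat{u}$, so that the residual collapses to the single boundary term $\tilde f(t)$. First I would write $\hat p(x,t)=X(t)+\hat D\int_0^x f(\hat p(y,t),\hat u(y,t))dy$ and differentiate in $t$, using $\dot X(t)=f(X(t),u(0,t))=f(\hat p(0,t),\hat u(0,t))+\tilde f(t)$ (recall $\hat p(0,t)=X(t)$, and $u(0,t)=\tilde u(0,t)+\hat u(0,t)$, so the difference $f(\hat p(0,t),\tilde u(0,t)+\hat u(0,t))-f(\hat p(0,t),\hat u(0,t))$ is exactly $\tilde f$ as defined in (\ref{fti})). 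This gives
\begin{eqnarray*}
\hat D\hat p_t(x,t)=\hat D\tilde f(t)+\hat D f(\hat p(0,t),\hat u(0,t))+\hat D^2\int_0^x\left(\frac{\partial f}{\partial \hat p}\hat p_t(y,t)+\frac{\partial f}{\partial \hat u}\hat u_t(y,t)\right)dy.
\end{eqnarray*}

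Next I would compute $\hat p_x(x,t)=\hat D f(\hat p(x,t),\hat u(x,t))$ directly from (\ref{defprnon}), and similarly differentiate in $x$ the integral identity to get an ODE in $x$ that $\hat p_x$ and $\hat p_t$ jointly satisfy. The key substitution is $\hat D\hat u_t=\hat u_x$ from (\ref{obs1non}); using this inside the integral and integrating by parts in $y$ converts the $\hat u_t$ term into boundary data, and comparing with the $x$-derivative relation for $\hat p$ shows that $z(x,t):=\hat D\hat p_t(x,t)-\hat p_x(x,t)$ satisfies a linear first-order ODE in $x$, namely $z_x(x,t)=\hat D\frac{\partial f(\hat p(x,t),\hat u(x,t))}{\partial \hat p}z(x,t)$, with initial value $z(0,t)=\hat D\tilde f(t)$ (since $\hat p_t(0,t)=\dot X(t)$ and $\hat p_x(0,t)=\hat D f(X(t),\hat u(0,t))$, whose difference is $\hat D\tilde f(t)$).

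Finally I would integrate this scalar-coefficient linear ODE in $x$ explicitly, obtaining
\begin{eqnarray*}
z(x,t)=e^{\hat D\int_0^x\frac{\partial f(\hat p(y,t),\hat u(y,t))}{\partial \hat p}dy}z(0,t)=\hat D\,e^{\hat D\int_0^x\frac{\partial f(\hat p(y,t),\hat u(y,t))}{\partial \hat p}dy}\tilde f(t),
\end{eqnarray*}
which is precisely the claimed identity. The main obstacle I anticipate is bookkeeping: getting the $\hat u_t$ term out of the double integral cleanly (via $\hat D\hat u_t=\hat u_x$ and integration by parts) and verifying that all the non-$\tilde f$ contributions telescope so that the forcing really is just the boundary mismatch at $x=0$; the derivation of the $x$-ODE for $z$ requires differentiating the $y$-integral carefully and using the Leibniz rule consistently. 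Once the ODE $z_x=\hat D\frac{\partial f}{\partial\hat p}z$ with $z(0,t)=\hat D\tilde f(t)$ is established, the conclusion is immediate.
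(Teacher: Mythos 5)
Your proposal is correct and follows essentially the paper's own route: the paper also forms the residual $\Psi(x,t)=\hat D\hat p_t(x,t)-\hat p_x(x,t)$, shows (using $\hat p(0,t)=X(t)$, $\hat D\hat u_t=\hat u_x$ and the fundamental theorem of calculus for $f(\hat p(x,t),\hat u(x,t))$) that it satisfies the Volterra equation $\Psi(x,t)=\hat D\int_0^x\frac{\partial f(\hat p(y,t),\hat u(y,t))}{\partial\hat p}\Psi(y,t)\,dy+\hat D\tilde f(t)$, whose differentiated form is exactly your ODE $z_x=\hat D\frac{\partial f}{\partial\hat p}z$, $z(0,t)=\hat D\tilde f(t)$, and then solves it to obtain the stated exponential factor. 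Two cosmetic remarks only: no integration by parts is actually needed (the substitution $\hat D\hat u_t=\hat u_y$ already cancels the $\partial f/\partial\hat u$ terms), and for $n>1$ the coefficient $\partial f/\partial\hat p$ is a Jacobian matrix, so the exponential is to be read, as in the paper's statement, as the corresponding transition (solution) operator.
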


\begin{proof}
Differentiating (\ref{defprnon}) with respect to $t$, $x$ and using (\ref{obs1non})--(\ref{obs2non}) with the fact that $p(0,t)=X(t)$ we get
\begin{eqnarray}
\Psi(x,t)&=&\hat{D}f(\hat{p}(0,t),\tilde{u}(0,t)+\hat{u}(0,t))\nonumber\\
&&+\hat{D}\int_0^x\frac{\partial f\left(\hat{p}(y,t),\hat{u}(y,t)\right)}{\partial \hat{p}}\hat{D}\hat{p}_t(y,t)dy\nonumber\\
&&+\hat{D}\int_0^x\frac{\partial f\left(\hat{p}(y,t),\hat{u}(y,t)\right)}{\partial \hat{u}}\hat{u}_y(y,t)dy\nonumber\\
&&-\hat{D}f(\hat{p}(x,t),\hat{u}(x,t))\\
\Psi(x,t)&=&\hat{D}\hat{p}_t(x,t)-\hat{p}_x(x,t).
\end{eqnarray}
Since 
\begin{eqnarray}
f(\hat{p}(x,t),\hat{u}(x,t))&=&\int_0^x\left( \frac{\partial f\left(\hat{p}(y,t),\hat{u}(y,t)\right)}{\partial \hat{p}}\hat{p}_y(y,t)\right.\nonumber\\
&&\left.+ \frac{\partial f\left(\hat{p}(y,t),\hat{u}(y,t)\right)}{\partial \hat{u}}\hat{u}_y(y,t)\right)dy\nonumber\\
&&+f(\hat{p}(0,t),\hat{u}(0,t)),
\end{eqnarray}
we get that
\begin{eqnarray}
\Psi(x,t)\!=\!\hat{D}\int_0^x \frac{\partial f\left(\hat{p}(y,t),\hat{u}(y,t)\right)}{\partial \hat{p}}\Psi(y,t)+\hat{D}\tilde{f}(t).\label{eqaman}
\end{eqnarray}
Solving (\ref{eqaman}) for $\Psi$ the lemma is proved.
\end{proof}

\begin{lemma}
\label{lemap1}
There exists a class $\mathcal{K}_{\infty}$ function $\alpha_6$ such that for all $x\in[0,1]$
\begin{eqnarray}
|\hat{p}(x,t)|\leq\alpha_6\left(|X(t)|+\int_0^1\alpha^*(|\hat{u}(x,t)|)dx\right).\label{bounprp}
\end{eqnarray}
\end{lemma}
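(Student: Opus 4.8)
The plan is to read the defining relation (\ref{defprnon}) as an ordinary differential equation in the \emph{spatial} variable: for each fixed $t$, $\hat{p}(\cdot,t)$ is the solution on $x\in[0,1]$ of $\frac{d}{dx}\hat{p}(x,t)=\hat{D}f\left(\hat{p}(x,t),\hat{u}(x,t)\right)$, $\hat{p}(0,t)=X(t)$, where $x$ plays the role of ``time'' and $\hat{u}(\cdot,t)$ that of the ``input'' (it is bounded on $[0,1]$, since $\hat{u}(x,t)=U(t+\hat{D}(x-1))$ by (\ref{est11})). Since $\dot{X}=f(X,\omega)$ is strongly forward complete, so is $\dot{X}=\hat{D}f(X,\omega)$ after rescaling, hence this $x$-solution exists on all of $[0,1]$; more to the point, Assumption \ref{ass1} supplies the smooth positive definite $R$ and the class $\mathcal{K}_{\infty}$ functions $\alpha_1,\alpha_2,\alpha_3$ of (\ref{forb1})--(\ref{forb2}), which I would use to derive an a priori bound uniform in $x\in[0,1]$.

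Concretely, I would put $r(x):=R\left(\hat{p}(x,t)\right)$ and differentiate along the predictor ODE, invoking (\ref{forb2}):
\begin{eqnarray}
r'(x)&=&\frac{\partial R\left(\hat{p}(x,t)\right)}{\partial X}\hat{D}f\left(\hat{p}(x,t),\hat{u}(x,t)\right)\nonumber\\
&\leq&\hat{D}\,r(x)+\hat{D}\,\alpha_3\left(\left|\hat{u}(x,t)\right|\right).\nonumber
\end{eqnarray}
Then $\left(e^{-\hat{D}x}r(x)\right)'\leq\hat{D}e^{-\hat{D}x}\alpha_3\left(|\hat{u}(x,t)|\right)$, and integrating from $0$ to $x\leq1$, together with $R(X(t))\leq\alpha_2(|X(t)|)$ from (\ref{forb1}), gives for all $x\in[0,1]$
\begin{eqnarray}
r(x)&\leq& e^{\hat{D}}\left(\alpha_2\left(\left|X(t)\right|\right)+\hat{D}\int_0^1\alpha_3\left(\left|\hat{u}(y,t)\right|\right)dy\right).\nonumber
\end{eqnarray}

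To close, from (\ref{forb1}) again $\alpha_1(|\hat{p}(x,t)|)\leq R(\hat{p}(x,t))=r(x)$, so $|\hat{p}(x,t)|\leq\alpha_1^{-1}(r(x))$; using the elementary bound $\alpha_1^{-1}(a+b)\leq\alpha_1^{-1}(2a)+\alpha_1^{-1}(2b)$ and the fact that $\alpha^*$ may be taken to dominate $\alpha_3$ (harmless, as $\alpha^*$ need only majorize finitely many class $\mathcal{K}_{\infty}$ functions), so that $\int_0^1\alpha_3(|\hat{u}(y,t)|)dy\leq\int_0^1\alpha^*(|\hat{u}(y,t)|)dy$, one arrives at (\ref{bounprp}) with
\begin{eqnarray}
\alpha_6(s)&:=&\alpha_1^{-1}\left(2e^{\hat{D}}\alpha_2(s)\right)+\alpha_1^{-1}\left(2e^{\hat{D}}\hat{D}\,s\right),\nonumber
\end{eqnarray}
a class $\mathcal{K}_{\infty}$ function as a sum of compositions of such functions. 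I do not anticipate a genuine obstacle: the argument is simply the standard forward-completeness a priori estimate, the only subtleties being that it is run in the spatial variable $x$ of the predictor ODE rather than in time, and that all the class-$\mathcal{K}_{\infty}$ bookkeeping must be funneled through the single $L_1$-type quantity $\int_0^1\alpha^*(|\hat{u}(x,t)|)dx$ that appears on the right of (\ref{bounprp}).
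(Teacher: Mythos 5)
Your argument is correct and is exactly the paper's route: the paper also differentiates (\ref{defprnon}) in $x$, compares the resulting spatial ODE with the forward-completeness estimate of Assumption \ref{ass1} via the comparison principle, and handles $\alpha_3$ by the majorization $e^{\hat{D}}\hat{D}\alpha_3<\alpha^*$ (citing Lemma 7 of \cite{krstic feed} for details), which matches your Gronwall computation with $r(x)=R(\hat{p}(x,t))$ and the absorption of constants into $\alpha_6$. No gaps.
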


\begin{proof}
Differentiating (\ref{defprnon}) with respect to $x$ and comparing the resulting ODE with the ODE in $t$ for $X$, the proof is complete with Assumption \ref{ass1} and the comparison principle after appropriately majorizing $e^{\hat{D}}\hat{D}\alpha_3<\alpha^*$. The detailed proof can be found in \cite{krstic feed} (Lemma 7).
\end{proof}

\begin{lemma}
\label{lemmauw}
There exists class $\mathcal{K}_{\infty}$ function $\alpha_{11}\ldots\alpha_{13}$ such that for all $x\in[0,1]$
\begin{eqnarray}
|\hat{w}(x,t)|&\leq&\alpha_{11}\left(\Omega(t)\right)\label{21}\\
|\hat{w}_x(x,t)|&\leq&|\hat{u}_x(x,t)|+\alpha_{12}\left(\Omega(t)\right)\label{22}\\
\int_0^1\hat{w}_{xx}(x,t)^2dx&\leq&6\int_0^1\hat{u}_{xx}(x,t)^2dx+\alpha_{13}\left(\Omega(t)\right),
\end{eqnarray}
where
\begin{eqnarray}
\Omega(t)\!=\!|X(t)|\!+\!\int_0^1\alpha^*(|\hat{u}(x,t)|)dx\!+\!\!\int_0^1\!\!\hat{u}_x(x,t)^2dx.
\end{eqnarray}
\end{lemma}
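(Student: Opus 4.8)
The plan is to obtain all three estimates directly from the backstepping transformation (\ref{back1non}), $\hat w(x,t)=\hat u(x,t)-\kappa(t+\hat Dx,\hat p(x,t))$, and its spatial derivatives, using: the derivative bounds (\ref{kap}) on $\kappa$ from Assumption \ref{ass2} (I write $\kappa_t,\kappa_{\hat p},\kappa_{tt},\kappa_{t\hat p},\kappa_{\hat p\hat p},\dots$ for the partial derivatives of $\kappa$); the bound of Lemma \ref{lemap1} on $\hat p$, which since $|X(t)|+\int_0^1\alpha^*(|\hat u(x,t)|)\,dx\le\Omega(t)$ gives $|\hat p(x,t)|\le\alpha_6(\Omega(t))$; and the identity $\hat p_x(x,t)=\hat Df(\hat p(x,t),\hat u(x,t))$ obtained by differentiating (\ref{defprnon}) in $x$. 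First I would record two preliminary facts. (i) Since $\hat w(1,t)=0$ by (\ref{tilw33}), $\hat u(1,t)=\kappa(t+\hat D,\hat p(1,t))$, so $|\hat u(1,t)|\le\hat\alpha(\alpha_6(\Omega(t)))$; writing $\hat u(x,t)=\hat u(1,t)-\int_x^1\hat u_y(y,t)\,dy$ and using the Cauchy--Schwarz inequality gives the pointwise bound $|\hat u(x,t)|\le\hat\alpha(\alpha_6(\Omega(t)))+\big(\int_0^1\hat u_x(y,t)^2\,dy\big)^{1/2}=:\alpha_{10}(\Omega(t))$, with $\alpha_{10}$ of class $\mathcal K_\infty$ (using that $r\mapsto\sqrt r$ is class $\mathcal K_\infty$ and $\int_0^1\hat u_x^2\le\Omega$). (ii) Since $f\in C^2$ with $f(0,0)=0$, Hadamard's lemma yields $|f(z,w)|\le\alpha_f(|z|+|w|)$ for some $\alpha_f$ of class $\mathcal K_\infty$, and $|f_{\hat p}(z,w)|$, $|f_{\hat u}(z,w)|$ are bounded by nondecreasing continuous functions of $|z|+|w|$.

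Given (i), the first estimate is immediate: $|\hat w(x,t)|\le|\hat u(x,t)|+\hat\alpha(|\hat p(x,t)|)\le\alpha_{10}(\Omega(t))+\hat\alpha(\alpha_6(\Omega(t)))=:\alpha_{11}(\Omega(t))$. For the second, differentiating (\ref{back1non}) in $x$ and substituting $\hat p_x=\hat Df(\hat p,\hat u)$ gives $\hat w_x=\hat u_x-\hat D\kappa_t(t+\hat Dx,\hat p)-\hat D\kappa_{\hat p}(t+\hat Dx,\hat p)f(\hat p,\hat u)$, whence $|\hat w_x(x,t)|\le|\hat u_x(x,t)|+\hat D\hat\alpha(|\hat p|)+\hat D(\mu+\hat\alpha(|\hat p|))\alpha_f(|\hat p|+|\hat u|)$; by $|\hat p|\le\alpha_6(\Omega)$ and $|\hat u|\le\alpha_{10}(\Omega)$ from (ii) and (i), the last two terms are dominated by a class $\mathcal K_\infty$ function $\alpha_{12}(\Omega(t))$.

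For the $\hat w_{xx}$ estimate I would differentiate once more, using $\hat p_x=\hat Df(\hat p,\hat u)$ and $\frac{d}{dx}f(\hat p,\hat u)=\hat Df_{\hat p}f(\hat p,\hat u)+f_{\hat u}\hat u_x$ throughout. After collecting terms this produces, schematically, $\hat w_{xx}=\hat u_{xx}-\hat D\kappa_{\hat p}f_{\hat u}\hat u_x+H$, where $H$ is the sum of the four smooth terms $-\hat D^2\kappa_{tt}$, $-2\hat D^2\kappa_{t\hat p}f$, $-\hat D^2f^T\kappa_{\hat p\hat p}f$ and $-\hat D^2\kappa_{\hat p}f_{\hat p}f$, each of which is bounded pointwise in $x$ by a class $\mathcal K_\infty$ function of $\Omega(t)$ thanks to (\ref{kap}), Lemma \ref{lemap1} and facts (i)--(ii). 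Treating $\hat w_{xx}$ as a sum of exactly six terms and applying $\big(\sum_{i=1}^6 a_i\big)^2\le 6\sum_{i=1}^6 a_i^2$, the $\hat u_{xx}$ term yields $6\int_0^1\hat u_{xx}^2\,dx$; the term $-\hat D\kappa_{\hat p}f_{\hat u}\hat u_x$ yields $6\hat D^2\int_0^1(\kappa_{\hat p}f_{\hat u})^2\hat u_x^2\,dx\le 6\hat D^2\big(\sup_{x\in[0,1]}(\kappa_{\hat p}f_{\hat u})^2\big)\int_0^1\hat u_x^2\,dx$, in which the supremum is bounded by a class $\mathcal K$ function of $\Omega(t)$ (via the sup bounds on $\hat p,\hat u$) while $\int_0^1\hat u_x^2\le\Omega(t)$, so the product is a class $\mathcal K_\infty$ function of $\Omega(t)$; and the four $H$-terms contribute a further class $\mathcal K_\infty$ function of $\Omega(t)$. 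Taking $\alpha_{13}$ to majorize the sum of all the non-$\hat u_{xx}$ contributions yields the claim; this is essentially the argument used for Lemma~7 in \cite{krstic feed}.

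The step I expect to be the main obstacle is the $\hat u_x$ term appearing \emph{linearly} in $\hat w_{xx}$: it cannot be absorbed pointwise into a class $\mathcal K_\infty$ function of $\Omega$, because $\Omega$ controls only the $L^2$ norm of $\hat u_x$ and not its supremum; the way around it is to square and integrate before bounding, after which the term becomes a sup-bounded coefficient times $\int_0^1\hat u_x^2\le\Omega$. Apart from that, the work is the lengthy but routine differentiation of the transformation, the one recurring point of care being to verify that every residual quantity — built from products, compositions and square roots of the class $\mathcal K$ functions $\hat\alpha$, $\alpha^*$, $\alpha_6$, $\alpha_f$ — can indeed be rewritten as a class $\mathcal K_\infty$ function of $\Omega$.
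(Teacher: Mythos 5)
Your proposal is correct and follows essentially the same route as the paper: bound $|\hat p|$ via Lemma \ref{lemap1}, bound $\sup_x|\hat u(x,t)|$ through the boundary value $\hat u(1,t)=\kappa(t+\hat D,\hat p(1,t))$ plus the integral of $\hat u_x$, and then differentiate the backstepping transformation in $x$ using $\hat p_x=\hat D f(\hat p,\hat u)$, (\ref{kap}) and (\ref{fd}) with routine class $\mathcal K$ majorizations. The paper only sketches (\ref{21})--(\ref{22}) and says the $\hat w_{xx}$ bound follows similarly; your six-term decomposition (which explains the factor $6$) and your handling of the linear $\hat u_x$ term by squaring and integrating before bounding are exactly the intended completion.
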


\begin{proof}
The proof of the lemma is based on algebraic manipulations and routine class $\mathcal{K}$ majorizations using the direct (\ref{back1non}) backstepping transformation together with relations (\ref{defprnon}) for the predictor state and Lemma \ref{lemap1}. For the reader's benefit we prove (\ref{21}) and (\ref{22}). The rest can be proved similarly. From (\ref{back1non}) and (\ref{kap}) we get that $|\hat{w}(x,t)|\leq|\hat{u}(x,t)| +\hat{\alpha}\left(|\hat{p}(x,t)|\right)$. Using the fact that 
\begin{eqnarray}
\sup_{x\in[0,1]}|\hat{u}(x,t)|\leq |\hat{u}(1,t)|+\int_0^1|\hat{u}_x(x,t)|dx,\label{fgg}
\end{eqnarray}
 with relation (\ref{connl}) and Lemma \ref{lemap1} we get (\ref{21}). For proving (\ref{22}) we proceed as follows. Differentiating (\ref{back1non}) with respect to $x$ we get that 
\begin{eqnarray}
\hat{w}_x(x,t)&\!=\!&\hat{u}_x(x,t)+\hat{D}\frac{\partial \kappa\left(t+\hat{D}x,\hat{p}(x,t)\right)}{\partial t}+\hat{D}\nonumber\\
&&\times\frac{\partial \kappa\left(t+\hat{D}x,\hat{p}(x,t)\right)}{\partial \hat{p}}\!f\left(\hat{p}(x,t),\hat{u}(x,t)\right)\!.
\end{eqnarray}
Combining (\ref{kap}), (\ref{fd}) with Lemma \ref{lemap1} and (\ref{fgg}) we arrive at (\ref{22}) with appropriate class $\mathcal{K}$ majorizations.
\end{proof}

\begin{lemma}
\label{lemz}
There exists positive constants $M^*$, $c^*$ such that for all solutions of the system satisfying (\ref{222}) the following holds for all $x\in[0,1]$
\begin{eqnarray}
|\hat{\rho}(x,t)|\leq M^*\left(|X(t)|+\int_0^1\alpha^*(|\hat{w}(x,t)|)dx\right).
\end{eqnarray}
\end{lemma}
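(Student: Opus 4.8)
The plan is to reduce Lemma~\ref{lemz} to Lemma~\ref{lemap1}. Recall from the footnote after (\ref{invdefprnon}) that $\hat\rho$ and $\hat p$ satisfy the \emph{same} spatial ODE with the same initial condition $X(t)$, so $\hat\rho(x,t)\equiv\hat p(x,t)$; hence Lemma~\ref{lemap1} already yields
\[
|\hat\rho(x,t)|\le\alpha_6\!\left(|X(t)|+\int_0^1\alpha^*(|\hat u(y,t)|)\,dy\right),\qquad x\in[0,1].
\]
It therefore suffices to dominate $\int_0^1\alpha^*(|\hat u(y,t)|)\,dy$ by a quantity of the form $\mathrm{const}\cdot\big(|X(t)|+\int_0^1\alpha^*(|\hat w(y,t)|)\,dy\big)$, after which the claim follows by a routine class-$\mathcal K_\infty$ majorization of $\alpha_6$ and a redefinition of $M^*$.

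To get that domination I would use the inverse transformation (\ref{invback1non}) together with the growth bound (\ref{kap}) (taken with $i=j=0$, which gives $|\kappa(t,\xi)|\le\hat\alpha(|\xi|)$, and in particular $\kappa(t,0)=0$), obtaining the pointwise estimate $|\hat u(y,t)|\le|\hat w(y,t)|+\hat\alpha(|\hat\rho(y,t)|)$. Applying $\alpha^*$ and splitting with $\alpha^*(a+b)\le\alpha^*(2a)+\alpha^*(2b)$ gives, writing $N(t):=\sup_{y\in[0,1]}|\hat\rho(y,t)|$,
\[
\int_0^1\alpha^*(|\hat u(y,t)|)\,dy\le\int_0^1\alpha^*(2|\hat w(y,t)|)\,dy+\alpha^*\!\big(2\hat\alpha(N(t))\big),
\]
and hence, combining with the displayed bound above,
\[
N(t)\le\alpha_6\!\left(|X(t)|+\int_0^1\alpha^*(2|\hat w(y,t)|)\,dy+\alpha^*\!\big(2\hat\alpha(N(t))\big)\right).
\]

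This last inequality is implicit in $N(t)$, and closing it is where the restriction (\ref{222}) enters and is the main obstacle. Under $\Pi(t)<c^*$, Lemmas~\ref{lemag} and \ref{lemga} (the equivalences $\Pi\sim\Gamma\sim V$) force $|X(t)|$, $\int_0^1\alpha^*(|\hat w(y,t)|)\,dy$ and $N(t)$ to all be a priori small; choosing $c^*$ small enough, a small-gain / implicit-function argument near the origin — using monotonicity and continuity of the class-$\mathcal K_\infty$ composition $s\mapsto\alpha_6(\,\cdot\,+\alpha^*(2\hat\alpha(s)))$ — absorbs the feedback term $\alpha^*(2\hat\alpha(N(t)))$ into the left-hand side, leaving $N(t)\le M^*\big(|X(t)|+\int_0^1\alpha^*(|\hat w(y,t)|)\,dy\big)$. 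An alternative route that avoids the implicit inequality is to integrate the spatial ODE $\hat\rho_x=\hat D f(\hat\rho,\hat w+\kappa(t+\hat Dx,\hat\rho))$ directly: decompose $f(\hat\rho,\hat w+\kappa)=f(\hat\rho,\kappa(t+\hat Dx,\hat\rho))+\big(f(\hat\rho,\hat w+\kappa)-f(\hat\rho,\kappa)\big)$, estimate the first (closed-loop) term with a converse Lyapunov function for the exponentially stable plant $\dot X=f(X,\kappa(t,X))$ provided by Assumption~\ref{ass2} (for instance $S^*$ of Lemma~\ref{lyapl}), bound the second by a local Lipschitz constant of $f$ times $|\hat w|$, and apply the comparison principle on $x\in[0,1]$ — existence of $\hat\rho$ on all of $[0,1]$ being automatic since $\hat\rho=\hat p$ is well defined by construction. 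Either way, the one genuinely nontrivial point is controlling the $\hat\alpha(|\hat\rho|)$ feedback generated by expressing $\hat u$ through $\hat w$ and the nonlinear control law $\kappa$ in the inverse transformation, and this is exactly what makes the smallness hypothesis (\ref{222}) necessary.
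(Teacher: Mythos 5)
Your ``alternative route'' is, in essence, the paper's own proof: the paper differentiates (\ref{invdefprnon}) in $x$, views the resulting spatial ODE $\hat\rho_x=\hat D f\big(\hat\rho,\kappa(t+\hat Dx,\hat\rho)+\hat w\big)$ as a perturbation of the exponentially stable closed loop of Assumption \ref{ass2}, invokes a converse Lyapunov function $S$ (Theorem 4.14 in \cite{khalil}), passes to $S^*=\sqrt S$ to get $\dot S^*\le -\frac{M_3}{2\sqrt{M_1}}|X|+\frac{M_4L(R,M)}{2\sqrt{M_1}}|\omega|$, bounds $|\hat w(x,t)|\le M$ uniformly using $|\hat w(x,t)|\le\int_0^1|\hat w_x(y,t)|dy$, relation (\ref{22}), (\ref{222}) and Lemma \ref{lemga}, and then applies the comparison principle over $x\in[0,1]$ (after majorizing $s<\alpha^*(s)$) to obtain the stated bound with an explicit $M^*$. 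One point your sketch glosses over: ``existence of $\hat\rho$ on $[0,1]$'' is not enough; you must also keep $\hat\rho$ inside the region $D_R$ on which the converse Lyapunov estimates and the local Lipschitz constant $L(R,M)$ are valid, and this is precisely why the paper imposes the additional restriction (\ref{cshit}) in the definition of $c^*$ (i.e., $c^*=\min\{R,c_1^*\}$).

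Your primary route, however, has a genuine gap at exactly the step you call the main obstacle. From $N(t)\le\alpha_6\big(|X(t)|+\int_0^1\alpha^*(2|\hat w(y,t)|)dy+\alpha^*(2\hat\alpha(N(t)))\big)$ you cannot in general ``absorb'' the feedback term: that requires a small-gain condition, namely that the loop map $s\mapsto\alpha_6\big(\,\cdot+\alpha^*(2\hat\alpha(s))\big)$ be strictly below the identity near the origin. Nothing guarantees this: $\alpha_6$, $\alpha^*$ and $\hat\alpha$ are fixed by the plant, by $\kappa$ and by Assumption \ref{ass1} (indeed $\alpha^*$ is chosen so that $\alpha^*(s)>s$), so near zero the composed gain can well exceed one. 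Smallness of the signals under (\ref{222}) does not repair this: if, say, all the gains were linear with slopes at least one, the implicit inequality would reduce to something like $N\le C(A+N)$ with $C\ge 1$, which is vacuous---it is consistent with $N$ arbitrarily large relative to $A=|X|+\int_0^1\alpha^*(2|\hat w|)dy$ no matter how small both quantities are, so no bound of the form $N\le M^*A$ follows. Shrinking $c^*$ shrinks the admissible signals but not the loop gain, so the announced ``small-gain / implicit-function argument'' is not available without an extra hypothesis the lemma does not have. This is exactly why the paper avoids the fixed point in $N$ altogether and instead exploits the stabilizing structure of $\kappa$ through the converse Lyapunov function, which produces a genuinely contractive estimate along $x$.
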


\begin{proof}
Under Assumtpion \ref{ass2} and choosing $c^*<R$, from Theorem 4.14 from \cite{khalil} there exist a continuously differentiable function $S:[t_0,\infty)\times D_R\to\mathbb{R}^n$, where $D_R=\left\{X\in\mathbb{R}^n||X|<R\right\}$, and positive constants $M_1$, $M_2$, $M_3$ and $M_4$ such that for all $X\in D_R$
\begin{eqnarray}
M_1|X|^2&\leq& S(t,X)\leq M_2|X|^2\label{issn1}\\
\frac{\partial S\left(t,X(t)\right)}{\partial t}&+&\frac{\partial S\left(t,X(t)\right)}{\partial X}f\left(X(t),\kappa\left(t,X(t)\right)\right)\nonumber\\
&\leq& -M_3|X(t)|^2\label{iss1}\\
\left|\frac{\partial S\left(t,X\right)}{\partial X}\right|&\leq& M_4|X|.\label{issnew}
\end{eqnarray}
Since $f\in C^2\left(\mathbb{R}^n\times\mathbb{R};\mathbb{R}\right)$, for all $X\in D_R$ and every $\omega\in\mathbb{R}$ such that $|\omega|\leq M$ for some positive constant $M$, there exists an increasing function in both arguments $L\in C\left(\mathbb{R}_+^2;\mathbb{R}_+\right)$ such that along the solutions of $\dot{X}(t)=f\left(X(t),\kappa\left(t,X(t)+\omega(t)\right)\right)$ it holds that
\begin{eqnarray}
\dot{S}\!&\leq\!& -M_3|X(t)|^2\!+\! \frac{\partial S\left(t,X\right)}{\partial X}\left(f\left(X(t),\kappa\left(t,X(t)\right)\!+\!\omega(t)\right)\right.\nonumber\\
&&\left.-f\left(X(t),\kappa\left(t,X(t)\right)\right)\right)\nonumber\\
&\leq&-M_3|X(t)|^2+ M_4|X(t)|L(R,M)|\omega(t)|,
\end{eqnarray}
where we used Lemma 3.1 in \cite{khalil}. With $S^*=\sqrt{S}$ we get
\begin{eqnarray}
\dot{S^*}(t,X(t))\!\leq\! -\frac{M_3}{2\sqrt{M_1}}|X(t)|\! +\! \frac{M_4L(R,M)}{2\sqrt{M_1}}|\omega(t)|.\label{iss1new}
\end{eqnarray}
Differentiating (\ref{invdefprnon}) with respect to $x$ we get that
\begin{eqnarray}
\hat{\rho}_x(x,t)&=&\hat{D}f\left(\hat{\rho}(x,t),\kappa\left(t+\hat{D}x,\hat{\rho}(x,t)\right)\right.\nonumber\\
&&+\left.\hat{w}(x,t)\right).\label{fht}
\end{eqnarray}
Using the fact that for all $x\in[0,1]$, $|\hat{w}(x,t)|\leq \int_0^1|\hat{w}_x(x,t)|dx$ (which follows from (\ref{tilw33})), relation (\ref{22}) together with (\ref{222}) and Lemma \ref{lemga} give that for all $x\in[0,1]$, $|\hat{w}(x,t)|\leq M$, with $M=\zeta_2(c^*)+\alpha_{12}\left(\zeta_2(c^*)\right)$. Using a change of variables in (\ref{fht}) as $x'={t+\hat{D}x}$ and comparing the resulting ODE in $x'$ for $\hat{\rho}$ with the ODE in $t$ for $\dot{X}(t)=f\left(X(t),\kappa(t,X(t)+\omega(t)\right)$, with (\ref{issn1}) and after appropriately majorizing $s<\alpha^*(s)$, the proof is complete with $M^*(R,M)=\frac{\sqrt{M_2}}{\sqrt{M_1}}+\hat{D} \frac{M_4L(R,M)}{2{M_1}}$, and hence, with $c^*=\min\left\{R,c_1^*\right\}$, where $c_1^*$ satisfies 
\begin{eqnarray}
M^*(R,M(c_1^*))\left(c_1^*+\alpha_{11}\left(\zeta_2(c_1^*)\right)\right)<R.\label{cshit}
\end{eqnarray}
\end{proof}

\begin{lemma}
\label{lemmauwnew}
There exists class $\mathcal{KC}_{\infty}$ functions $\alpha_{14}\ldots\alpha_{16}$ and a positive constant $c^*$ such that for all solutions of the systems satisfying (\ref{222}), the following holds
\begin{eqnarray}
|\hat{u}(x,t)|&\leq&\alpha_{14}\left(Y(t),R\right)\\
|\hat{u}_x(x,t)|&\leq&|\hat{w}_x(x,t)|+\alpha_{15}\left(Y(t),R\right)\\
\int_0^1\hat{u}_{xx}(x,t)^2&\leq&6\int_0^1\hat{w}_{xx}(x,t)^2dx\!+\!\alpha_{16}\left(Y(t),R\right)\!,
\end{eqnarray}
 for all $x\in[0,1]$, where
\begin{eqnarray}
Y(t)\!\!=\!\!|X(t)|\!+\!\int_0^1\alpha^*(|\hat{w}(x,t)|)dx\!+\!\!\int_0^1\!\!\hat{w}_x(x,t)^2dx.\label{defY}
\end{eqnarray}
\end{lemma}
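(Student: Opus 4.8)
The plan is to prove this as the ``inverse'' analogue of Lemma~\ref{lemmauw}: there, $\hat{w}$-quantities were majorized by $\hat{u}$-quantities using the direct transformation (\ref{back1non}) and the globally valid predictor bound of Lemma~\ref{lemap1}; here the roles are exchanged, and one works instead with the inverse transformation (\ref{invback1non}) and with the bound of Lemma~\ref{lemz} on $\hat{\rho}$, which holds only under (\ref{222}). Accordingly I would first fix $c^*$ no larger than the constant supplied by Lemma~\ref{lemz}, so that along every solution obeying (\ref{222}) one has $|\hat{\rho}(x,t)|\le M^*(R,M)\bigl(|X(t)|+\int_0^1\alpha^*(|\hat{w}(x,t)|)dx\bigr)\le M^*(R,M)\,Y(t)$ for all $x\in[0,1]$, with $M$ a constant depending only on $c^*$. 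For the first inequality, evaluate (\ref{invback1non}) and use the zeroth-order bound in (\ref{kap}) to get $|\hat{u}(x,t)|\le|\hat{w}(x,t)|+\hat{\alpha}(|\hat{\rho}(x,t)|)$; since $\hat{w}(1,t)=0$ by (\ref{tilw33}), $\sup_{x\in[0,1]}|\hat{w}(x,t)|\le\int_0^1|\hat{w}_x(x,t)|dx\le\bigl(\int_0^1\hat{w}_x(x,t)^2dx\bigr)^{1/2}$, and combining this with the above $\hat{\rho}$-bound and a routine class $\mathcal{K}$ majorization yields a class $\mathcal{KC}_{\infty}$ function $\alpha_{14}(Y(t),R)$, the argument $R$ entering only through $M^*(R,M)$.

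For the derivative estimates, differentiate (\ref{invback1non}) in $x$ and substitute $\hat{\rho}_x(x,t)=\hat{D}f\bigl(\hat{\rho}(x,t),\hat{w}(x,t)+\kappa(t+\hat{D}x,\hat{\rho}(x,t))\bigr)$ from (\ref{fht}) to get
\[
\hat{u}_x(x,t)=\hat{w}_x(x,t)+\hat{D}\,\kappa_t+\hat{D}\,\kappa_{\hat{\rho}}\,f\bigl(\hat{\rho}(x,t),\hat{w}(x,t)+\kappa\bigr),
\]
where the arguments of $\kappa$ and its partials are $(t+\hat{D}x,\hat{\rho}(x,t))$. The partials $\kappa_t$, $\kappa_{\hat{\rho}}$ are bounded by $\hat{\alpha}(|\hat{\rho}|)$ and $\mu+\hat{\alpha}(|\hat{\rho}|)$ via (\ref{kap}), and because (\ref{222}) together with Lemma~\ref{lemz} confines $\hat{\rho}$, $\hat{w}$ and hence $\hat{w}+\kappa$ to a ball whose radius is a function of $R$, the $C^2$-map $f$ and its first partials are uniformly bounded there; each extra term is therefore dominated by a class $\mathcal{K}$ function of $Y(t)$ with $R$ as parameter, giving $\alpha_{15}(Y(t),R)$. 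Differentiating once more in $x$ gives $\hat{u}_{xx}=\hat{w}_{xx}+(\text{a fixed number of remainder terms built from }\kappa_{tt},\kappa_{t\hat{\rho}},\kappa_{\hat{\rho}\hat{\rho}},\hat{\rho}_x,\hat{\rho}_{xx},f,\partial f/\partial\hat{\rho},\partial f/\partial\hat{u})$, with $\hat{\rho}_{xx}$ obtained by differentiating (\ref{fht}); squaring, applying $(a_0+\dots+a_k)^2\le(k+1)\sum_i a_i^2$, integrating over $x\in[0,1]$, and bounding every remainder as above — the term linear in $\hat{w}_x$ contributing, after squaring and integrating, only $\int_0^1\hat{w}_x(x,t)^2dx\le Y(t)$ — yields $\int_0^1\hat{u}_{xx}(x,t)^2dx\le 6\int_0^1\hat{w}_{xx}(x,t)^2dx+\alpha_{16}(Y(t),R)$.

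The analysis itself is elementary; the one delicate point is the bookkeeping required to verify that each of the finitely many remainder terms is genuinely majorized by a class $\mathcal{K}_{\infty}$ function of $Y(t)$ and that its dependence on $R$ is only continuous. This holds because the $R$-dependence is inherited solely from $M^*(R,M)$ and from the local Lipschitz and second-derivative bounds of $f$ on a ball whose radius depends continuously on $R$ through Lemma~\ref{lemz}, and it is precisely what forces $\alpha_{14},\dots,\alpha_{16}$ to be class $\mathcal{KC}_{\infty}$ rather than class $\mathcal{K}_{\infty}$ — in contrast to Lemma~\ref{lemmauw}, whose direct transformation rests on the globally defined $\hat{p}$ and the $R$-free bound of Lemma~\ref{lemap1}.
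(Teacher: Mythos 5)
Your proposal takes exactly the paper's route: choose $c^*$ as in Lemma~\ref{lemz}, then apply the inverse transformation (\ref{invback1non}) together with (\ref{invdefprnon}) (equivalently (\ref{fht})), the bounds (\ref{kap}) and the $\hat{\rho}$-estimate of Lemma~\ref{lemz}, and finish with routine class $\mathcal{K}$ majorizations mirroring the proof of Lemma~\ref{lemmauw}. The paper states this only as a one-line sketch, and your detailed bookkeeping (including the $R$-dependence entering through $M^*$ and the local bounds on $f$, and the vanishing at the origin coming from $f(0,0)=0$ and (\ref{kap})) correctly fills in the same argument.
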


\begin{proof}
Choose $c^*$ as in Lemma \ref{lemz}. Then, the proof of the lemma is based on algebraic manipulations and routine class $\mathcal{K}$ majorizations using the inverse transformation (\ref{invback1non}), relation (\ref{invdefprnon}) for the predictor state and Lemma \ref{lemz}.
\end{proof}

\begin{lemma}
\label{brfo}
There exist class $\mathcal{KC}_{\infty}$ functions $\alpha_{17}\ldots\alpha_{23}$ and positive constants $c^*, \mu_1, \mu_2, \mu_3, \mu_4, \mu_5$ such that for all solutions of the system satisfying (\ref{222}) the following holds for all $x\in[0,1]$
\begin{eqnarray}
|{r}(x,t)|&\leq&\frac{1}{\hat{D}}|\hat{w}_x(x,t)|+\alpha_{17}\left(Y(t),R\right)\label{rr00}\\
|r_1(x,t)|&\leq&\mu_1+\alpha_{18}\left(Y(t),R\right)\\
|r_2(x,t)|&\leq&\mu_2+\alpha_{19}\left(Y(t),R\right)\\
\int_0^1{r}_3(x,t)^2dx&\leq&\mu_3+\alpha_{20}\left(Y(t),R\right)\\
|{r}_4(t)|&\leq&\mu_4+\alpha_{21}\left(Y(t),R\right)\\
|r_5(t)|&\leq&\mu_5+\alpha_{22}\left(Y(t),R\right)\\
\int_0^1{r}_x(x,t)^2dx&\leq&\alpha_{23}\left(Y(t),R\right)\nonumber\\
&&+\frac{6}{\hat{D}^2}\int_0^1\hat{w}_{xx}(x,t)^2dx,\label{rrnn}
\end{eqnarray}
where $Y(t)$ is defined in (\ref{defY}).
\end{lemma}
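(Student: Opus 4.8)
The plan is to substitute into each of $r,r_1,\dots,r_5$ its explicit expression from Appendix~\ref{secap1}, equations (\ref{brr1})--(\ref{brrn}), and into $r_x$ the expression obtained by differentiating (\ref{roonon}) in $x$, and then to estimate term by term using three tools: the bounds (\ref{kap}) on $\kappa$ and its partial derivatives from Assumption~\ref{ass2}; the local boundedness of $f$ and of its first and second partial derivatives (from $f\in C^2$, once the arguments are known to lie in a compact set); and Lemmas~\ref{lemz} and~\ref{lemmauwnew}, which under the restriction (\ref{222}) give $|\hat{\rho}(x,t)|\leq M^*Y(t)$ and bound $|\hat{u}(x,t)|$, $|\hat{u}_x(x,t)|$ and $\int_0^1\hat{u}_{xx}(x,t)^2dx$ by class $\mathcal{KC}_\infty$ functions of $Y(t)$, the last two modulo the explicit terms $|\hat{w}_x(x,t)|$ and $\int_0^1\hat{w}_{xx}(x,t)^2dx$. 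Fixing $c^*$ as in Lemma~\ref{lemz} keeps $\hat{\rho}(x,t)$ and $\hat{u}(x,t)$ in a compact set determined by $R$ and $Y(t)$, so every $f$- and $\kappa$-derivative factor, and in particular the exponentials $\exp\left(\hat{D}\int_0^x\frac{\partial f(\hat{\rho}(y,t),\hat{u}(y,t))}{\partial\hat{\rho}}\,dy\right)$, is dominated by a continuous nondecreasing function of $Y(t)$ (and of $R$).

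Carrying this out: in $r$ the term $\frac{1}{\hat{D}}\hat{w}_x(x,t)$ is retained explicitly, and the remaining two summands, being products of a $\kappa$-derivative bounded by $\hat{\alpha}(|\hat{\rho}|)$ or $\mu+\hat{\alpha}(|\hat{\rho}|)$ with either $f$ or a constant, are dominated by a class $\mathcal{KC}_\infty$ function of $Y(t)$, which yields (\ref{rr00}). For $r_1,r_2,r_4,r_5$ the $\kappa$-derivatives that occur carry the additive constant $\mu$ in (\ref{kap}); multiplying by the bounded exponential and $f$-derivative factors and then isolating the value of the resulting bound at $Y(t)=0$ produces the constants $\mu_1,\dots,\mu_5$, while the continuous, nondecreasing remainder---which vanishes at $Y(t)=0$ because $\hat{\rho}(x,t)\to0$ there by Lemma~\ref{lemz}---is majorized by a class $\mathcal{KC}_\infty$ function. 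For $r_3=r_{2_x}$, differentiating $r_2$ in $x$ introduces at most first spatial derivatives of $\hat{\rho}$ (equal to $\hat{D}f$, hence bounded) and of $\hat{u}$; using $\int_0^1\hat{u}_x(x,t)^2dx\leq 2\int_0^1\hat{w}_x(x,t)^2dx+2\alpha_{15}(Y(t),R)^2$ and absorbing the $\hat{w}_x$-integral into $Y(t)$ gives $\int_0^1 r_3(x,t)^2dx\leq\mu_3+\alpha_{20}(Y(t),R)$. Finally, only the $\frac{1}{\hat{D}}\hat{w}_x$ term contributes a second spatial derivative to $r_x$; writing $r_x=\frac{1}{\hat{D}}\hat{w}_{xx}+g$ with $g$ involving at most first derivatives, using $(a+b)^2\leq 6a^2+\frac{6}{5}b^2$, and treating $\int_0^1\hat{u}_x^2dx$ as above, gives (\ref{rrnn}).

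The argument is otherwise routine; the one point requiring care is the bookkeeping that isolates a genuine constant $\mu_i$ from the origin-vanishing part in the estimates for $r_1,\dots,r_5$ and $r_3$. Because the constant $\mu$ in (\ref{kap}) always appears multiplied by factors---exponentials of integrals of $\partial f/\partial\hat{\rho}$, and sometimes $|f|$ itself---that depend on $Y(t)$, one must evaluate those factors at $Y(t)=0$ to extract each $\mu_i$ and then check that what is left is monotone and continuous in $Y(t)$, hence admits a class $\mathcal{KC}_\infty$ majorant; this again uses Lemma~\ref{lemz} to ensure $\hat{\rho}(x,t)\to0$, so that $\hat{\alpha}(|\hat{\rho}(x,t)|)\to0$, as $Y(t)\to0$. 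Collecting the per-term estimates and relabeling the majorants as $\alpha_{17},\dots,\alpha_{23}$ completes the proof.
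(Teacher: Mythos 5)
Your proposal is correct and follows essentially the same route as the paper's proof: substitute the explicit expressions (\ref{brr1})--(\ref{brrn}) and the $x$-derivative of (\ref{roonon}), invoke (\ref{kap}), the $C^2$ regularity of $f$ with $f(0,0)=0$, Lemma \ref{lemz} (with the inverse-transformation bounds of Lemma \ref{lemmauwnew}), keep the $\frac{1}{\hat{D}}\hat{w}_x$ and $\frac{1}{\hat{D}}\hat{w}_{xx}$ terms explicit, and absorb the rest into class $\mathcal{KC}_{\infty}$ majorants plus constants coming from the additive $\mu$ in (\ref{kap}). The only cosmetic differences are that the paper routes its (detailed) estimates for (\ref{rr00}) and (\ref{rrnn}) through the functional $\Lambda(t)$ (with $\int_0^1|\hat{w}_x|dx$, converted to $Y(t)$ by Cauchy--Schwarz) and uses the identity $\hat{D}r=\hat{u}_x$ to bound $r_x$ recursively via $|r|$, whereas you bound $\hat{u}_x$ directly from Lemma \ref{lemmauwnew}; these are equivalent.
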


\begin{proof}
Let $c^*$ be as in Lemma \ref{lemz}. The proof is based on (\ref{brr1})--(\ref{brrn}) combined with (\ref{kap}), the fact that $f$ is twice differentiable and with similar calculations as in the proof of Lemma \ref{lemmauw}. Yet, we provide the proofs of (\ref{rr00}), (\ref{rrnn}) as some of the steps are useful later on. Under Assumption \ref{ass2} (which allows us to choose $\hat{\alpha}$ continuously differentiable without loss of generality), Lemma \ref{lemz}, and the facts that, $|\hat{w}(x,t)|\leq\int_0^1|\hat{w}_x(x,t)|dx$, for all $x\in[0,1]$ (which follows from (\ref{tilw33})), and that $f:C^2\left(\mathbb{R}^n\times\mathbb{R};\mathbb{R}^n\right)$, $f(0,0)\!=\!0$, which allows us to conclude
\begin{eqnarray}
\left|f\left(X,\omega\right)\right|\leq\alpha_5\left(|X|+|\omega|\right),\label{fd}
\end{eqnarray}
for some function $\alpha_5\in\mathcal{K}_{\infty} \cap C^1$, we get from (\ref{roonon}) that
\begin{eqnarray}
|{r}(x,t)|&\leq&\frac{1}{\hat{D}}|\hat{w}_x(x,t)|+\alpha_{r}\left(\Lambda(t),R\right)\label{rr00new}
\end{eqnarray}
\begin{eqnarray}
\Lambda(t)&=&|X(t)|+\int_0^1\alpha^*(|\hat{w}(x,t)|) dx\nonumber\\
&&+\int_0^1|\hat{w}_x(x,t)|dx,\label{deflambda}
\end{eqnarray}
for some class $\mathcal{KC}_{\infty}$ function $\alpha_r$, continuously differentiable in its first argument. Analogously, differentiating (\ref{roonon}) with respect to $x$ and using  (\ref{invdefprnon}) together with the fact that $\hat{D}r(x,t)=\hat{u}_x(x,t)$ it is shown that
\begin{eqnarray}
|{r}_x(x,t)|&\leq&\frac{1}{\hat{D}}|\hat{w}_{xx}(x,t)|+\alpha_{1,r_x}\left(\Lambda(t),R\right)\nonumber\\
&&+\left(\mu^*+\alpha_{2,r_x}\left(\Lambda(t),R\right)\right)|r(x,t)|,\label{rrnnnew}
\end{eqnarray}
for some positive constant $\mu^*$ and some functions $\alpha_{1,r_x}$, $\alpha_{2,r_x}$ $\in\mathcal{KC}_{\infty}$ which are continuously differentiable with respect to their first argument. With the Cauchy-Schwarz inequality we get (\ref{rr00}), (\ref{rrnn}).
\end{proof}

\section{Proof of Lemma \ref{lemmanm}}
\label{secprb}
The proof of this lemma is based on the following fact.

\begin{fact}
\label{lemforp}
There exists a class $\mathcal{KC}_{\infty}$ function $\hat{\zeta}_1$ such that if the perturbation $\delta$ and the solutions of the system satisfying (\ref{prhx}) for $0<c<1$ and (\ref{prhxnew}), the following holds for all $\phi(t)\leq\theta\leq t$
\begin{eqnarray}
\left|P^*(\theta)\right|&\leq&\hat{\zeta}_1\left(\left|X(t)\right|+ \sup_{\phi(t)\leq s\leq t}\left|U(s)\right|,R\right) \label{resr2gl}.
\end{eqnarray}
\end{fact}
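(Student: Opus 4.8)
The plan is to view $P^*$, for $\theta\in[\phi(t),t]$, as the state of the ordinary differential equation
\begin{eqnarray}
\frac{dP^*(\theta)}{d\theta}&=&\frac{f\left(P^*(\theta),U(\theta)\right)}{1-G(\theta)},\quad P^*(\phi(t))=X(t),\nonumber
\end{eqnarray}
driven by the ``input'' $U(\cdot)$ --- this is exactly (\ref{sysstrnon})--(\ref{sysstrexnon}) together with $P^*(\phi(t))=X(t)$ --- and to run on it a forward-completeness estimate based on Assumption \ref{ass1}, in the same spirit as the proof of Lemma \ref{lemap1}. The only new ingredient compared with Lemma \ref{lemap1} is the time-varying, state-dependent denominator $1-G(\theta)$, which will be controlled using (\ref{prhx}) and (\ref{prhxnew}).

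First I would let $\mathcal{R}$ (the positive definite function in Assumption \ref{ass1}, renamed to avoid a clash with $R=\min\{r^*,c,\hat{D}\}$) and $\alpha_1,\alpha_2,\alpha_3$ be as in Assumption \ref{ass1}, and differentiate $\mathcal{R}(P^*(\theta))$ along the ODE above:
\begin{eqnarray}
\frac{d}{d\theta}\mathcal{R}(P^*(\theta))&=&\frac{1}{1-G(\theta)}\frac{\partial \mathcal{R}(P^*(\theta))}{\partial X}f\left(P^*(\theta),U(\theta)\right).\nonumber
\end{eqnarray}
By (\ref{prhx}), $1-G(\theta)>1-c>0$ for all $\phi(t)\le\theta\le t$; hence dividing (\ref{forb2}) (with $X=P^*(\theta)$, $\omega=U(\theta)$) by $1-G(\theta)$ and using $\mathcal{R}(P^*(\theta))+\alpha_3(|U(\theta)|)\ge0$ yields
\begin{eqnarray}
\frac{d}{d\theta}\mathcal{R}(P^*(\theta))&\le&\frac{1}{1-c}\left(\mathcal{R}(P^*(\theta))+\alpha_3\left(|U(\theta)|\right)\right).\nonumber
\end{eqnarray}

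Next I would apply the comparison principle (\cite{khalil}) from $\mathcal{R}(P^*(\phi(t)))=\mathcal{R}(X(t))$, majorize $\int_{\phi(t)}^{t}\alpha_3(|U(s)|)\,ds\le(t-\phi(t))\,\alpha_3\!\left(\sup_{\phi(t)\le s\le t}|U(s)|\right)$, and use that the integration window has length $t-\phi(t)=\hat{D}+\delta(t,X(t))$ --- which is strictly positive by (\ref{prhxnew}) (taken at the $\theta$ for which $\sigma(\theta)=t$, where $P^*(\theta)=X(t)$) and, by the standing bound (\ref{89}), bounded above by $\hat{D}+c_1+\hat{\mu}(|X(t)|)$. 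Sandwiching $\mathcal{R}$ between $\alpha_1$ and $\alpha_2$ through (\ref{forb1}) then gives an estimate of the form $|P^*(\theta)|\le\alpha_1^{-1}\!\left(e^{a}\left(\alpha_2(|X(t)|)+a\,\alpha_3\!\left(\sup_{\phi(t)\le s\le t}|U(s)|\right)\right)\right)$ with $a=(\hat{D}+c_1+\hat{\mu}(|X(t)|))/(1-c)$. Finally I would replace $|X(t)|$ and $\sup_{\phi(t)\le s\le t}|U(s)|$ by their sum (every function occurring is nondecreasing) and absorb the remaining dependence on $c$, $\hat{D}$, $c_1$ into the second argument $R=\min\{r^*,c,\hat{D}\}$, obtaining the class $\mathcal{KC}_{\infty}$ function $\hat{\zeta}_1$ and the bound (\ref{resr2gl}).

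The step I expect to be the main obstacle is the self-referential denominator: $G(\theta)$ depends on $P^*(\theta)$ and on $f(P^*(\theta),U(\theta))$, so one cannot bound $1/(1-G)$ beforehand and then run Gr\"onwall. The way around it is that, for an \emph{upper} bound on $|P^*|$, only the one-sided estimate $1-G(\theta)>1-c$ from (\ref{prhx}) is needed --- the division step above never uses a lower bound on $1/(1-G)$ --- so the argument is not circular. A secondary point that must be checked is that the window $[\phi(t),t]$ has length $\hat{D}+\delta(t,X(t))$, which has to be shown positive (precisely (\ref{prhxnew})) and finite (immediate from (\ref{89})). Note that, unlike in Lemma \ref{lemmanm}, the present estimate uses only (\ref{prhx}), (\ref{prhxnew}) and the a priori bound (\ref{89}), and not the sharper condition (\ref{136}).
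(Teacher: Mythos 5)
Your argument is sound, but note that the paper does not prove this fact at all: its ``proof'' is a one-line citation to Lemma 4 of \cite{bek}, where the analogous bound is established for the state-dependent-delay predictor. Your self-contained derivation---treating $P^*$ on $[\phi(t),t]$ as the solution of the ODE (\ref{sysstrnon}) with initial condition $P^*(\phi(t))=X(t)$, differentiating the strong-forward-completeness function $R$ of Assumption \ref{ass1}, using only the one-sided bound $1-G(\theta)>1-c>0$ from (\ref{prhx}) together with $R+\alpha_3\geq0$, and then applying Gr\"onwall over a window of length $\hat{D}+\delta(t,X(t))$---is exactly the forward-completeness estimate that the cited lemma rests on, and your observation that no lower bound on $1/(1-G)$ is needed correctly defuses the apparent circularity. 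Two bookkeeping points are worth making explicit. First, you invoke (\ref{89}) to bound the window length; this is not listed among the hypotheses of the Fact as stated, but it (or some magnitude bound on $\delta$) is genuinely indispensable---conditions (\ref{prhx}), (\ref{prhxnew}) alone allow, e.g., a huge constant $\delta$, for which no bound of the form (\ref{resr2gl}) can hold---and (\ref{89}) is a standing assumption wherever the Fact is used (Lemma \ref{lemmanm}), so your usage is appropriate. Second, ``absorbing the dependence on $c$ into $R$'' is slightly loose, since $R=\min\{r^*,c,\hat{D}\}\leq c$ does not upper-bound $1/(1-c)$ by a function of $R$; in the paper's actual application, however, (\ref{136}) gives $G<R$, so one may run your estimate with $1/(1-R)$ in place of $1/(1-c)$, which makes the second argument of $\hat{\zeta}_1$ legitimate. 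Neither point affects the structure of your proof.
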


\begin{proof}
The proof can be found in \cite{bek} (Lemma 4).
\end{proof}

It holds $U(\theta)=U(t)-\int_{\theta}^t\dot{U}(s)ds$, for all $\phi(t)\leq\theta\leq t$, and hence, using (\ref{kap}), (\ref{connl}) and the fact that $\int_{\theta}^t\dot{U}(s)ds=\int_{\frac{\sigma(\theta)-t}{\sigma(t)-t}}^1u_x(x,t)dx$ we get $\sup_{\phi(t)\leq \theta\leq t}\left|U(\theta)\right|\!\leq\! \hat{\alpha}\left(|\hat{p}(1,t)|\right)$ $+\int_0^1\left(|\hat{u}_x(x,t)|+|\tilde{u}_x(x,t)|\right)dx$. Lemma \ref{lemap1} (App. B) and the Cauchy-Schwarz give
\begin{eqnarray}
\sup_{\phi(t)\leq \theta\leq t}\!\left|U(\theta)\right|&\!\leq\!&\alpha_{4}\left(|X(t)|\!+\!\int_0^1\alpha^*(|\hat{u}(x,t)|)dx\right.\nonumber\\
&&\!\left.\!+\!\int_0^1\hat{u}_x(x,t)^2dx\!+\!\!\int_0^1\!\tilde{u}_x(x,t)^2dx\right)\!,\label{rebn}
\end{eqnarray}
 for some class $\mathcal{K}_{\infty}$ function $\alpha_4$. Using (\ref{89}), (\ref{fd}), conditions (\ref{prhx}) for $0<c<1$ and (\ref{prhxnew}) are satisfied, if the following relation is satisfied for all $\phi(t)\leq\theta\leq t$
\begin{eqnarray}
R_1&>&c_1+\hat{\mu}(|P^*(\theta)|)+\left(c_1+\hat{\mu}(|P^*(\theta)|)\right)\nonumber\\
&&\times\alpha_5\left(|X(t)|+\sup_{\phi(t)\leq\theta\leq t}|U(\theta)|\right),
\end{eqnarray}
where $R_1=\min\left\{c,\hat{D}\right\}$. With Fact \ref{lemforp}, Lemma \ref{lemga} and (\ref{rebn}) the lemma is proved with $c^*=c_2^*$ and $c_1$ satisfying
\begin{eqnarray}
R&>&\left(c_1+\hat{\mu}\left(\hat{\zeta}_1\left(\alpha_{4}\left(3{\zeta}_{2}\left({c}_2^*,R\right)\right),R\right)\right)\right)\nonumber\\
&&\times \left(1+\alpha_5\left(\alpha_{4}\left(3{\zeta}_{2}\left({c}_2^*,R\right)\right)\right)\right).\label{ccc}
\end{eqnarray}

\section{Proof of Lemma \ref{lyapl}}
\label{apply}
Let $c^*$ be the minimum of $c_1^*$ and $c_2^*$ defined in (\ref{cshit}) and (\ref{ccc}) respectively. Taking the derivative of $V$ with $S^*=\sqrt{S}$ and using integration by parts together with (\ref{tilw1})--(\ref{tilw2}), (\ref{obs3non1})--(\ref{obs3non}), (\ref{tilu1})--(\ref{tilu2}), (\ref{trr1non})--(\ref{sys2ntr11nonxx}), (\ref{iss1new}) we get
\setlength{\arraycolsep}{0pt}\begin{eqnarray}
\dot{V}(t)&\leq&-\frac{M_3}{2\sqrt{M_1}}|X(t)|+\frac{M_4L^*(R)}{2\sqrt{M_1}}|\hat{w}(0,t)|\nonumber\\
&&+\frac{M_4L^*(R)}{2\sqrt{M_1}}|\tilde{u}(0,t)|-g_{11}\pi(0,t)|\tilde{u}(0,t)|\nonumber\\
&&-g_1g_{11}  \int_0^1e^{g_1x}\pi(x,t)|\tilde{u}(x,t)|dx-g_{11}\pi_x(x,t)\nonumber\\
&&\times \int_0^1e^{g_1x}|\tilde{u}(x,t)|dx+g_{11}\sup_{x\in[0,1]}\left|1-\hat{D}\pi(x,t)\right|\nonumber\\
&&\times\int_0^1e^{g_1x}|r(x,t)|dx+g_6e^{g_2}\left|\frac{1}{\pi(1,t)}-\hat{D}\right|^2\nonumber\\
&&\times r(1,t)^2  -g_6\pi(0,t)\tilde{u}_x(0,t)^2-g_2g_6\nonumber\\
&&\times\int_0^1e^{g_2x}\pi(x,t)\tilde{u}_x(x,t)^2dx+ g_6\pi_x(x,t)\nonumber\\
&&\times\int_0^1e^{g_2x}\tilde{u}_x(x,t)^2dx +2g_6\sup_{x\in[0,1]}\left|1-\hat{D}\pi(x,t)\right|\nonumber\\
&&\times\int_0^1e^{g_2x}|r_x(x,t)||\tilde{u}_x(x,t)|dx-g_7\hat{w}_x(0,t)^2\nonumber\\
&&-g_4g_7\int_0^1e^{g_4x}\hat{w}_x(x,t)^2dx-g_9g_{10}\nonumber\\
&&\times\int_0^1e^{g_{10}x}|\hat{w}_x(x,t)|dx-g_9|w_x(0,t)|+2g_7\nonumber\\
&&\times\int_0^1e^{g_4x}|\hat{w}_x(x,t)| |r_2(x,t)|dx\left|\tilde{f}(t)\right|\nonumber\\
&&+g_9\int_0^1e^{g_{10}x}|r_2(x,t)|dx|\tilde{f}(t)|+g_7e^{g_4}r_1(1,t)^2\nonumber\\
&&\times\left|\tilde{f}(t)\right|^2+g_9e^{g_{10}}|r_1(1,t)||\tilde{f}(t)|+g_8e^{g_5}\nonumber\\
&&\times\hat{w}_{xx}(1,t)^2-g_8\hat{w}_{xx}(0,t)^2-g_8g_5\int_0^1e^{g_5x}\nonumber\\
&&\times\hat{w}_{xx}(x,t)^2dx+2g_8\int_0^1e^{g_5x} |r_3(x,t)|\nonumber\\
&&\times|\hat{w}_{xx}(x,t)|dx\left|\tilde{f}(t)\right|-g_{12}\alpha^*\left(|\hat{w}(0,t)|\right)\nonumber\\
&&-g_{12}g_3\int_0^1\alpha^*\left(|\hat{w}(x,t)|\right)dx+g_{12}\nonumber\\
&&\times\int_0^1e^{g_3x}|\alpha{^*}'(|\hat{w}(x,t)|)||r_1(x,t)|dx\left|\tilde{f}(t)\right|,\label{rel1}
\end{eqnarray}\setlength{\arraycolsep}{5pt}for an increasing function $L^*\in C\left(\mathbb{R}_+;\mathbb{R}_+\right)$. Using (\ref{defpinon}) and Lemma \ref{lemmanm} we get for all $x\in[0,1]$, $\frac{1}{\left(1+R\right)\left(\hat{D}+R\right)}\leq\pi(x,t)\leq\frac{1}{\left(1-R\right)\left(\hat{D}-R\right)}$, $|\pi_x(x,t)|\leq\frac{\hat{c}}{\left(1-R\right)\left(\hat{D}-R\right)}$, where 
\begin{eqnarray}
\hat{c}&=&\left(c_1+\hat{\mu}\left(\hat{\zeta}_1\left(\alpha_{4}\left(3{\zeta}_{2}\left(\min\left\{c_1^*,{c}_2^*\right\},R\right)\right),R\right)\right)\right)\nonumber\\
&&\times\left(1+\alpha_5\left(\alpha_{4}\left(3{\zeta}_{2}\left(\min\left\{c_1^*,{c}_2^*\right\},R\right)\right)\right)\right).\label{tosp}
\end{eqnarray}
 Moreover, since $\pi(x,t)$ is linear in $x$, it takes its maximum value either at $x=0$ or at $x=1$, and hence, 
\begin{eqnarray}
\left|{1}-\hat{D}\pi(x,t)\right|&\leq&\max\left\{\left|{1}-\hat{D}\pi(0,t)\right|,\left|{1}-\hat{D}\pi(1,t)\right|\right\}\nonumber\\
&\leq&M_2Z(t),
\end{eqnarray}
where $M_2=\pi(0,t)+\pi(1,t)\leq\frac{2}{\left(1-R\right)\left(\hat{D}-R\right)}$, and $Z(t)=\max\left\{\left|\delta\left(\sigma(t),P^*(t)\right)\right|,\left|\delta\left(\sigma(t),P^*(t)\right)+\hat{D}\right|\right.$ $\left.\times\!\left( \left| \delta_t\left(\sigma(t),P^*(t)\right)\right|\!+\!\left|\nabla\delta\left(\sigma(t),P^*(t)\right)\!f\left(P^*(t),U(t)\right)\right|\right)\right\}$. Therefore, using (\ref{ccc}) we have that
\begin{eqnarray}
\sup_{x\in[0,1]}\left|{1}-\hat{D}\pi(x,t)\right|&\leq&2\hat{c}B^*(R)\\
B^*(R)&=&\frac{1}{\hat{D}-R}\times\frac{\hat{D}+2}{1-R}.
\end{eqnarray}
Since $f:C^2\left(\mathbb{R}^n\times\mathbb{R};\mathbb{R}^n\right)$, using relations (\ref{fti}), (\ref{fzz}) and (\ref{fuu}), with Lemma 3.1 from \cite{khalil} and (\ref{222}) we have that
\begin{eqnarray}
\left|\tilde{f}(t)\right|+\left|\tilde{f}_{\hat{\rho}}(t)\right|+\left|\tilde{f}_{\hat{u}}(t)\right|&\leq& \kappa_1(R)\left|\tilde{u}(0,t)\right|,\label{brr}
\end{eqnarray} 
for an increasing function $\kappa_1\in C\left(\mathbb{R}_+;\mathbb{R}_+\right)$. Hence, from (\ref{sys2ntr11non}), (\ref{rr00new}) we conclude after using (\ref{brr}), (\ref{deflambda}) that
\begin{eqnarray}
r(1,t)^2&\leq& \kappa_{2}(R)|\tilde{u}(0,t)| +2\alpha_{r}^2\left(\Lambda(t),R\right),\label{kl1}
\end{eqnarray}
for an increasing function $\kappa_2\in C\left(\mathbb{R}_+;\mathbb{R}_+\right)$ (where we also used the fact that $\left|\tilde{f}\right|^2\leq c(R) \left|\tilde{f}\right|$ which follows form (\ref{fti}) and (\ref{222})). We are concerned next with $\hat{w}_{xx}(1,t)^2$. With Young's inequality and (\ref{rr00}), from (\ref{sys2ntr11nonxx}) we get that there exists an increasing function $\kappa_3\in C\left(\mathbb{R}_+;\mathbb{R}_+\right)$ such that (where we absorb the powers of $\left|\tilde{f}\right|$, $\left|\tilde{f}_{\hat{\rho}}\right|$ and $\left|\tilde{f}_{\hat{u}}\right|$ higher than one in $\kappa_3$ based on (\ref{fti}), (\ref{fzz}), (\ref{fuu}), (\ref{222}))
	\begin{eqnarray}
	\hat{w}_{xx}(1,t)^2&\leq& \kappa_3(R)\left(\left|\tilde{f}(t)\right|+\left|\tilde{f}_{\hat{\rho}}(t)\right|+\left|\tilde{f}_{\hat{u}}(t)\right|\right)\nonumber\\
	&&+\kappa_3(R)\tilde{u}_x(0,t)^2+\kappa_3(R)\hat{w}_x(0,t)\nonumber\\
	&&+\left(\sup_{x\in[0,1]}\left|1-\hat{D}\pi(x,t)\right|\right)^2\kappa_3(R) \nonumber\\
	&&\times\alpha_{r}^2\left(\Lambda(t),R\right),\label{kl3}
	\end{eqnarray}
	where we used (\ref{deflambda}) and the fact that $r(0,t)^2\!\leq\!\frac{2}{\hat{D}^2}\hat{w}_x(0,t)^2+2\alpha_{r}^2\left(\Lambda(t),R\right)$ which follows from (\ref{rr00}). 
From  (\ref{deflambda}), (\ref{rrnnnew}) and (\ref{222}) we get 
\setlength{\arraycolsep}{0pt}\begin{eqnarray}
\int_0^1{r}_x(x,t)^2dx&\leq&\frac{3}{\hat{D}^2}\int_0^1\hat{w}_{xx}(x,t)^2dx+\kappa_4(R)\nonumber\\
&&\times \left(\int_0^1\hat{w}_x(x,t)^2dx+\alpha_{r}^2\left(\Lambda(t),R\right)\right)\nonumber\\
&&+3\alpha_{1,r_x}^2\left(\Lambda(t),R\right).\label{rrnnnew1}
\end{eqnarray}\setlength{\arraycolsep}{5pt}With relation (\ref{222}) and Lemmas \ref{lemag}, \ref{lemga}, \ref{brfo} (App. B), from (\ref{rel1}) one can conclude that the terms that multiply $|\tilde{f}|$ are bounded by $\left(g_7e^{g_4}+g_9e^{g_{10}}+g_8e^{g_5}+g_{12}e^{g_3}\right)\kappa_5(R)$. Choosing  $g_1\!=\!g_2\!=\!\left(1+R\right)\left(\hat{D}+R\right)\left(1+\frac{R}{\left(1-R\right)\left(\hat{D}-R\right)}\right)$ and combining (\ref{kl1}), (\ref{kl3}), (\ref{rrnnnew1}), (\ref{brr}) with Young's inequality, we get from (\ref{rel1}), (\ref{rr00new}), (\ref{deflambda}) and (\ref{rrnnnew})
\setlength{\arraycolsep}{0pt}\begin{eqnarray}
\dot{V}(t)&\leq&-\frac{M_3}{2\sqrt{M_1}}|X(t)|+\frac{M_4L^*(R)}{2\sqrt{M_1}}\left(|\hat{w}(0,t)|+|\tilde{u}(0,t)|\right)\nonumber\\
&&-g_{11}  \int_0^1|\tilde{u}(x,t)|dx-g_{12}\alpha^*\left(|\hat{w}(0,t)|\right) \nonumber\\
&&-{g_{11}}{\left(1+R\right)^{-1}\left(\hat{D}+R\right)}^{-1}|\tilde{u}(0,t)|-\tilde{u}_x(0,t)^2\nonumber\\
&& \times\left({g_6}{\left(1+R\right)^{-1}\left(\hat{D}+R\right)^{-1}}-g_8e^{g_5}\kappa_3(R)\right)\nonumber\\
&&-\left(g_7-g_8e^{g_5}\kappa_3(R)\right)\hat{w}_x(0,t)^2-g_6\nonumber\\
&&\times\left(1-2\hat{c}B^*(R)\right)\int_0^1e^{g_2x}\tilde{u}_x(x,t)^2dx \nonumber\\
&&-g_{12}g_3\int_0^1\alpha^*\left(|\hat{w}(x,t)|\right)dx-g_8\hat{w}_{xx}(0,t)^2\nonumber\\
&&-\left(g_7g_4-{2\hat{c}e^{g_2}{g_6}\left(\hat{D}\kappa_4(R)+1\right)}{\hat{D}}^{-1} B^*(R)\right)\nonumber\\
&&\times\int_0^1e^{g_4x}\hat{w}_x(x,t)^2dx-\left(g_8g_5-{g_6e^{g_2}}{\hat{D}}^{-2}\right.\nonumber\\
&&\left.\times 6\hat{c}B^*(R)\right)\int_0^1\hat{w}_{xx}(x,t)^2dx\nonumber\\
&&+\left(4g_6e^{g_2}\left(\hat{D}+1\right)^2\left|1-\hat{D}\pi(1,t)\right|^2\kappa_2(R)\right.\nonumber\\
&&\left.+\left(g_7e^{g_4}+g_9e^{g_{10}}+g_8e^{g_5}+g_{12}e^{g_3}\right)\kappa_5(R)\right)\nonumber\\
&&\times|\tilde{u}(0,t)|-\left(g_9g_{10}-{g_{11}e^{g_1}2\hat{c}B^*(R)}{\hat{D}}^{-1}\right)\nonumber\\
&&\times\int_0^1e^{g_{10}x}|\hat{w}_x(x,t)|dx+2g_6e^{g_2}\hat{c}B^*(R)\nonumber\\
&&\times\left(3\alpha_{1,r_x}^2\left(\Lambda(t),R\right)+\kappa_4(R) \alpha_{r}^2\left(\Lambda(t),R\right)\right)\nonumber\\
&&-g_9|w_x(0,t)|+g_{11}\sup_{x\in[0,1]}\left|1-\hat{D}\pi(x,t)\right| e^{g_1}\nonumber\\
&&\times\alpha_{r}\left(\Lambda(t),R\right)+\left(\sup_{x\in[0,1]}\left|1-\hat{D}\pi(x,t)\right|\right)^2\left(8g_6\right.\nonumber\\
&&\times\!e^{g_2}(\hat{D}\!+\!1)^2\!\!+\!g_8e^{g_5}\kappa_3(R))\alpha_{r}^2\left(\Lambda(t),R\right).\label{wxwx}
\end{eqnarray}\setlength{\arraycolsep}{5pt}From the proof of Lemma \ref{brfo} (App. B) we have that $\alpha_r$ and $\alpha_{1,r_x}$ are continuously differentiable and hence locally Lipschitz with respect to their first argument. Using (\ref{222}) we can write $\alpha_r^2(s,R)\leq\alpha_{26}(R)\alpha_r(s,R)\leq  L_1(R)\alpha_{26}(R) s$ and $\alpha_{1,r_x}^2(s,R)\leq \alpha_{26}(R)\alpha_r(s,R)\leq L_2(R)\alpha_{26}(R)s$ for every bounded $s$, some increasing functions $L_i\in C\left(\mathbb{R}_+;\mathbb{R}_+\right)$, $i=1,2$, and some class $\mathcal{K}_{\infty}$ function $\alpha_{26}$. Choosing $g_6>\left(1+R\right)\left(\hat{D}+R\right)g_8e^{g_5}\kappa_3(R)$, $g_7>g_8e^{g_5}\kappa_3(R)$, $g_{11}>\left(1+R\right)\left(\hat{D}+R\right)\left(4g_6e^{g_2}\left(\hat{D}+1\right)^2\right.$ $4{B^*}^2(R)\kappa_2(R)$ $+\left(g_7e^{g_4}+g_9e^{g_{10}}+g_8e^{g_5}+g_{12}e^{g_3}\right)$ $\left.\kappa_5(R)+\frac{M_4L^*(R)}{2\sqrt{M_1}}\right)$, $g_4>\frac{1}{g_7}\left(e^{g_2}{g_6}\left(\kappa_4(R)+\frac{1}{\hat{D}}\right)\right.$ $\left.2B^*(R)+1\right)$, $g_3=g_5=g_8=g_9=g_{10}=g\!=\!1$ and since from the proof of Lemma \ref{lemz} $\alpha^*(s)>s$ choosing $g_{12}>\frac{M_4L^*(R)}{2\hat{D}\sqrt{M_1}}$ we get 
\setlength{\arraycolsep}{0pt}\begin{eqnarray}
\dot{V}(t)&\leq&-\left(\frac{M_3}{2\sqrt{M_1}}-\hat{c}B\right)|X(t)|-g_{11}  \int_0^1|\tilde{u}(x,t)|\nonumber\\
&&\times dx-g_6\left(1-\hat{c}B_2\right)\int_0^1e^{g_2x}\tilde{u}_x(x,t)^2dx\nonumber\\
&&-\int_0^1\hat{w}_x(x,t)^2dx-\left(1-\hat{c}\left(B_3+B\right)\right)\nonumber\\
&&\times\int_0^1|\hat{w}_x(x,t)|dx-\left(g_{12}-\hat{c}B\right)\nonumber\\
&&\times\int_0^1\alpha^*\left(|\hat{w}(x,t)|\right)dx-\left(1-\hat{c}B_1\right)\nonumber\\
&&\times\int_0^1\hat{w}_{xx}(x,t)^2dx  ,\label{wxwx123}
\end{eqnarray}\setlength{\arraycolsep}{5pt}
where we used (\ref{deflambda}) and
\begin{eqnarray}
B\left(R\right)&=&2g_6e^{g_2}B^*(R)\alpha_{26}(R)\left(3L_2(R)+\kappa_4(R)L_1(R)\right)\nonumber\\
&&+32g_6e^{g_2}\left(\hat{D}+1\right)^2L_1(R)\alpha_{26}(R) 4{B^*}^2(R)\nonumber\\
&&2L_1(R)B^*(R) \left(g_{11}e^{g_1}\!+\!2\alpha_{26}(R)e{B^*}(R)\right)\\
B_1(R)&=&6{g_6e^{g_2}}{\hat{D}^{-2}}B^*(R)\\
B_2(R)&=&2B^*(R)\\
B_3(R)&=&2eg_{11}e^{g_1}{\hat{D}}^{-1}B^*(R).
\end{eqnarray}
Restricting $c^*=\min\left\{c_1^*,{c}_2^*\right\}$ and $c_1$ such that $\hat{c}$ in (\ref{tosp})
 satisfies $\hat{c}\!<\!\min\left\{R,\hat{c}_1\right\}$, with $\hat{c}_1\!\max\left\{B_2,B_1,B_3+B\right\}\!\leq\!\frac{1}{2}\!\min\!\left\{\frac{M_3}{2\sqrt{M_1}},g_{12},1\right\}$, we arrive at $\dot{V}(t)\leq -\lambda V(t)$, with $\lambda=\frac{1}{2}\min\left\{\frac{M_3}{2\sqrt{M_1}},\right.$ $\left.2g_{11},g_6,1,g_{12}\right\}$. With the comparison principle (Lemma 3.4 in \cite{khalil}) we get (\ref{Afth}).

\section{Proof of Lemma \ref{lemag}}
\label{appp}

Using (\ref{issn1}), Lemma \ref{lemmauw} (App. B), the fact that for all $x\in[0,1]$, $\left|\tilde{u}(x,t)\right|\leq\int_0^1\left|\tilde{u}_x(x,t)\right|dx$ (which follows from (\ref{obs3non})), the Cauchy-Schwarz inequality and some routine class $\mathcal{K}$ calculations the proof is immediate.

\section{Proof of Lemma \ref{lem3}}
\label{lin1ap}
Taking the derivative of $V_{\rm L}$ along (\ref{sys2tr})--(\ref{sys2ntr}), (\ref{til1})--(\ref{til2}) and (\ref{trr1})--(\ref{sys2ntr11}) and using integration by parts, we get that
\begin{eqnarray}
\dot{V}_{\rm L}(t)&\leq&-\left|X(t)\right|^2\lambda_{\min}(Q)+2X(t)^TPB\hat{w}(0,t)\nonumber\\
&&+2X(t)^T PB\tilde{u}(0,t)-b_1b\int_0^1e^{bx}\pi(x,t)\tilde{u}(x,t)^2\nonumber\\
&&\times dx-b_1\pi(0,t)\tilde{u}(0,t)^2-b_1\int_0^1e^{bx}\pi_x(x,t)\nonumber\\
&&\times\tilde{u}(x,t)^2dx -2b_1\int_0^1e^{bx}\tilde{u}(x,t)\left({1}-\hat{D}\pi(x,t)\right)\nonumber\\
&&\times  r(x,t)dx+b_2\left(-\hat{w}(0,t)^2-\int_0^1\hat{w}(x,t)^2dx\right.\nonumber\\
&&\left.+2\hat{D}^2|K|^2e^{2|A|\hat{D}}\times |B|^2\tilde{u}(0,t)^2-\hat{w}_x(0,t)^2\right)\nonumber\\
&&-b_2\int_0^1\hat{w}_x(x,t)^2dx-2b_2\hat{D}\int_0^1(1+x)\nonumber\\
&&\times\hat{w}(x,t)Ke^{A\hat{D}x}B\tilde{u}(0,t)dx-2b_2\nonumber\\
&&\times\int_0^1\!(1+x)\hat{w}_x(x,t)\hat{D}^2\!Ke^{A\hat{D}x}\!AB\tilde{u}(0,t)dx.\label{kia}
 \end{eqnarray}
With similar calculations as in \cite{krstic varying} and with Lemma \ref{lemena} by choosing $b>(1-\pi_1^{**})\max\left\{1,\frac{1}{\pi_1^{**}}\right\}$ we get that 
\begin{eqnarray}
b\pi(x,t)+\pi_x(x,t)\geq\pi_0^{**}\beta^*,\label{axam}
\end{eqnarray}
 where $\beta^*=\min\left\{b-1+\pi_1^{**},(b+1)\pi_1^{**}-1\right\}>0$. 
 Since $\pi(x,t)$ is linear in $x$, it takes its maximum value either at $x=0$ or at $x=1$, and hence, 
\begin{eqnarray}
\left({1}-\hat{D}\pi(x,t)\right)&\leq&\max\left\{\left|{1}-\hat{D}\pi(0,t)\right|,\left|{1}-\hat{D}\pi(1,t)\right|\right\}\nonumber\\
&\leq&M_{2,\rm L}\gamma(t),\label{ggg1}
\end{eqnarray}
where 
\begin{eqnarray}
M_{2,\rm L}=\pi(0,t)+\pi(1,t)\leq \frac{1+\sup_{t\geq t_0}{\dot{\sigma}(t)}}{\inf_{t\geq t_0}{(\sigma(t)-t)}},\label{defm2}
\end{eqnarray}
 and $\gamma$ is defined in (\ref{starn}). We derive next a bound for $r(x,t)$ in terms of $X,\hat{w}$ and $\hat{w}_x$. Using (\ref{roo}) together with Young's and Cauchy-Schwarz's inequalities we get that
\begin{eqnarray}
\|r(t)\|^2&\leq& M_{1,\rm L} \left(|X(t)|^2+\|\hat{w}(t)\|^2+\|\hat{w}_x(t)\|^2\right)\label{boundr}\\
\|r(t)\|^2&=&\int_0^1r(x,t)^2dx,\label{newlo}\\
M_{1,\rm L}&=&{4}{\hat{D}^{-1}}+4\hat{D}\left|Ke^{(A+BK)\hat{D}}(A+BK)\right|^2+4\hat{D}\nonumber\\
&&\!\!\times |KB|^2\!+\!4\hat{D}\left|Ke^{(A+BK)\hat{D}}(A\!+\!\!BK)\hat{D}B\right|^2\!\!\!.\label{defm1}
\end{eqnarray}
Using (\ref{axam}), (\ref{ggg1}), (\ref{boundr}) and Young's inequality we get
\begin{eqnarray}
\dot{V}_{\rm L}(t)&\leq&-\frac{\lambda_{\min}(Q)}{4}\left|X(t)\right|^2 -b_1\pi_0^{**}\beta^*\int_0^1e^{bx}\tilde{u}(x,t)^2dx\nonumber\\
&&-\frac{b_2}{2}\int_0^1\hat{w}(x,t)^2dx-\frac{b_2}{2}\int_0^1\hat{w}_x(x,t)^2dx\nonumber\\
&&+\left(\frac{8|PB|}{\lambda_{\min}(Q)}-b_2\right)\hat{w}(0,t)^2+\left(\frac{8|PB|}{\lambda_{\min}(Q)}\right.\nonumber\\
&&\left.+8b_2\hat{D}^2|K|e^{|A|\hat{D}}|B|^2\left(2+|A|^2\right)-b_1\pi_0^{**}\right)\nonumber\\
&&\times\tilde{u}(0,t)^2+b_1M_{2,\rm L}(1+M_{1,\rm L})\gamma(t)\Xi(t),
 \end{eqnarray}
where
\begin{eqnarray}
\Xi(t)&=&|X(t)|^2+\int_0^1e^{bx}\tilde{u}(x,t)^2dx\nonumber\\
&&+\int_0^1\hat{w}(x,t)^2dx+\int_0^1\hat{w}_x(x,t)^2dx.
\end{eqnarray}
Choosing $b_1=\frac{8|PB|}{\lambda_{\min}(Q)}+8b_2\hat{D}^2|K|e^{|A|\hat{D}}|B|^2\left(2+|A|^2\right)$, $b_2=\frac{8|PB|}{\lambda_{\min}(Q)}$, and using the fact that
\begin{eqnarray}
M_{4,\rm L}\Xi(t)\leq V_{\rm L}(t)\leq M_{3,\rm L}\Xi(t)\label{bvv},
\end{eqnarray}
where
\begin{eqnarray}
M_{3,\rm L}&=&\lambda_{\max}(P)+b_1+2\hat{D}b_2\\
M_{4\rm, L}&=&\min\left\{\lambda_{\min}(P),b_1,\hat{D}b_2\right\},\label{defm4}
\end{eqnarray}
we get relation (\ref{vdottt}) with
\begin{eqnarray}
r_1&=&\frac{\min\left\{\lambda_{\min}(Q),4b_1\pi_0^{**}\beta^*,2{b_2}\right\}}{4M_{3,\rm L}}\label{defr1}\\
r_2&=&\frac{b_1M_{2,\rm L}(1+M_{1,\rm L})}{M_{4,\rm L}}.\label{defr2}
\end{eqnarray}

\section{Proof of Lemma \ref{lemfour}}
\label{fac1}

Consider first that $\gamma(t)\!=\!\left|\frac{1}{\pi(0,t)}-\hat{D}\right|$. Using (\ref{invp}), we get $\pi(x,t)=\frac{1+x\frac{\delta'\left(\sigma(t)\right)}{1-\delta'\left(\sigma(t)\right)}}{\hat{D}+\delta\left(\sigma(t)\right)}$, and hence, $\gamma(t)=\left|\delta\left(\sigma(t)\right)\right|$. Therefore, if $\delta$ satisfies (\ref{case1}) with $\delta_1r_2<{r_1}$, the lemma is proved. Assume next $\gamma(t)=\left|\frac{1}{\pi(1,t)}-\hat{D}\right|$. Thus, $\gamma(t)=\left|\delta(\sigma(t))\!\left(1\!-\!\delta'(\sigma(t))\right)\!-\!\hat{D}\delta'(\sigma(t))\!\right|$. With (\ref{ass1b}), (\ref{case1}) we get $\gamma(t)<\delta_1\left(1+\delta_1+\hat{D}\right)$. Hence,  the lemma is proved if $\delta_1r_2<\frac{r_1}{d}$, where $d={1}+\delta_1+\hat{D}$. Note that since one can choose $\pi_0^{**}\beta^*$ sufficiently large (by choosing a large $b$) such that $r_1$ in (\ref{defr1}) is independent of $\delta_1$ (or very large), and since from (\ref{defr2}) together with (\ref{defm1}), (\ref{defm2}), (\ref{defm4}) $r_2$ is bounded, one can always find a sufficiently small $\delta_1$ such that relation $\delta_1r_2<\frac{r_1}{d}$ is satisfied.

\section{Proof of Lemma \ref{lem4}}
\label{fac2}
Using relations (\ref{bck1}), (\ref{bckin}) together with Young's and Cauchy-Schwarz's inequalities we get that
\begin{eqnarray}
\|\hat{u}(t)\|^2&\leq&N_1\left(|X(t)|^2+\|\hat{w}(t)\|^2\right)\label{bbb1}\\
\|\hat{u}_x(t)\|^2&\leq&N_2\left(|X(t)|^2+\|\hat{w}(t)\|^2+\|\hat{w}_x(t)\|^2\right)\\
\|\hat{w}(t)\|^2&\leq&N_3\left(|X(t)|^2+\|\hat{u}(t)\|^2\right)\\
\|\hat{w}_x(t)\|^2&\leq&N_4\left(|X(t)|^2+\|\hat{u}(t)\|^2+\|\hat{u}_x(t)\|^2\right),\label{bbnn}
\end{eqnarray}
where $\|\cdot\|$ is defined in (\ref{newlo}) and
\begin{eqnarray}
N_1&=&3+3\left|Ke^{(A+BK)\hat{D}}\right|^2+3\hat{D}^2\left|Ke^{(A+BK)\hat{D}}B\right|^2\\
N_2&=&4+4\left|Ke^{(A+BK)\hat{D}}(A+BK)\hat{D}\right|^2+4\hat{D}^2|KB|^2\nonumber\\
&&+4\hat{D}^2\left|Ke^{(A+BK)\hat{D}}\right|^2|B|^2\left(1+\hat{D}|A+BK|\right)\\
N_3&=&3+3\left|Ke^{A\hat{D}}\right|^2+3\hat{D}^2\left|Ke^{A\hat{D}}B\right|^2\\
N_4&=&4+4\left|Ke^{A\hat{D}}A\hat{D}\right|^2+4\hat{D}^2|KB|^2\nonumber\\
&&+4\hat{D}^2\left|Ke^{A\hat{D}}B\right|^2+4\hat{D}^2\left|Ke^{A\hat{D}}A\hat{D}B\right|^2.
\end{eqnarray}
Using (\ref{bvv}), (\ref{bbb1})--(\ref{bbnn})  the lemma is proved with $M_{5,\rm L}=\frac{1}{ M_{3,\rm L}}\left(2e^{b} + N_3+N_4+1\right)$, $M_{6,\rm L}= \frac{3+3N_1 +N_2}{M_{4,\rm L}}$.


\begin{thebibliography}{70}
\bibitem{angeli}
D. Angeli, E. D. Sontag. Forward completeness, unboundedness observability, and their Lyapunov characterizations. {\it Systems and Control Letters}, 38:209--217, 1999.
\bibitem{artstein}
Z. Artstein. Linear systems with delayed controls: A reduction. {\it IEEE Trans. Autom. Control}, 27:869--879, 1982.
\bibitem{antsaklis}
J. Baillieul and P. Antsaklis. Control and communication challenges in networked real-time systems. {\it Proceedings of the IEEE}, 95:9--28, 2007.
\bibitem{krsticbek}
N. Bekiaris-Liberis, M. Krstic. Compensation of time-varying input and state delays for nonlinear systems. {\it J. Dynamic Systems, Measurement, and Control}, 134:011009, 2012. 
\bibitem{bek}
N. Bekiaris-Liberis and M. Krstic. Compensation of state-dependent input delay for nonlinear systems. {\it IEEE Conference on Decision and Control}, 2011.
\bibitem{bekiariadaptive}
N. Bekiaris-Liberis and M. Krstic. Delay-adaptive feedback for linear feedforward systems. {\it Systems and Control Letters}, 59:277--283, 2010.
\bibitem{bekiaris}
N. Bekiaris-Liberis and M. Krstic. Stabilization of strict-feedback linear systems with delayed integrators. {\it Automatica}, 46:1902--1910, 2010.
\bibitem{delphinenew1}
D. Bresch-Pietri, J. Chauvin and N. Petit. Output feedback control of time-delay systems with adaptation of delay estimate. {\it IFAC World Congress}, 2011.
\bibitem{delphine}
D. Bresch-Pietri and M. Krstic. Delay-adaptive predictor feedback for systems with unknown long actuator delay. {\it IEEE Transactions on Automatic Control}, 55:2106--2112, 2010.
\bibitem{delphine2}
D. Bresch-Pietri, Miroslav Krstic. Adaptive trajectory tracking despite unknown input delay and plant parameters. {\it Automatica}, 45:2074--2081, 2009.
\bibitem{dcf}
H. C. Cho and M. S. Fadali. Nonlinear network-induced time delay systems with stochastic learning. {\it IEEE Transactions on Control Systems Technology}, 19:843--851, 2011.
\bibitem{chopra}
N. Chopra, M. W. Spong, S. Hirche, M. Buss. Bilateral teleoperation over the internet: the time varying delay problem. {\it American Control Conference}, 2003.  
\bibitem{teeln}
 W. P. M. H. Heemels, A. R. Teel, N. van de Wouw, D. Nesic. Networked control systems with communication constraints: Tradeoffs between transmission intervals, delays and performance. {\it IEEE Trans. Autom. Control}, 55:1781--1796, 2010.
\bibitem{hespanhasurvey}
J. P. Hespanha, P. Naghshtabrizi and Y. Xu. A survey of recent results in networked control systems. {\it Proceedings of the IEEE}, 95:138--162, 2007.
\bibitem{hoyakem1}
P. F. Hokayem, D. M. Stipanovic and M. W. Spong. Suboptimal master-slave teleoperation control with delays. {\it American Control Conference}, 2006.
\bibitem{jankovic raz}
M. Jankovic. Control Lyapunov-Razumikhin functions and robust stabilization of time delay systems. {\it IEEE Transactions on Automatic Control}, 46:1048--1060, 2001. 
\bibitem{jankovic forwarding linear}
M. Jankovic. Forwarding, backstepping, and finite spectrum assignment for time delay systems. {\it Automatica}, 45:2--9, 2009.
\bibitem{jankovic nonlinear2}
M. Jankovic. Cross-Term Forwarding for Systems With Time Delay. {\it IEEE Trans. Aut. Cont.}, 54:498--511, 2009.
\bibitem{jankovic recursive}
M. Jankovic. Recursive predictor design for state and output feedback controllers for linear time delay systems. {\it Automatica}, 46:510--517, 2010.
\bibitem{karafyllis finite}
I. Karafyllis. Finite-time global stabilization by means of time-varying distributed delay feedback. {\it Siam J. Control and Optim}, 45:320--342, 2006.
\bibitem{karafyllisretarded}
I. Karafyllis. Lyapunov theorems for systems described by retarded functional differential equations. {\it Nonlinear Analysis}, 64:590--617, 2006.
\bibitem{karnew1}
I. Karafyllis, P. Pepe and Z.-P. Jiang. Global output stability for systems described by retarded functional differential equations: Lyapunov characterizations. {\it European Journal of Control}, 14:516-536, 2008.
\bibitem{karnew2}
I. Karafyllis, P. Pepe and Z.-P. Jiang. Input-to-output stability for systems described by retarded functional differential equations. {\it European Journal of Control}, 14:539-555, 2008.
\bibitem{karpre}
I. Karafyllis. Stabilization by means of approximate predictors for systems with delayed input. {\it Siam J. Control and Optimization}, 49:1100--1123, 2010.
\bibitem{KARAFYSAMPL}
I. Karafyllis and M. Krstic. Nonlinear stabilization under sampled and delayed measurements, and with inputs subject to delay and zero-order hold. {\it IEEE Transactions on Automatic Control}m in press, 2012. 
\bibitem{khalil}
H. Khalil, {\it Nonlinear Systems}, 3rd Edition, Prentice Hall, 2002.
\bibitem{cravaris}
C. Kravaris and R. A. Wright. Deadtime compensation for nonlinear processes. {\it AIChE J.}, 35:1535--1542, 1989.
\bibitem{KKK}
M. Krstic, I. Kanellakopoulos and P. Kokotovic, {\it Nonlinear and Adaptive Control Design}, Wiley, 1995.
\bibitem{krstic tools}
M. Krstic. Lyapunov tools for predictor feedbacks for delay systems: Inverse optimality and robustness to delay mismatch. {\it Automatica}, 44:2930--2935, 2008.
\bibitem{krstic varying}
M. Krstic. Lyapunov stability of linear predictor feedback for time-varying input delay. {\it IEEE Trans. Autom. Control}, 55:554--559, 2010.
\bibitem{krstic feed}
M. Krstic. Input delay compensation for forward complete and feedforward nonlinear systems. {\it IEEE Transactions on Automatic Control}, 55:287--303, 2010.
\bibitem{basar}
S. Liu, T. Basar and R. Srikant. Pitfalls in the fluid modeling of RTT variations in window-based congestion control. {\it Proceedings of the IEEE INFOCOM}, 2005. 
\bibitem{mazenciss}
F. Mazenc, M. Malisoff, and Z. Lin. Further results on input-to-state stability for nonlinear systems with delayed feedbacks. {\it Automatica}, 44:2415--2421, 2008.
\bibitem{mazencfeed}
F. Mazenc, S. Mondie, and R. Francisco. Global asymptotic stabilization of feedforward systems with delay at the input. {\it IEEE Trans. Autom. Control}, 49:844--850, 2004.
\bibitem{mazmal}
F. Mazenc and M. Malisoff. Stabilization of a chemostat model with Haldane growth functions and a delay in the measurements. {\it Automatica}, 46:1428--1436, 2010.
\bibitem{mazpra}
F. Mazenc and S.-I. Niculescu. Generating positive and stable solutions through delayed state feedback. {\it Automatica}, 47:525--533, 2011. 
\bibitem{mazencnew}
F. Mazenc, S.-I. Niculescu, M. Bekaik. Backstepping for nonlinear systems with delay in the input revisited. {\it SIAM Journal on Control and Optimization}, 49:2263--2278, 2011.
\bibitem{michiels2}
W. Michiels and S.-I. Niculescu. {\it Stability and Stabilization of Time-Delay Systems: An Eignevalue-Based Approach}. SIAM, 2007.
\bibitem{niculescubook}
S.-I. Niculescu. {\it Delay Effects on Stability}. Springer, 2001. 
\bibitem{nihtila1}
M. Nihtila, Adaptive control of a continuous-time system with time-varying input delay. {\it Syst. Control Lett.}, 12:357--364, 1989. 
\bibitem{nihtila2}
M. Nihtila. Finite pole assignment for systems with time-varying input delays. {\it in Proc. IEEE Conf. Dec. Control}, 1991.
\bibitem{pepe1}
P. Pepe and Z.-P. Jiang. A Lyapunov-Krasovskii methodology for ISS and iISS of time-delay systems. {\it Systems \& Control Letters}, 55:1006--1014, 2006.
\bibitem{richard}
J-P. Richard. Time-delay systems: an overview of some recent advances and open problems. {\it Automatica}, 39:1667--1694, 2003.
\bibitem{slemon}
G. R. Slemon and A. Straughen. {\it Electric Machines}. Addison-Wesley, Reading, MA, 1980.
\bibitem{teel1}
A. R. Teel. Connections between Razumikhin-type theorems and the ISS nonlinear small gain theorem. {\it IEEE Transactions on Automatic Control}, 43:960--964, 1998.
\bibitem{witrant}
E. Witrant, C. C. de-Wit, D. Georges and M. Alamir. Remote stabilization via communication networks with a distributed control law. {\it IEEE Transactions on Automatic Control}, 52:1480--1485, 2007.
\bibitem{yildiz}
Y. Yildiray, A. Annaswamy, I. V. Kolmanovsky, D. Yanakiev. Adaptive posicast controller for time-delay systems with relative degree $n^* \leq 2$. {\it Automatica}, 46:279--289, 2010.
\bibitem{zhong1}
Q.-C. Zhong and L. Mirkin. Control of integral processes with dead time---Part 2: Quantitative analysis. {\it Proc. Inst. Elect. Eng.}, 149:291--296, 2002.
\bibitem{zhong2}
Q.-C. Zhong. On distributed delay in linear control laws---Part I: Discrete-delay implementation. {\it IEEE Transactions on Automatic Control}, 49:2074--2080, 2004. 
\bibitem{zhong3}
Q.-C. Zhong. {\it Robust Control of Time-Delay Systems}. Springer, 2006. 
\end{thebibliography}
\end{document}